\newtheorem{theorem}{Theorem}[section]
\newtheorem{lemma}[theorem]{Lemma}
\newtheorem{proposition}[theorem]{Proposition}
\newtheorem{assumption}[theorem]{Assumption}
\theoremstyle{definition}
\newtheorem{definition}[theorem]{Definition}
\newtheorem{example}[theorem]{Example}
\newtheorem{remark}[theorem]{Remark}
\def\Cc{\mathbb{C}}
\renewcommand{\d}{\mathrm{d}}
\renewcommand{\H}{\mathrm{H}}
\renewcommand{\L}{\mathrm{L}}
\newcommand{\W}{\mathrm{W}}
\newcommand{\Acl}{A_{cl}}
\newcommand{\phicoeff}{\gamma}
\newcommand{\re}{\mathrm{Re}\,}
\newcommand{\Pt}[1][t]{\mathbf{P}_{#1}}
\newcommand{\F}{\mathbb{F}}
\newcommand{\diag}{\mathrm{diag}}
\newcommand{\rank}{\mathrm{rank}}
\newcommand{\doo}{\partial}
\newcommand{\WBi}{\tilde W_{B,1}}
\newcommand{\WBh}{\tilde W_{B,2}}
\newcommand{\WCo}{\tilde W_C}
\newcommand{\AB}{A\&B}
\newcommand{\CD}{C\&D}
\newcommand{\XLP}{X_{LP}}
\newcommand{\TLP}{\mc{T}}
\newcommand{\DLP}{D_{LP}}
\newcommand*{\setm}[2]{\{\,#1\mid#2\,\}}   
\newcommand{\zinf}[1][0]{[#1,\infty)}
\newcommand{\zabr}[2]{(#1,#2]}
\newcommand{\zabl}[2]{[#1,#2)}
\newcommand{\Lploc}[1][p]{\L_{\textup{loc}}^{#1}}
\newcommand{\Lp}[1][p]{\L^{#1}}
\newcommand{\Hloc}[1][n]{\H_{\textup{loc}}^{#1}}
\newcommand{\Wloc}[1][n]{\W_{\textup{loc}}^{#1,\infty}}
\newcommand{\R}{\mathbb{R}}
\newcommand{\C}{\mathbb{C}}
\newcommand{\N}{\mathbb{N}}
\newcommand{\T}{\mathbb{T}}
\newcommand{\gs}{\sigma}
\newcommand{\Lin}{\mathcal{L}}
\newcommand{\citel}[2]{\cite[#2]{#1}}
\newcommand{\eq}[1]{\begin{align*}#1 \end{align*}}
\newcommand{\eqn}[1]{\begin{align}#1 \end{align}}
\newcommand{\pmat}[1]{\begin{bmatrix}#1 \end{bmatrix}}
\newcommand{\pmatsmall}[1]{\begin{bsmallmatrix}#1 \end{bsmallmatrix}}
\newcommand{\iprod}[2]{\langle #1,#2\rangle}
\newcommand*{\Iprod}[3][default]{\ifthenelse{\equal{#1}{default}}{\left\langle#2,#3\right\rangle}{\ldelim{#1}{\langle}#2,#3\rdelim{#1}{\rangle}}}
\newcommand{\norm}[1]{\| #1\|}
\newcommand{\Norm}[1]{\left\| #1\right\|}
\newcommand{\Dom}{D}
\newcommand{\mc}[1]{\mathcal{#1}}
\newcommand{\ran}{\textup{Ran}}
\renewcommand{\ker}{\textup{Ker}}
\newcommand{\gl}{\lambda}
\newcommand{\inv}{^{-1}}
\newcommand{\gd}{\delta}
\newcommand{\eps}{\varepsilon}
\newcommand{\abs}[1]{| #1|}
\newcommand{\Dcomp}{D_{\textup{comp}}}
\title[Well-Posedness and Stability Under Monotone Feedback]{Well-Posedness and Stability of Infinite-Dimensional Systems Under Monotone Feedback}
\begin{document}

\author[A.\ Hastir]{Anthony Hastir}
\address[A.\ Hastir]{School of Mathematics and Natural Sciences, University of Wuppertal, 42119, Wuppertal, Germany}
\email{hastir@uni-wuppertal.de}

\author[L.~Paunonen]{Lassi Paunonen}
\address[L.~Paunonen]{Mathematics Research Centre, Tampere University, P.O.~ Box 692, 33101 Tampere, Finland}
 \email{lassi.paunonen@tuni.fi}

\subjclass[2020]{%
34G20, 
47H20, 
93B52, 
93D20 
(47H06, 
35B35, 
35K05
)%
}

\keywords{Well-posed system, nonlinear feedback, impedance passive, global asymptotic stability, Lur'e system, nonlinear contraction semigroup, boundary control system, port-Hamiltonian system, heat equation}


\begin{abstract}
We study the well-posedness and stability of an impedance passive infinite-dimensional linear system under nonlinear feedback
 of the form
 $u(t)=\phi(v(t)-y(t))$, where $\phi$ is a monotone function.
Our first main result introduces conditions guaranteeing the existence of classical and generalised solutions in a situation where the original linear system is well-posed.
In the absence of the external input $v$
 we establish the existence of strong and generalised solutions 
under strictly weaker conditions.
Finally, 
we introduce conditions guaranteeing that the origin is a globally asymptotically stable equilibrium point of the closed-loop system.
Motivated by the analysis of partial differential equations with nonlinear boundary conditions,
we use our results to investigate
 the well-posedness and stability of abstract boundary control systems, 
 port-Hamiltonian systems, a Timoshenko beam model, and a two-dimensional boundary controlled heat equation.
\end{abstract}

\maketitle

\section{Introduction}

In this article we study the well-posedness and stability of abstract infinite-dimen\-sion\-al linear systems under nonlinear output feedback. 
These topics
 have been investigated actively in the literature both in the framework of abstract systems~\cite{Sle89,LasSei03,Ber12,SinWei23}, 
 as well as in the case of controlled partial differential equations~\cite{Van98,JolLau20}.
The situation where the original system is linear and the feedback is nonlinear leads to the class of infinite-dimensional \emph{Lur'e systems}~\cite{LogRya00,GraCal11,GuiLog19}.
The existing literature especially demonstrates that 
several classes of linear models with \emph{passivity} properties lead to well-behaving closed-loop systems under feedback with suitable directional properties. 
We focus on exactly this case.

More precisely, we consider the existence of solutions and stability of impedance passive linear systems under monotone output feedback. As our main results we introduce mild conditions 
under which the closed-loop system has well-defined generalised and classical solutions. Moreover, 
we introduce conditions for
the global asymptotic stability of the equilibrium point at the origin for the closed-loop system.
Our results especially generalise the well-posedness results in~\cite{GraCal11,HasCal19,Aug19} and the stability results in~\cite{Sle89,Ber12,CurZwa16}
and they can, in particular, be used to analyse
 the existence and asymptotic behaviour of solutions of
 partial differential equations with nonlinear feedback or boundary damping.

To introduce our main results, let $\Sigma=(\T,\Phi,\Psi,\F)$ be a \emph{well-posed linear system}~\cite{Sta05book,TucWei14} on a Hilbert space $X$. 
The system $\Sigma$ is called \emph{impedance passive} if its input and output spaces are equal, i.e., $Y=U$, and if for every initial state $x_0$ and input $u$ its 
state trajectory $x$ and output $y$ satisfy (see Section~\ref{sec:Prem_Sys} for details)
\eq{
\Vert x(t)\Vert_X^2 - \Vert x_0\Vert_X^2
\leq 2\re\int_0^t\langle u(s),y(s)\rangle_U\d s, \qquad t\geq 0.
}

For an impedance passive well-posed linear system $\Sigma$
we consider output feedback of the form $u(t)=\phi(v(t)-y(t))$, see Figure~\ref{fig:Diag_CL}.
 Here the function $\phi$ describes the nonlinearity of the feedback and $v$ is the new input of the system.
Applying this output feedback leads formally to the system of equations
\begin{subequations}
\label{eq:IntroLPSemigStateandOutput}
\eqn{
\label{eq:IntroLPSemigState}
x(t) &= \mathbb{T}_t x_{0} +  \Phi_t \Pt[t]\phi(v- y), \qquad t\geq 0,\\
\label{eq:IntroLPSemigOutput}
 y &= \Psi_\infty x_{0}  + \mathbb{F}_\infty \phi(v-y).
}
\end{subequations}
However,
 the existence of the closed-loop state trajectory $x$ and output $y$ satisfying~\eqref{eq:IntroLPSemigStateandOutput} depends on the properties of the system $\Sigma$ and the function $\phi$.

\begin{figure}[h!]
    \centering
    \includegraphics[width=0.5\linewidth]{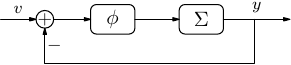}
    \caption{The closed-loop system resulting from the nonlinear feedback $u(t) = \phi(v(t)-y(t)).$}
    \label{fig:Diag_CL}
\end{figure}

In the case of impedance passive systems
the most natural class of
feedback nonlinearities $\phi$
consists of functions
which are \emph{monotone} in the sense that
 $\re\iprod{\phi(u_2)-\phi(u_1)}{u_2-u_1}\geq 0$ for $u_1,u_2\in U$.
Our first result utilises the strictly stronger condition~\eqref{eq:IntroPhiAss} to guarantee the existence of solutions to~\eqref{eq:IntroLPSemigStateandOutput}. 
The estimate for the differences of solutions in the claim shows 
that if $\phicoeff=1$, then the closed-loop system equipped with the output $\phi(v-y)$ is \emph{incrementally scattering passive} in the sense of~\cite{SinWei22}. Moreover, in the absence of the external input the closed-loop system is contractive.

\begin{theorem}
\label{thm:IntroWP}
Suppose that $\Sigma = (\T,\Phi,\Psi,\F)$ is an impedance passive well-posed linear system whose transfer function $P$ satisfies $P(\gl)+P(\gl)^\ast\ge c_\gl I$ for some $\gl,c_\gl>0$.
Moreover, assume that
\eqn{
\label{eq:IntroPhiAss}
\re \iprod{\phi(u_2)-\phi(u_1)}{u_2-u_1}_U\geq \phicoeff\norm{\phi(u_2)-\phi(u_1)}_U^2, \qquad u_1,u_2\in U
}
for some constant $\phicoeff>0$.
Then for any $x_0\in X$ and $u\in \Lploc[2](0,\infty;U)$
the equations~\eqref{eq:IntroLPSemigStateandOutput} have a unique solution
$x\in C(\zinf;X)$ and $y\in \Lploc[2](0,\infty;U)$.
Moreover, if $(x_1,y_1)$ and $(x_2,y_2)$
are two pairs of solutions of~\eqref{eq:IntroLPSemigStateandOutput} corresponding 
to the initial states $x_{01}\in X$ and $x_{02}\in X$
and the inputs 
$v_1\in \Lploc[2](0,\infty;U)$
and
$v_2\in \Lploc[2](0,\infty;U)$, respectively, then
for all $t\geq 0$
\eq{
\MoveEqLeft\norm{x_2(t)-x_1(t)}_X^2 + \phicoeff\int_0^t \norm{\phi(v_2(s)-y_2(s))-\phi(v_1(s)-y_1(s))}_{U}^2\d s\\
&\leq \norm{x_2(0)-x_1(0)}_X^2+\frac{1}{\phicoeff}\int_0^t \norm{v_2(s)-v_1(s)}_{U}^2\d s.
}
\end{theorem}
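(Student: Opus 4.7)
My plan is to reduce~\eqref{eq:IntroLPSemigStateandOutput}, on every finite horizon $[0,\tau]$, to a monotone inclusion in $\mc{H}_\tau:=\Lp[2](0,\tau;U)$, solve it by Hilbert-space monotone operator theory, and then read off the difference estimate directly from the impedance passivity of the underlying linear $\Sigma$. First, the cocoercivity~\eqref{eq:IntroPhiAss} combined with Cauchy--Schwarz forces $\phi$ to be globally $\kappa\inv$-Lipschitz, so the associated Nemytskii operator preserves $\Lploc[2]$. Fixing $\tau>0$, I would denote by $G_\tau\in\Lin(\mc{H}_\tau)$ the bounded linear map sending $u\in\mc{H}_\tau$ to $(\F_\infty u)|_{[0,\tau]}$ from zero initial state, and set $y_\ast:=(\Psi_\infty x_0)|_{[0,\tau]}$. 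Substituting $u=\phi(v-y)$ and $y=y_\ast+G_\tau u$ turns the closed-loop problem on $[0,\tau]$ into the inclusion
\eq{ \phi\inv(u)+G_\tau u \ni v-y_\ast \quad \text{in } \mc{H}_\tau, }
with $\phi\inv$ the pointwise maximal monotone operator. Impedance passivity of $\Sigma$ at zero initial state yields $\re\iprod{u}{G_\tau u}_{\mc{H}_\tau}\geq 0$, so $G_\tau$ is a bounded linear monotone operator, and rewritten in terms of $\phi\inv$ the condition~\eqref{eq:IntroPhiAss} is precisely strong monotonicity of $\phi\inv$ with modulus $\kappa$. Hence $\phi\inv+G_\tau$ is maximal monotone (Rockafellar--Brezis sum rule, $G_\tau$ being everywhere defined and continuous) and $\kappa$-strongly monotone, and therefore a bijection of $\mc{H}_\tau$. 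The unique resulting $u$ gives $y:=y_\ast+G_\tau u$ and $x(t):=\mathbb{T}_t x_0+\Phi_t\Pt u\in C([0,\tau];X)$; causality of $G_\tau$ together with uniqueness on each horizon concatenates these into a global pair $(x,y)\in C(\zminf;X)\times\Lploc[2](0,\infty;U)$.

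For the difference estimate, given two solution pairs with data $(x_{0i},v_i)$ and the induced controls $u_i:=\phi(v_i-y_i)$, the pair $(x_2-x_1,y_2-y_1)$ is itself a $\Sigma$-trajectory, from initial state $x_{02}-x_{01}$ and input $u_2-u_1$. Impedance passivity of $\Sigma$ therefore gives
\eq{ \norm{x_2(t)-x_1(t)}_X^2-\norm{x_{02}-x_{01}}_X^2\leq 2\re\int_0^t \iprod{u_2-u_1}{y_2-y_1}_U \d s. }
Decomposing $y_i=v_i-(v_i-y_i)$, using cocoercivity to bound $\re\iprod{u_2-u_1}{(v_2-y_2)-(v_1-y_1)}_U\geq\kappa\norm{u_2-u_1}_U^2$, and absorbing the remaining cross term by Young's inequality $2\re\iprod{u_2-u_1}{v_2-v_1}_U\leq\kappa\norm{u_2-u_1}_U^2+\kappa\inv\norm{v_2-v_1}_U^2$ reduces the right-hand side to $-\kappa\norm{u_2-u_1}_U^2+\kappa\inv\norm{v_2-v_1}_U^2$; integrating and recalling $u_i=\phi(v_i-y_i)$ produces the stated inequality, and uniqueness is obtained as the special case $x_{01}=x_{02}$, $v_1=v_2$.

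The main obstacle will be the sum-rule/surjectivity step for $\phi\inv+G_\tau$: careful work is required to lift $\phi\inv$ pointwise from $U$ to $\mc{H}_\tau$ with maximality preserved and to place the bounded linear $G_\tau$ within the scope of the Brezis perturbation theorem. The hypothesis $P(\gl)+P(\gl)^\ast\geq c_\gl I$ does not play an obvious role in this monotone-operator argument and presumably enters only through the paper's background well-posed-system framework, perhaps to ensure that the constructed $(x,y)$ is a \emph{bona fide} Staffans--Weiss closed-loop trajectory rather than merely an $\Lp[2]$-level solution.
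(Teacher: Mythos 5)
Your argument is correct for the statement as given, but it follows a genuinely different route from the paper. The paper proves Theorem~\ref{thm:IntroWP} via Theorem~\ref{thm:LPSolutionsMain}: after rescaling to $\kappa=1$, it builds the Lax--Phillips operator $\mc A$ of Definition~\ref{def:LPgen} on $\L^2(-\infty,0;U)\times X\times\L^2(0,\infty;U)$, proves that $\mc A$ is single-valued, densely defined and $m$-dissipative (Proposition~\ref{prp:LPmaxDissipative}), generates a nonlinear contraction semigroup by Miyadera's theorem, and reads off first classical and then, by density, generalised solutions together with the estimate~\eqref{eq:ContractivityEstim}; the incremental inequality comes from the same algebraic identity as your Lemma-\ref{lem:PhiProps}(b)-type computation, applied at the system-node level. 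You instead solve, on each horizon $[0,\tau]$, the static inclusion $\phi\inv(u)+G_\tau u\ni v-y_\ast$ in $\L^2(0,\tau;U)$: the lifted $\phi\inv$ is maximal monotone (as the inverse of the everywhere-defined continuous monotone Nemytskii operator of $\phi$) and $\kappa$-strongly monotone by~\eqref{eq:IntroPhiAss}, $G_\tau$ is monotone by impedance passivity from the zero state, and the Rockafellar sum rule plus coercivity give a unique solution; causality concatenates the horizons. Your derivation of the difference estimate — passivity of the linear difference trajectory, cocoercivity, and Young's inequality — is a correct integrated version of the paper's pointwise computation and needs no further work. What each approach buys: yours is shorter and stays entirely at the input--output level, and it makes a genuine observation the paper does not — the hypothesis $P(\gl)+P(\gl)^\ast\ge c_\gl I$ appears to be dispensable for the mild-solution claim, since in the paper it enters only through Lemma~\ref{lem:PhiProps}(a) when solving the \emph{pointwise} resolvent equation $v=CR_\gl x_1+P(\gl)\phi(\hat u_1(\gl)-v)$ for the range condition of $\mc A$ and when recovering output regularity, whereas in your scheme surjectivity is driven by the strong monotonicity of $\phi\inv$ alone. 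The paper's heavier machinery buys correspondingly more: classical solutions for compatible data $(x_0,u(0))^\top\in\Dcomp$, the identification of generalised solutions as limits of classical ones (Definition~\ref{def:SysNodeStates}), and the nonlinear semigroup structure that is reused in Section~\ref{sec:WPnoinputs} and in the stability analysis. If you wanted to recover the full Theorem~\ref{thm:LPSolutionsMain} rather than just Theorem~\ref{thm:IntroWP}, you would still need an argument for the classical-solution and approximation statements, which is where the Lax--Phillips construction earns its keep.
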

It should be noted that condition~\eqref{eq:IntroPhiAss} implies that the function $\phi:U\to U$ is globally Lipschitz continuous. The full version of the result is presented in Theorem~\ref{thm:LPSolutionsMain}. Theorem~\ref{thm:LPSolutionsMain} also shows that for initial conditions $x_0\in X$ and inputs $v\in \Hloc[1](0,\infty;U)$ satisfying a natural compatibility condition
 the state trajectory $x$ and output $y$ of the equation~\eqref{eq:IntroLPSemigStateandOutput} 
have additional smoothness properties and can be interpreted as classical state trajectory and output of the closed-loop system.

Theorem~\ref{thm:IntroWP} can also be applied in the case $v\equiv 0$ to guarantee the existence of solutions of the closed-loop system without an external input.
However,
in this case the existence of solutions can also be obtained under strictly weaker conditions. 
As our second main result in Theorem~\ref{thm:nonlinACPsol} we will show that the closed-loop system arising from feedback $u(t)=\phi(-y(t))$ 
has well-defined strong and generalised solutions provided that 
$\phi:U\to U$ is a continuous monotone function satisfying 
$\re \iprod{\phi(u_2)-\phi(u_1)}{u_2-u_1}> 0$ whenever $\phi(u_1)\neq \phi(u_2)$.
This result generalises the well-posedness results in~\cite{GraCal11,Tro14,Aug19}.
Our result in  Theorem~\ref{thm:nonlinACPsol} is also applicable in the case of systems which are impedance passive but not necessarily well-posed. This situation is especially encountered in the study of multi-dimensional wave equations with collocated boundary inputs and outputs.

In the second main part of the article we introduce conditions guaranteeing the stability of an impedance passive system under the feedback $u(t)=\phi(-y(t))$.
The state trajectories and outputs of a well-posed linear system $\Sigma$ can alternatively be described using the associated \emph{system node}~\citel{TucWei14}{Sec.~4}.
In this setting our feedback
 leads formally to a system of equations 
\eqn{
\label{eq:SysNodeSysIntro}
\pmat{\dot x(t)\\ y(t)}
= \pmat{A\&B \\ C\&D} \pmat{x(t)\\ \phi(-y(t))}, \qquad t\geq 0.
}
Our third main result below introduces conditions under which
the origin is a globally asymptotically stable equilibrium point of~\eqref{eq:SysNodeSysIntro}.
The main assumptions are 
that
the original linear system
 becomes asymptotically stable under \emph{linear} negative feedback $u(t)=-y(t)$ and 
that there exist 
$\alpha, \beta, \delta>0$ 
such that 
\eqn{
\label{eq:IntroPhiNonvanish}
\begin{cases}
 \re\langle \phi(u),u\rangle\geq \alpha\Vert u\Vert^2 &\mbox{when } \Vert u\Vert<\delta, \mbox{ and}\\
  \re\langle \phi(u),u\rangle\geq \beta & \mbox{when } \Vert u\Vert\geq \delta.
\end{cases}
}
 The result
does not require the original linear system to be well-posed, and it
 also guarantees that for all initial states $x(0)=x_0\in X$ the 
 system~\eqref{eq:SysNodeSysIntro}
 has a \emph{generalised solution} in the sense defined in Section~\ref{sec:WPnoinputs}.

\begin{theorem}
\label{thm:StabSecondIntro}
Suppose that the system node in~\eqref{eq:SysNodeSysIntro}
is impedance passive,
its transfer function $P$ satisfies $P(\gl)+P(\gl)^\ast\geq c_\gl I$ for some $c_\gl,\gl>0$, and
 $\ker(C)$ is dense in $X$.
 Let $\phi:U\to U$ be a locally Lipschitz continuous monotone function such that
$\phi(0)=0$,
 $\re \iprod{\phi(u_2)-\phi(u_1)}{u_2-u_1}> 0$ whenever $\phi(u_1)\neq \phi(u_2)$, 
 and there exist
$\alpha, \beta, \delta>0$ 
such that~\eqref{eq:IntroPhiNonvanish} hold.
Assume further that the semigroup associated with the original system under linear negative feedback $u(t)=-y(t)$ is strongly stable.
Then 
the origin is a globally asymptotically stable equilibrium point of~\eqref{eq:SysNodeSysIntro}, i.e., for every initial state $x_0\in X$ the system~\eqref{eq:SysNodeSysIntro} has a well-defined generalised solution $x\in C(\zinf;X)$ and 
$\norm{x(t)}\to 0$ as $t\to\infty$.
\end{theorem}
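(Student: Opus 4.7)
The plan is to combine contractivity of the nonlinear closed-loop semigroup (which comes from impedance passivity and monotonicity of $\phi$) with a LaSalle-type invariance argument in which the strong stability of $\Sigma^K$ is exploited to shrink the $\omega$-limit set to $\{0\}$. Existence is immediate from Theorem~\ref{thm:nonlinACPsol}, since the hypotheses on $\phi$ are exactly those required there; write $S(t):x_0\mapsto x(t)$ for the resulting nonlinear semigroup on $X$. Impedance passivity together with $\phi(0)=0$ and monotonicity yields, for classical solutions and then by approximation for generalised solutions, the energy inequality
\eq{
\Vert x(t)\Vert_X^2 + 2\int_0^t \re\iprod{\phi(-y(s))}{-y(s)}_U\,\d s \leq \Vert x_0\Vert_X^2, \qquad t\geq 0,
}
so $\|x(t)\|$ is nonincreasing, $S$ is contractive, and $\int_0^\infty \re\iprod{\phi(-y)}{-y}\,\d s<\infty$. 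A direct consequence of~\eqref{eq:IntroPhiNonvanish} together with $\phi(0)=0$ is that $\re\iprod{\phi(u)}{u}=0$ if and only if $u=0$.

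Assuming for a moment that the orbit $\{x(t):t\ge 0\}$ is precompact in $X$, the LaSalle principle applies: the $\omega$-limit set $\omega(x_0)$ is nonempty, $S$-invariant, attracts $x(t)$, and the standard argument for contractive semigroups (together with the existence of the limit $L=\lim_{t\to\infty}\|x(t)\|$) shows that $t\mapsto\|S(t)x^\ast\|$ is constant equal to $L=\|x^\ast\|$ for every $x^\ast\in\omega(x_0)$. Inserting this into the energy identity along $S(t)x^\ast$ forces $\re\iprod{\phi(-y^\ast(s))}{-y^\ast(s)}=0$ a.e., hence by the previous observation $y^\ast\equiv 0$ and $\phi(-y^\ast)\equiv 0$. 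Along such a trajectory both the nonlinear feedback $u=\phi(-y^\ast)$ and the linear negative feedback $u=-y^\ast$ vanish, so $S(t)x^\ast$ coincides with the $\Sigma^K$-trajectory $\T^K_t x^\ast$. Strong stability of $\T^K$ gives $\T^K_t x^\ast\to 0$, while norm constancy forces $\|\T^K_t x^\ast\|=\|x^\ast\|$ for all $t\ge 0$, so $x^\ast=0$; hence $\omega(x_0)=\{0\}$ and $x(t)\to 0$.

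The main obstacle is establishing precompactness of the orbit, which in infinite dimensions does not follow from boundedness and requires genuine use of the linear structure. A natural route is to rewrite the closed loop as $\Sigma^K$ driven by the input $v(s):=\phi(-y(s))+y(s)$, yielding the representation $x(t)=\T^K_t x_0+\Phi^K_t\Pt[t]v$. Near zero, local Lipschitzness of $\phi$ and the first case in~\eqref{eq:IntroPhiNonvanish} give $\|v(s)\|^2\le C\re\iprod{\phi(-y(s))}{-y(s)}$, providing $\L^2$-control of $v$ on $\{\|y\|<\delta\}$; the second case in~\eqref{eq:IntroPhiNonvanish} shows that the complementary set has finite measure but does not by itself bound $y$ on it, and this is where I expect the density of $\ker(C)$ to enter, via approximation by strong solutions whose outputs are continuous in time, in order to transfer the needed pointwise control on $y$ to the generalised level. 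Once $v\in\L^2(0,\infty;U)$ is secured, admissibility of $\Phi^K$ combined with strong stability of $\T^K$ implies $\Phi^K_t\Pt[t]v\to 0$, which yields both precompactness of the orbit and, in fact, the conclusion $x(t)\to 0$ directly; the LaSalle argument then serves primarily as the clean conceptual framework into which this perturbative estimate is inserted.
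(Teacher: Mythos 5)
Your final paragraph is, in substance, the paper's own argument: the closed loop is rewritten as $\Sigma^K$ driven by $v=y+\phi(-y)$, one shows $v\in\Lp[2](0,\infty;U)$ from the passivity inequality and the two sector conditions in~\eqref{eq:IntroPhiNonvanish}, and then $\Phi^K_t\Pt v\to 0$ follows from strong stability of $\T^K$ together with the uniform boundedness of $\Phi^K_t$ (\cref{Thm_Sta_ImpPass} and \cref{Lemma_Conv_InputMap}); this gives $x(t)\to 0$ directly. The entire LaSalle/precompactness apparatus of your second paragraph is therefore unnecessary and can be deleted — as you yourself observe, once $v\in\Lp[2]$ the conclusion is immediate, and precompactness of the orbit is never needed.

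The one step you leave open — bounding $y$ and $\phi(-y)$ on the finite-measure set $\Omega_1=\{s:\norm{y(s)}\geq\delta\}$ — is real, but the paper closes it differently from what you anticipate. It does \emph{not} try to obtain pointwise control of the output of a generalised solution. Instead it argues in two stages. First, for $x_0\in \Dom(A_\phi)$, \cref{thm:nonlinACPsol} already gives a strong solution whose output satisfies $y,\phi(-y)\in\Lp[\infty](0,\infty;U)$ and the representation~\eqref{eq:NLFBStateandOutput}; combined with $\mu(\Omega_1)\leq\frac{1}{2\gamma}\norm{x_0}^2<\infty$ and your estimate on $\Omega_2$, this yields $y,\phi(-y)\in\Lp[2](0,\infty;U)$ and hence $\norm{x(t)}\to 0$ for such initial states (\cref{Theorem_Main_Stabi}). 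Second, for arbitrary $x_0\in X$ one picks $z_0\in\Dom(A_\phi)$ with $\norm{x_0-z_0}<\eps/2$ — here the density of $\ker(C)$ enters, but only through \cref{lem:DomAphiDense}(a) to guarantee that $\Dom(A_\phi)$ is dense — and uses the contraction estimate~\eqref{eq:NLFBContractivity}, which you already derived in your first paragraph, to transfer the \emph{conclusion} $\norm{x_\eps(t)}\to 0$ to the generalised solution via a standard $\eps/2$ argument. So your instinct to "approximate by strong solutions" is the right one; the missing observation is that you should transfer the convergence to zero through contractivity rather than the $\Lp[2]$ bound on the output, and that the boundedness of the output of \emph{strong} solutions is exactly what \cref{thm:nonlinACPsol} supplies. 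With that substitution your proposal becomes a complete proof along the paper's lines.
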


This result is presented in Theorem~\ref{thm:StabSecond}. 
Our second stability result in \cref{prp:StabWellPosed}
can be used to study the convergence of individual orbits of
\eqref{eq:SysNodeSysIntro} and it
can also be combined with well-posedness results from the literature, e.g.~\citel{TucWei14}{Thm.~7.2}, to study stability under feedback $u(t)=\phi(-y(t))$ where $\phi$ is not monotone.

In the final two sections we study partial differential equations with nonlinear boundary conditions.
We first present versions of our main results for the class of abstract boundary control systems~\cite{Sal87a,MalSta06} in Section~\ref{sec:BCS}.
Subsequently, in Section~\ref{sec:Examples} we employ our results in establishing the well-posedness and stability of the class of infinite-dimensional port-Hamiltonian systems~\cite{JacobZwart}, a Timoshenko beam model, and a two-dimensional heat equation with nonlinear boundary feedback.

Both the well-posedness and stability of infinite-dimensional systems with nonlinearities have been studied actively in the literature, and these two aspects are often considered together.
Earlier studies have introduced results for
abstract classes of systems and for concrete partial differential equations, often focusing either on monotone feedback for systems with collocated inputs, or on nonlinear dampings.
The well-posedness and stability under nonlinear feedback of abstract linear systems 
 have been studied especially in~\cite{Sle89, CurOos01, 
AlaAmm11,
AlaPri17,
CurZwa16, MarAnd17, 
MarChi20,
JacSch20,
LaaTab21,
BinTab23}
 in the case of bounded input operators, and in
\cite{SeiLi01, LasSei03, 
 Ber12, 
Aug19,
GuiLog19,
SchZwa21} for systems with unbounded input operators.
Some studies also focus on well-posedness and stability of infinite-dimensional Lur'e systems with either monotone or non-monotone nonlinear feedbacks~\cite{LogRya00,GraCal06,GraCal11}.
In addition, the stability of wave equations with nonlinear boundary dampings has been studied separately, for instance, in~\cite{Har85, Van98, JolLau20, VanFerCDC20}.
The well-posedness of infinite-dimensional linear and nonlinear systems under nonlinear feedback have been studied especially in~\cite{TucWei14,NatBen16,HasCal19,SinWei22}.
In particular, in~\cite{SinWei22,Sin23phd,SinWei23} 
the authors introduced the class of \emph{well-posed nonlinear systems} and studied passivity properties in this framework.

The existing results which are closest to our stability results have
 been presented in~\cite{LasSei03,Ber12,CurZwa16}.  In particular, in \cite{LasSei03} the authors study a large class of abstract systems with nonlinear boundary damping and introduce conditions for stability based on a comparison with the stability properties of a linear semigroup.  In~\cite{Ber12} the authors present conditions for the well-posedness and asymptotic stability of abstract systems with possibly unbounded collocated input and output operators and a large class of monotone nonlinear feedbacks. In our stability results the class of systems is allowed to be slightly larger than in~\cite{Ber12}.
Our Theorem~\ref{thm:StabSecond} directly generalises the stability results in~\citel{Ber12}{Cor.~3.11} and~\citel{CurZwa16}{Thm.~2.2} which were stated for systems with collocated input and output operators and semigroup generators with compact resolvent. 
Our results on well-posedness differ from many of the existing results due to the fact that Theorem~\ref{thm:IntroWP} focuses on the situation where the external input acts \emph{through} the nonlinearity.
Very recently during the preparation of this manuscript, the well-posedness of well-posed \emph{nonlinear} systems under
this same kind of feedback 
 was investigated in~\cite{MarWei25preprint}. 
This reference 
establishes the existence of
generalised solutions of the closed-loop system
 under the same condition~\eqref{eq:IntroPhiAss} as in Theorem~\ref{thm:IntroWP}.
 In addition, the approaches taken in this article and the reference~\cite{MarWei25preprint} are both based on 
the analysis of an associated Lax--Phillips semigroup.

The paper is organized as follows. In Section~\ref{sec:Prem_Sys} we review the definitions of well-posed linear systems and system nodes.
In Section \ref{sec:WP_Nonlin} we present our results on well-posedness under nonlinear feedback. Our results on strong stabilization by nonlinear output feedback are presented in Section \ref{sec:StrongStab}.
In Section~\ref{sec:BCS} we present versions of our results for the class of abstract boundary control systems.
In Section~\ref{sec:Examples} we employ our results in analysing the well-posedness and stability of partial differential equations with nonlinear boundary conditions, namely, infinite-dimensional port-Hamiltonian systems, a Timoshenko beam model, and a two-dimensional heat equation.

\subsection*{Acknowledgments}
This research was supported by the Research Council of Finland grant 349002 and the German Research Foundation (DFG)
grant HA 10262/2-1. The authors would like to thank Nicolas Vanspranghe for his helpful advice on nonlinear semigroups. They also thank Hilla Paunonen for her help on the proof of Lemma~\ref{lem:PhiProps}.

\subsection*{Notation} If $X$ and $Y$ are Banach spaces and $A:\Dom(A)\subset X\rightarrow Y$ is a linear operator we denote by $\Dom(A)$, $\ker(A)$ and $\ran(A)$ the domain, kernel, and range of $A$, respectively. The space of bounded linear operators from $X$ to $Y$ is denoted by $\Lin(X,Y)$ and we write $\Lin(X)$ for $\Lin(X,X)$. If \mbox{$A:X\rightarrow X$,} then $\gs(A)$
and $\rho(A)$ denote the spectrum
and the \mbox{resolvent} set of $A$, respectively. 
The inner product on a Hilbert space is denoted by $\iprod{\cdot}{\cdot}$ and all our Hilbert spaces are assumed to be complex.
For $T\in \Lin(X)$ on a Hilbert space $X$ we define $\re T = \frac{1}{2}(T+T^\ast)$.
For $\tau>0$ and $u:\zinf\to U$  
we define $\Pt[\tau]u: [0,\tau]\to U$ as the truncation of the function $u$ to the interval $[0,\tau]$.
We denote $\Cc_+ = \{\gl\in \Cc \ | \ \re\gl>0\}$.

\section{Preliminaries}\label{sec:Prem_Sys}

Let $U, X$ and $Y$ be (complex) Hilbert spaces.
Throughout the paper we consider a well-posed linear system $\Sigma = (\Sigma_t)_{t\ge 0} = (\T_t,\Phi_t,\Psi_t,\F_t)_{t\ge0}$ defined in the sense of~\citel{TucWei14}{Def.~3.1}
with input space $U$,  state space $X$,  and output space $Y$.
We define the \emph{mild state trajectory} $x\in C(\zinf;X)$ and \emph{mild output} $y\in\Lploc[2](0,\infty;Y)$
corresponding to the initial state 
$x_0\in X$  and the input $u\in \Lploc[2](0,\infty;U)$  so that 
\begin{subequations}
\label{eq:PrelimMildStateOut}
\eqn{
x(t) &= \mathbb{T}_t x_0 + \Phi_t\Pt u,\label{Eq_State}\\ 
\mathbf{P}_t y &= \Psi_t x_0 + \mathbb{F}_t\Pt u,\label{Eq_Output} 
}
\end{subequations}
for all $t\geq 0$.
We denote the \emph{extended output map} and \emph{extended input-output map}~\citel{TucWei14}{Sec.~3} of $\Sigma$ by $\Psi_\infty:X\to \Lploc[2](0,\infty;Y)$ and $\F_\infty:\Lploc[2](0,\infty;U)\to \Lploc[2](0,\infty;Y)$, respectively.
Using these operators the output $y$ corresponding to the initial state $x_0$ and input $u$ can be equivalently expressed as $y=\Psi_\infty x_0 + \F_\infty u$.
We especially study \emph{impedance passive} well-posed linear systems which are defined in the following way.
\begin{definition}\label{def:Imp_Passive}
A well-posed system $\Sigma$ is called \emph{impedance passive} if $Y=U$ and
 if every mild state trajectory $x$ and mild output $y$ corresponding to 
an initial state $x_0\in X$ and an input $u\in \Lploc[2](0,\infty;U)$ satisfy
\eq{
\Vert x(t)\Vert_X^2 - \Vert x_0\Vert_X^2
\leq 2\re\int_0^t\langle u(s),y(s)\rangle_U\d s, \qquad t\geq 0.
}
\end{definition}

Recall that 
a strongly continuous semigroup $\T$ 
on $X$ is called \emph{strongly stable} if $\norm{\T_tx_0}\to 0$ as $t\to \infty$ for all $x_0\in X$.
In our stability analysis in Section~\ref{sec:StrongStab} we will utilise the following result on well-posed systems with strongly stable semigroups.
\begin{lemma}[{\citel{Sta05book}{Lem.~8.1.2(iii)}}]
\label{Lemma_Conv_InputMap}
Let $\Sigma = (\T,\Phi,\Psi,\F)$ be a well-posed linear system
on the Hilbert spaces $(U,X,Y)$. Assume that the semigroup $\T$ is strongly stable and that there exists $M>0$ such that $\norm{\Phi_t}\le M$ for all $t\ge 0$.
Then for every $u\in \Lp[2](0,\infty;U)$ we have $\norm{\Phi_t\Pt u}_X\to 0$ as $t\to\infty$.
\end{lemma}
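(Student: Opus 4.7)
The plan is to exploit the cocycle (time-invariance) property of the input map to split $\Phi_t\Pt u$ into a ``past'' contribution that is pushed through the semigroup $\T$, and a ``tail'' contribution that comes from the portion of $u$ living near infinity. Since $\T$ is strongly stable, the first piece will vanish as $t\to\infty$ with the split parameter held fixed; since $u\in\Lp[2](0,\infty;U)$, the second piece becomes arbitrarily small in $X$ by choosing the split point far enough out.

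Concretely, for any $\tau>0$ and $t\ge\tau$ I would invoke the standard cocycle identity for a well-posed system (see e.g.\ \citel{Sta05book}{Sec.~2.2}), which in the present notation reads
\eq{
\Phi_t\Pt u = \T_{t-\tau}\,\Phi_\tau \Pt[\tau] u + \Phi_{t-\tau}\Pt[t-\tau]\bigl(u(\cdot+\tau)\bigr).
}
Here the first summand records the state at time $t$ produced from the input on $[0,\tau]$ and then propagated freely by $\T$ for duration $t-\tau$, while the second summand is the state produced at time $t-\tau$ by the shifted input $s\mapsto u(s+\tau)$. Applying the triangle inequality and the uniform bound $\norm{\Phi_s}\le M$ gives
\eq{
\norm{\Phi_t\Pt u}_X \le \norm{\T_{t-\tau}\Phi_\tau \Pt[\tau] u}_X + M\,\Bigl(\int_\tau^\infty \norm{u(s)}_U^2\,\d s\Bigr)^{1/2}.
}

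Now the endgame is an $\varepsilon/2$ argument. Given $\varepsilon>0$, first choose $\tau=\tau(\varepsilon)$ so large that the $\Lp[2]$-tail of $u$ past $\tau$ is controlled by $\varepsilon/(2M)$; this is possible because $u\in\Lp[2](0,\infty;U)$. With this $\tau$ fixed, the element $x_\tau:=\Phi_\tau \Pt[\tau]u$ belongs to $X$, and strong stability of $\T$ gives $\norm{\T_{t-\tau}x_\tau}_X\to 0$ as $t\to\infty$. Hence there exists $T\ge\tau$ such that $\norm{\T_{t-\tau}x_\tau}_X<\varepsilon/2$ for all $t\ge T$, and combining the two estimates yields $\norm{\Phi_t\Pt u}_X<\varepsilon$ for all $t\ge T$, which is the claim.

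I do not expect a real obstacle here: the only non-routine point is making sure the cocycle identity is applied with the correct shift conventions consistent with the definition of $(\T,\Phi,\Psi,\F)$ used in~\citel{TucWei14}{Def.~3.1}, and that the uniform bound on $\norm{\Phi_s}$ survives after the shift (which it does, since well-posedness of $\Sigma$ already implies a uniform bound of $\Phi_s$ on any finite interval and the hypothesis upgrades this to a global bound). Everything else is the standard ``$\varepsilon/2$ on past plus tail'' splitting.
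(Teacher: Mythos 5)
Your proposal is correct. The paper does not prove this lemma itself --- it simply cites \citel{Sta05book}{Lem.~8.1.2(iii)} (see also \cite{OosCur98}) --- and your argument is exactly the standard proof behind that citation: the composition (cocycle) property of the input map splits $\Phi_t\Pt u$ into $\T_{t-\tau}\Phi_\tau\Pt[\tau]u$ plus a tail term bounded by $M\bigl(\int_\tau^\infty\norm{u(s)}_U^2\,\d s\bigr)^{1/2}$, and the $\varepsilon/2$ argument using strong stability of $\T$ and square-integrability of $u$ finishes the claim. No gaps.
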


In the following we recall the concept of a \emph{system node} which is closely related to well-posed linear systems. 
For a generator $A$ of a strongly continuous semigroup we denote by $X_{-1}$ the completion of the space $(X,\norm{(\gl_0-A)\inv \cdot}_X)$ where  $\gl_0\in\rho(A)$ is fixed. It is well-known that $A:\Dom(A)\subset X\to X$ extends to an operator in $\Lin(X,X_{-1})$~\citel{Sta05book}{Sec.~3.6}, and we will also denote this extension by $A$.

\begin{definition}[\textup{\citel{Sta05book}{Def.~4.7.2}}]
\label{def:SysNode} 
Let $X$, $U$, and $Y$ be Hilbert spaces.
A closed operator 
\eq{
S:=\pmat{A\&B\\ C\&D} : \Dom(S) \subset X\times U \to X\times Y
}
is called a \emph{system node} on the spaces $(U,X,Y)$ if it has the following properties.
\begin{itemize}
\item The operator $A: \Dom(A)\subset X\to X$ defined by $Ax =A\&B \pmatsmall{x\\0}$ for $x\in \Dom(A)=\{x\in X\ | \ (x,0)^\top\in \Dom(S)\}$ generates a strongly continuous semigroup on $X$.
\item The operator $A\& B$ (with domain $\Dom(S)$) can be extended to an operator $[A,\ B]\in \Lin(X\times U,X_{-1})$.
\item $\Dom(S) = \{(x,u)^\top \in X\times U\ | \ Ax+Bu\in X\}$.
\end{itemize}
\end{definition}

The operator $A$ is called the \emph{semigroup generator} of $S$.
Every well-posed linear system $\Sigma = (\T,\Phi,\Psi,\F)$ is associated with a unique system node $S$~\citel{TucWei14}{Sec.~4}.
In particular, if $x_0\in X$ and $u\in \Hloc[1](0,\infty;U)$ are such that $(x_0,u(0))^\top \in \Dom(S)$, then the mild state trajectory $x$ and output $y$ of $\Sigma$ defined by~\eqref{eq:PrelimMildStateOut} satisfy $x\in C^1(\zinf;X)$, $x(0)=x_0$ and $y\in \Hloc[1](0,\infty;Y)$ and
\eqn{
\label{eq:PrelimSysnodeEqn}
\pmat{\dot x(t)\\y(t)} = S \pmat{x(t)\\ u(t)}, \qquad t\geq 0.
}
Motivated by this, a system node $S$ is called \emph{well-posed} if it is the system node of some well-posed linear system $\Sigma$.

We note that the conditions in Definition~\ref{def:SysNode} imply that $C\& D\in \Lin(\Dom(S),Y)$. 
The \emph{transfer function} 
$P:\rho(A)\to \Lin(U,Y)$ of a system node $S$ on $(U,X,Y)$ is defined
by
\eq{
P(\gl)u = C\& D \pmat{(\gl-A)\inv Bu\\ u}, \qquad u\in U, \ \gl\in\rho(A).
}
In addition, the \emph{output operator} $C\in \Lin(\Dom(A),Y)$ of $S$ is defined by $Cx= C\& D \pmatsmall{x\\0}$ for all $x\in \Dom(A)$.
We will now define the \emph{impedance passivity} of a system node. In the literature, it is more customary to use a definition based on the solutions of the equation~\eqref{eq:PrelimSysnodeEqn}, but the definitions are equivalent by~\citel{Sta02}{Thm.~4.2}. Due to this same result, a well-posed system node $S$ is impedance passive if and only if its associated well-posed linear system $\Sigma$ is impedance passive.

\begin{definition}\label{def:ImpPass}
A system node $S$ on $(U,X,Y)$ is \emph{impedance passive} if $Y=U$ and 
\eq{
    \re \left\langle A\& B
        \pmat{x_0\\ u_0}
,x_0\right\rangle_X \leq \re \left\langle C\& D\pmat{x_0\\ u_0 },u_0\right\rangle_U
}
for all $(x_0, u_0)^\top\in \Dom(S)$.
\end{definition}

It follows from Definition~\ref{def:ImpPass}
that the transfer function of an impedance passive system node satisfies $\re P(\gl)\geq 0$ for all $\gl\in\Cc_+$. In addition, the transfer function of an impedance passive well-posed system node $S$ coincides with the transfer function of the associated well-posed linear system $\Sigma$ on $\Cc_+$.

The main part of the 
following result, namely, that $-K$ is a system node admissible feedback operator and that the resulting system node $S^K$ satisfies~\eqref{eq:SysNodeFBStability}, was shown in~\citel{CurWei19}{Thm.~4.2}.
We present an alternative proof to also establish a formula for the semigroup generator $A^K$.
The proof also shows that if $\re K\ge I$, then $S^K$ and its associated well-posed linear system $\Sigma^K$ are \emph{scattering passive} in the sense of~\citel{Sta02}{Sec.~3} and~\citel{Sta05book}{Ch.~11}.

\begin{theorem}
\label{thm:SysNodeFeedback}
Let $S$ be an impedance passive system node on $(U,X,U)$ and let $K\in \Lin(U)$ be such that $\re K\geq c_KI$ for some $c_K>0$.
There exists a unique system node $S^K$ on $(U,X,U)$
such that the operator
\eq{
M=\pmat{I&0\\0&I}+\pmat{0\\K(C\&D)}
}
maps $\Dom(S)$ continuously one-to-one onto
 $\Dom(S^K)$ and
$S=S^KM$.
The system node $S^K$ is well-posed
 and its semigroup generator $A^K: \Dom(A^K)\subset X\to X$ is given by
\begin{subequations}
\label{eq:SysNodeFBAK}
\eqn{
\Dom(A^K) &= \Bigl\{ x\in X\, \Bigm| \exists v\in U \mbox{ s.t.}
\ Ax - BKv\in X,\
v = C\& D \pmatsmall{x\\-Kv}
\Bigr\}\\
A^K x &= Ax - BKv(x),
\qquad x\in \Dom(A^K),
}
\end{subequations}
where $v(x)$ is the element `$v$' in the definition of $\Dom(A^K)$.
If we denote the well-posed linear system associated to $S^K$ by $\Sigma^K=(\mathbb{T}^K,\Phi^K,\Psi^K,\mathbb{F}^K)$, then
\eqn{
\label{eq:SysNodeFBStability}
\Norm{\pmat{\T_t^K&\sqrt{c_K}\Phi_t^K\\\sqrt{c_K}\Psi_t^K&c_K\F_t^K}}_{\Lin(X\times \Lp[2](0,t;U),X\times \Lp[2](0,t;Y))}\leq 1, \qquad t\geq 0.
}
In particular, 
$\Psi_\infty^K x_0\in \Lp[2](0,\infty;Y)$ and $\F_\infty^K u\in \Lp[2](0,\infty;Y)$ for all $x_0\in X$ and $u\in \Lp[2](0,\infty;U)$. 
\end{theorem}

\begin{proof}
Denote by $\Acl$ the operator defined in~\eqref{eq:SysNodeFBAK}.
We first note that $\Acl$ is linear and that it is also single-valued due to the fact that `$v$' in $\Dom(\Acl)$ is unique. Indeed, if $x\in \Dom(\Acl)$ and $v_1,v_2\in U$ have the properties of $v$ in $\Dom(\Acl)$, then $(0,K(v_1-v_2))^\top = (x,-Kv_2)^\top -(x,-Kv_1)^\top \in \Dom(S)$ and $v_2-v_1=C\& D(0,K(v_1-v_2))^\top $ together with the impedance passivity of $S$ imply that
\eq{
 c_K\norm{v_2-v_1}^2
&\leq \re \iprod{v_2-v_1}{K(v_2-v_1)}
 =\re \Iprod{C\&D \pmat{0\\K(v_1-v_2)}}{K(v_2-v_1)}\\
&\leq \re \Iprod{A\&B \pmat{0\\K(v_1-v_2)}}{0}=0,
 }
showing that $v_2=v_1$.

Let $\gl>0$.
Since $\re P(\gl)\geq 0$ and $\re K\ge c_KI$ for $c_K>0$,
 \citel{Pau19}{Lem.~A.1(a)} implies that
 $I+KP(\gl)=K (K\inv+P(\gl))$ and $I+P(\gl)K$ are boundedly invertible.
We can define $Q_\gl: \Dom(A)\to X$ by $Q_\gl x = x - R_\gl BK(I+P(\gl)K)\inv Cx$, $x\in \Dom(A)$.
If we denote $R_\gl=(\gl-A)\inv$ for brevity, it is straightforward to check that $Q_\gl R_\gl (\gl-\Acl)x =x$ for $x\in \Dom(\Acl)$, and that $\ran(Q_\gl R_\gl)\subset \Dom(\Acl)$ and $(\gl-\Acl)Q_\gl R_\gl=I$. 
In particular, $Q_\gl$ is injective, $\ran(Q_\gl)=\Dom(\Acl)$, and
 $\ran(\gl-\Acl)=X$.
If $x\in X$ and $u,v\in U$ are such that $(x,u-Kv)^\top \in \Dom(S)$ and $v=C\& D \pmatsmall{x\\u-Kv}$, then
the impedance passivity of $S$ and $\re K\geq c_KI$ imply that
\eq{
\re \Iprod{A\& B \pmat{x\\u-Kv}}{x}_X
&\le\re \Iprod{C\& D \pmat{x\\u-Kv}}{u-Kv}_U\\
&\le\re \iprod{v}{u}_U - c_K\norm{v}_U^2.
}
In particular, choosing $x\in \Dom(\Acl)$ and $u=0$ and letting $v=v(x)$ be the element in the definition of $x\in\Dom(\Acl)$ shows that $\re \iprod{\Acl x}{x}_X\leq -c_K\norm{v}_U^2\leq 0$. Thus $\Acl$ is $m$-dissipative and generates a contraction semigroup on $X$ by the Lumer--Phillips Theorem. 
In particular, $\ran(Q_\gl)=\Dom(\Acl)$ is dense in $X$, and therefore~\citel{Sta05book}{Thm.~7.4.9} implies that the operator
$-K$ is a system node admissible output feedback operator for $S$ in the sense of~\citel{Sta05book}{Def.~7.4.2} and that the semigroup generator $A^K$ of $S^K$ is exactly $\Acl$.
In particular, 
there exists a unique system node $S^K = \pmatsmall{(A\& B)^K\\ (C\& D)^K}$ on $(U,X,U)$ such that
 $M$ maps $\Dom(S)$ continuously one-to-one onto $\Dom(S^K)$ and $S = S^KM$.

To show that $S^K$ is well-posed and that~\eqref{eq:SysNodeFBStability} holds, let $(x,u)^\top\in \Dom(S^K)$. If we define $v= (C\& D)^K \pmatsmall{x\\u}$, then~\citel{Sta05book}{Def.~7.4.2} implies that $(x,u-Kv)^\top \in \Dom(S)$, $v=C\&D \pmatsmall{x\\u-Kv}$, and $(A\&B)^K \pmatsmall{x\\u} = A\&B \pmatsmall{x\\u-Kv}$. 
Therefore our above estimate and Young's inequality imply that
\eq{
\re \Iprod{(A\&B)^K \pmat{x\\u}}{x}_X
&\leq \re \Iprod{(C\&D)^K \pmat{x\\u}}{u}_U
  -c_K\Norm{(C\&D)^K \pmat{x\\u}}_U^2\\
&\leq \frac{1}{2c_K} \norm{u}_U^2
-\frac{c_K}{2}\Norm{(C\&D)^K \pmat{x\\u}}_U^2.
}
Because of this,~\citel{Sta05book}{Thm.~11.1.5} implies
that the system node $\pmatsmall{I&0\\0&\sqrt{c_K}} S^K\pmatsmall{I&0\\0&\sqrt{c_K}}$ is scattering passive in the sense of~\citel{Sta05book}{Def.~11.1.2}. 
In particular, this system node
 and $S^K$ are well-posed and the well-posed system $\Sigma^K$ associated to $S^K$ satisfies~\eqref{eq:SysNodeFBStability} by~\citel{Sta05book}{Lem.~11.1.3 \& Thm.~11.1.4}.
\end{proof}

\begin{remark}
\label{rem:SysNodeFBRem}
The arguments in the proof show that part of Theorem~\ref{thm:SysNodeFeedback} holds more generally in the case where $S$ is an impedance passive node, $\re K\geq 0$, the operator in~\eqref{eq:SysNodeFBAK} is single-valued, and  $I+P(\gl)K$ is boundedly invertible for some $\gl\in\C_+$  (which is in particular true if $\re P(\gl)\geq c_\gl I$ for some $\gl\in \C_+$ and $c_\gl>0$).
Under these weaker assumptions there exists a unique system node $S^K$ such that $M$ maps $\Dom(S)$ continuously one-to-one onto $\Dom(S^K)$ and $S=S^KM$, and the semigroup generator of $S^K$ is $A^K$ in~\eqref{eq:SysNodeFBAK}. The system node $S^K$ is impedance passive, but not necessarily well-posed.
\end{remark}

\section{Well-Posedness Under Nonlinear Feedback}\label{sec:WP_Nonlin}

In this section we study the existence of state trajectories and outputs of  an impedance passive linear system under a nonlinear feedback.
We separately consider situations where the closed-loop system has an input and an output (in Section~\ref{sec:WPinputs}), and where the input of the closed-loop system is set to zero (in Section~\ref{sec:WPnoinputs}). In the latter case we prove the existence of solutions under weaker conditions.

\subsection{Well-Posedness With External Inputs}
\label{sec:WPinputs}

In this section we study the existence of solutions 
of the closed-loop system
\begin{equation}
\label{eq:SysNodeSysMain}
\left[\begin{matrix}\dot{x}(t)\\y(t)\end{matrix}\right] = \pmat{A\&B\\ C\& D}\left[\begin{matrix}x(t)\\\phi(u(t)- y(t))\end{matrix}\right], 
\qquad 
t\geq 0,
\end{equation}
where $\phi: U\to U$
 is a globally Lipschitz continuous function and
\begin{equation}
\label{eq:Op_S}
S=\pmat{A\&B\\ C\&D}, \qquad
D(S) 
 = \left\{(x,u)^\top
 \in X\times U \ \middle| \  Ax + Bu\in X\right\}
\end{equation} is an impedance passive system node on 
the Hilbert spaces $(U,X,U)$.
 Before stating our main result we define the concepts of
classical and generalised solutions of the system~\eqref{eq:SysNodeSysMain}, cf.~\cite[Def. 4.2]{TucWei14}, \citel{SinWei23}{Def.~5.1}.

\begin{definition}
\label{def:SysNodeStates}
Let $S$ be the system node~\eqref{eq:Op_S} on $(U,X,U)$.
A triple $(x,u,y)$ is called a \emph{classical solution} of \eqref{eq:SysNodeSysMain} on $\zinf$ if
\begin{itemize}
    \item $x\in C^1(\zinf;X)$,
$u\in C(\zinf;U)$, and
 $y\in C(\zinf;U)$;
    \item $\left[\begin{smallmatrix}x(t)\\\phi(u(t)-y(t))\end{smallmatrix}\right]\in D(S)$ for all $t\geq 0$;
    \item \eqref{eq:SysNodeSysMain} holds for every $t\geq 0$.
\end{itemize}
A triple $(x,u,y)$ is called a \emph{generalised solution} of~\eqref{eq:SysNodeSysMain} on $\zinf$ if 
\begin{itemize}
    \item $x\in C(\zinf;X)$, 
    $u\in\Lploc[2](0,\infty;U)$, and $y\in\Lploc[2](0,\infty;U)$;
    \item there exists a sequence $(x^k,u^k,y^k)$ of classical solutions of  \eqref{eq:SysNodeSysMain} on $\zinf$ such that for every $\tau>0$ we have
$(\Pt[\tau] x^k,\Pt[\tau] u^k,\Pt[\tau] y^k)^\top\to (\Pt[\tau] x,\Pt[\tau] u,\Pt[\tau] y)^\top$ as $k\to \infty$ in $C([0,\tau];X)\times \L^2(0,\tau;U) \times \L^2(0,\tau;U)$.
\end{itemize}
\end{definition}

We make the following assumption on the nonlinearity $\phi$.
The condition in particular implies
that $\phi$ is monotone and globally Lipschitz continuous with Lipschitz constant $1/\phicoeff$.
Examples of functions satisfying \cref{ass:PhiAss} are provided in \cref{ex:scalarSatComplex}.

\begin{assumption}
\label{ass:PhiAss}
The function $\phi: U\to U$ on the Hilbert space $U$
satisfies
\eq{
\re \iprod{\phi(u_2)-\phi(u_1)}{u_2-u_1}_U\geq \phicoeff\norm{\phi(u_2)-\phi(u_1)}_U^2, \qquad u_1,u_2\in U
}
for some $\phicoeff > 0$.
\end{assumption}

The following theorem is the main result of this section.
It shows that under Assumption~\ref{ass:PhiAss} the closed-loop system~\eqref{eq:SysNodeSysMain} resulting from nonlinear feedback has well-defined classical and generalised solutions. The estimate~\eqref{eq:ContractivityEstim} shows that if $\phicoeff = 1$, then the closed-loop system considered
with the input $u$ and the output
$\phi(u-y)$ is \emph{incrementally scattering passive} in the sense of~\cite{SinWei22}. Moreover, in the absence of an external input the closed-loop system is contractive.
In order to study the classical solutions of~\eqref{eq:SysNodeSysMain} we define a subset $\Dcomp$ of $X\times U$ by
\eq{
\Dcomp = \Bigl\{ (x,u)^\top\hspace{-.2ex}\in X\times U \hspace{-.3ex}\Bigm|\hspace{-.2ex} \exists v\in U \hspace{-.2ex}\mbox{ s.t. }
Ax + B\phi(u- v)\in X,
v = C\& D \pmatsmall{x\\\phi(u-v)}
\Bigr\}.
}

\begin{theorem}
\label{thm:LPSolutionsMain}
Suppose that $S$ is an 
impedance passive well-posed system node on the Hilbert spaces $(U,X,U)$ and let $\Sigma = (\T,\Phi,\Psi,\F)$ be the associated well-posed linear system.
Moreover, suppose that the tranfer function $P$ of $\Sigma$ satisfies $\re P(\gl)\geq c_\lambda I$ for some $\gl,c_\gl>0$, and 
that $\phi:U\to U$ satisfies Assumption~\textup{\ref{ass:PhiAss}} for some $\phicoeff>0$.
If $x_0\in X$ and $u\in \Lploc[2](0,\infty;U)$, then the system~\eqref{eq:SysNodeSysMain} has a 
 generalised solution $(x,u,y)$ satisfying $x(0)=x_0$. This solution satisfies $t\mapsto \phi(u(t)-y(t))\in\Lploc[2](0,\infty;U)$ and
\begin{subequations}
\label{eq:LPSemigStateAndOutput}
\eqn{
\label{eq:LPSemigState}
x(t) &= \mathbb{T}_t x_{0} +  \Phi_t \Pt[t]\phi(u- y), \qquad t\geq 0,\\
\label{eq:LPSemigOutput}
 y &= \Psi_\infty x_{0}  + \mathbb{F}_\infty \phi(u-y).
}
\end{subequations}
If $x_0\in X$ and $u\in \Hloc[1](0,\infty;U)$ are such that $(x_0,u(0))^\top\in \Dcomp$, then this $(x,u,y)$ is a classical solution of~\eqref{eq:SysNodeSysMain} on $\zinf$ and $y\in \Hloc[1](0,\infty;U)$.

If $(x_1,u_1,y_1)$ and $(x_2,u_2,y_2)$ are two generalised solutions of~\eqref{eq:SysNodeSysMain}, then for all $t\geq 0$ we have
\begin{subequations}
\label{eq:ContractivityEstim}
\begin{align}
\MoveEqLeft\norm{x_2(t)-x_1(t)}_X^2 + \phicoeff\int_0^t \norm{\phi(u_2(s)-y_2(s))-\phi(u_1(s)-y_1(s))}_{U}^2\d s\\
&\leq \norm{x_2(0)-x_1(0)}_X^2+\frac{1}{\phicoeff}\int_0^t \norm{u_2(s)-u_1(s)}_{U}^2\d s.
\end{align}
\end{subequations}
\end{theorem}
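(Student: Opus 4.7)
The plan is to split the proof into three parts: establish the contractivity estimate~\eqref{eq:ContractivityEstim} (which simultaneously yields uniqueness), construct generalised solutions via a Lax--Phillips semigroup argument along the lines indicated in the introduction, and finally upgrade to classical solutions for data in $\Dcomp$. The coercivity assumption $\re P(\gl)\geq c_\gl I$ will enter through the solvability of the implicit feedthrough equation in the closed loop, while the cocoercivity of $\phi$ is the crucial ingredient that turns the nonlinear feedback into a contractive perturbation on the Lax--Phillips side.

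To derive~\eqref{eq:ContractivityEstim}, I would first take two classical solutions $(x_j, u_j, y_j)$, $j=1,2$, so that their difference is itself a classical trajectory of the linear system $\Sigma$ driven by input $\phi(u_2-y_2)-\phi(u_1-y_1)$ with output $y_2-y_1$. Writing $\eta_j := u_j-y_j$ and applying impedance passivity to the difference gives
\begin{equation*}
\tfrac{d}{dt}\norm{x_2(t)-x_1(t)}_X^2
\leq 2\re\iprod{\phi(\eta_2)-\phi(\eta_1)}{(u_2-u_1)-(\eta_2-\eta_1)}_U.
\end{equation*}
Bounding the $(\eta_2-\eta_1)$ piece below by $2\kappa\norm{\phi(\eta_2)-\phi(\eta_1)}^2$ via Assumption~\ref{ass:PhiAss} and applying Young's inequality with parameter $\kappa$ to the $(u_2-u_1)$ piece produces $\tfrac{d}{dt}\norm{x_2-x_1}_X^2 \leq -\kappa\norm{\phi(\eta_2)-\phi(\eta_1)}_U^2 + \kappa^{-1}\norm{u_2-u_1}_U^2$, and integration yields~\eqref{eq:ContractivityEstim}. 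Uniqueness is then immediate, and the estimate extends to generalised solutions by passing to the limit in the approximating sequences.

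For existence, I would build the Lax--Phillips semigroup of the closed-loop system on $\XLP = X \times \Lp[2](0,\infty;U)$, whose state $(x,v)$ encodes the system state together with the shifted external input $v(\cdot)=u(t+\cdot)$. Its formal generator acts as the left shift in $v$ paired with $(x,v(0))\mapsto A\&B\pmatsmall{x\\ \phi(v(0)-y)}$, where $y$ is defined implicitly by $y = C\&D\pmatsmall{x\\ \phi(v(0)-y)}$. This implicit equation is solvable because $\kappa$-cocoercivity renders $I-2\kappa\phi$ non-expansive, and the loop transformation of Theorem~\ref{Thm_Sta_ImpPass}, applied with appropriate gain, converts the impedance passive system into a scattering contraction. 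Composing these two contractive ingredients makes the closed-loop Lax--Phillips generator m-dissipative on a dense domain, and the Crandall--Liggett theorem then furnishes a nonlinear contraction semigroup $\TLP$. Projecting $\TLP$ onto its first component yields $x$, while $y$ is read off the feedthrough equation; the integral identities~\eqref{eq:LPSemigStateAndOutput} hold on a dense set of data by the variation-of-parameters formula for $\Sigma$ and extend to arbitrary $(x_0,u)\in X\times \Lploc[2](0,\infty;U)$ through~\eqref{eq:ContractivityEstim}.

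The main obstacle will be the rigorous construction of the m-dissipative generator on $\XLP$: verifying the range condition for its resolvent reduces, via the shift structure, to solving a stationary problem for $S$ coupled with $\phi$, and it is exactly here that the coercivity of $\re P(\gl)$ and the cocoercivity of $\phi$ must be combined to produce the perturbed resolvent. The classical-solution statement for $(x_0, u(0))\in \Dcomp$ with $u\in \Hloc[1](0,\infty;U)$ then follows by recognising that $\Dcomp$ encodes precisely the compatibility condition for the Lax--Phillips initial data to lie in the domain of the generator: $\TLP$ preserves this regularity, so $x\in C^1(\zinf;X)$, and differentiating the semigroup identity while comparing components gives $S\pmatsmall{x(t)\\ \phi(u(t)-y(t))} = \pmatsmall{\dot x(t)\\ y(t)}$ pointwise, as required by Definition~\ref{def:SysNodeStates}.
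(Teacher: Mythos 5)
Your overall strategy coincides with the paper's: a Lax--Phillips semigroup whose nonlinear generator is shown to be $m$-dissipative, the Crandall--Liggett/Komura theory to produce a contraction semigroup, the incremental estimate derived from impedance passivity plus the cocoercivity of $\phi$, and a density argument to pass to generalised solutions. Your derivation of~\eqref{eq:ContractivityEstim} via Young's inequality is correct and is an equivalent repackaging of the paper's Lemma~\ref{lem:PhiProps}(b). However, there are two concrete gaps.

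First, your Lax--Phillips state space $X\times\Lp[2](0,\infty;U)$ omits the \emph{outgoing} channel $\Lp[2](-\infty,0;U)$ that the paper includes. Dissipativity still goes through on the two-component space, but the outgoing channel is not decorative: for a domain element it lies in $\H^1(-\infty,0;U)$ and records the past values of $\phi(u-y)$, and invariance of the domain under the semigroup is exactly how the paper concludes $t\mapsto\phi(u(t)-y(t))\in\Hloc[1](0,\infty;U)$. This regularity is indispensable for the classical-solution claim: the nonlinear semigroup theory only yields a locally Lipschitz, a.e.-differentiable orbit, so ``$\TLP$ preserves this regularity, hence $x\in C^1$'' does not follow. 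One must feed the $\Hloc[1]$ regularity of the effective input $\phi(u-y)$ into the linear theory (\citel{TucWei14}{Prop.~4.6--4.7}) to upgrade to $x\in C^1(\zinf;X)$ and to get the system node equation for \emph{every} $t\ge 0$. Your construction has no mechanism to produce this regularity.

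Second, the solvability of the implicit feedthrough equation $y=r+P(\gl)\phi(u-y)$ --- which you correctly identify as the crux of the range condition --- is only gestured at. The assertion that non-expansiveness of $I-2\kappa\phi$ together with the loop transformation of Theorem~\ref{Thm_Sta_ImpPass} settles it does not obviously yield a fixed point, since $P(\gl)$ is merely coercive, not self-adjoint or a contraction. The paper's Lemma~\ref{lem:PhiProps}(a) handles this by inverting $P(\gl)$, splitting $P(\gl)^{-1}$ into self-adjoint and skew parts, absorbing the skew part into a new monotone map, symmetrising with $S^{1/2}$, and invoking surjectivity of $I+\psi$ for a maximally monotone $\psi$ on a Hilbert space; some argument of this calibre is needed and should not be left as an acknowledged ``obstacle.'' With these two points repaired, your outline would reproduce the paper's proof.
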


The proof of Theorem~\ref{thm:LPSolutionsMain} is presented in Section~\ref{sec:WPproofs}.

\begin{remark}
\label{rem:SysNodeWPIncrEstimate}
Combining the estimate~\eqref{eq:ContractivityEstim} with the formula~\eqref{eq:LPSemigOutput}
shows that
 for every $t>0$ there exists $M_t>0$ such that
if
$(x_1,u_1,y_1)$ and $(x_2,u_2,y_2)$ are two generalised solutions of~\eqref{eq:SysNodeSysMain},
then
\eq{
\MoveEqLeft\norm{x_2(t)-x_1(t)}_X^2 + \int_0^t \norm{y_2(s)-y_1(s)}_{U}^2\d s\\
&\leq M_t\norm{x_2(0)-x_1(0)}_X^2+M_t\int_0^t \norm{u_2(s)-u_1(s)}_{U}^2\d s.
}
This estimate in particular implies that
if $x_1(0)=x_2(0)$ and $u_1=u_2$, then necessarily $y_1=y_2$ and 
$x_1(t)=x_2(t)$ for $t\geq 0$. This means that the generalised ``state trajectory'' $x$ and ``output'' $y$ of~\eqref{eq:SysNodeSysMain} are uniquely determined by $x(0)$ and $u$.
\end{remark}

\begin{remark}
By~\citel{Log20}{Thm.~4.4} 
the condition
that 
 $\re P(\gl)\geq c_\gl I$ for some $c_\gl>0$
 in Theorem~\ref{thm:LPSolutionsMain}
is satisfied for one 
$\gl>0$ if and only if it is satisfied for all $\gl>0$ (here $c_\gl>0$ is allowed to depend on $\gl>0$).
The proofs of Lemma~\textup{\ref{lem:PhiProps}}, Proposition~\textup{\ref{prp:LPmaxDissipative}}, and Theorem~\textup{\ref{thm:LPSolutionsMain}} below
will demonstrate that 
this condition 
in Theorem~\ref{thm:LPSolutionsMain}
    can be replaced with any other assumption
    which guarantees that
for any $v\in U$ 
    the equation  $z+P(\gl)\phi(z)=v$ has a unique solution which is determined by $z=h(v)$, where $h:U\to U$ is a globally Lipschitz continuous function.
\end{remark}

\begin{remark}
We note that many of the arguments in the proof of Theorem~\ref{thm:LPSolutionsMain} remain valid even without the assumption that the system node $S$ is well-posed. 
Indeed, the well-posedness of the system node will be used in deducing that if $(x_0,u(0))^\top\in \Dcomp$, then $x\in C^1(\zinf;X)$ and that~\eqref{eq:LPACPx} holds for all $t\geq 0$, and in deducing that the system has a well-defined generalised output.
In the situation where the system node $S$ is impedance passive but not necessarily well-posed the arguments in the proof show that for all $x_0\in X$ and $u\in \Hloc[1](0,\infty;U)$ satisfying $(x_0,u(0))^\top\in \Dcomp$ the system~\eqref{eq:SysNodeSysMain} has a \emph{strong solution} in the sense that
\begin{itemize}
    \item $x\in \Wloc[1](0,\infty;X)$, 
    $u\in C(\zinf;U)$, and $y\in C(\zinf;U)$;
    \item $\left[\begin{smallmatrix}x(t)\\\phi(u(t)-y(t))\end{smallmatrix}\right]\in D(S)$ for all $t\geq 0$;
    \item \eqref{eq:SysNodeSysMain} holds for almost every $t\geq 0$.
\end{itemize}
The arguments in the proof will also show that any two such strong solutions satisfy the estimate~\eqref{eq:ContractivityEstim}.
Moreover, if $x_0\in X$ and $u\in \Lploc[2](0,\infty;U)$, then~\eqref{eq:SysNodeSysMain} still has a ``generalised state trajectory" $x\in C(\zinf;X)$ in the sense that 
    there exists a sequence $(x^k,u^k,y^k)$ of strong solutions of~\eqref{eq:SysNodeSysMain} such that for every $\tau>0$ we have
$(\Pt[\tau] x^k,\Pt[\tau] u^k)^\top\to (\Pt[\tau] x,\Pt[\tau] u)^\top$ as $k\to \infty$ in $C([0,\tau];X)\times \L^2(0,\tau;U)$.
Without well-posedness of the system node, the arguments in
 the proof of Theorem~\ref{thm:LPSolutionsMain} do not guarantee the existence of a generalised output,
but they show that there exists $\psi\in\Lploc[2](0,\infty;U)$ such that the sequence $(x^k,u^k,y^k)$ of strong solutions above satisfies $\Pt[\tau]\phi(u^k-y^k)\to \Pt[\tau]\psi$ as $k\to\infty$ for all $\tau>0$.
\end{remark}

\begin{example}\label{ex:scalarSatComplex}
Let $\psi:\zinf\to \zinf$ be a non-decreasing function which is globally Lipschitz continuous with Lipschitz constant $L_\psi$ and satisfies $\psi(0)=0$. Then $\phi: U \to U$ defined by $\phi(0)=0$ and $\phi(u)=\psi(\norm{u})\norm{u}\inv u$ for $u\neq 0$ satisfies \cref{ass:PhiAss} with $\phicoeff=1/L_\psi$. 
In particular, the \emph{saturation function} 
$\phi:\mathbb{C}\to\mathbb{C}$ 
defined so that
 ${\phi}(u) = u$ whenever $\vert u\vert< 1$ and ${\phi}(u) = u/\vert u\vert$ whenever $\vert u\vert\geq 1$ satisfies Assumption~\ref{ass:PhiAss} with $\phicoeff=1$.
Moreover, if $\phi_1:U_1\to U_1$ and $\phi_2:U_2\to U_2$ satisfy \cref{ass:PhiAss} for some $\phicoeff$, then also $\phi : U\to U$ defined by $\phi(u)=(\phi_1(u_1),\phi_2(u_2))^\top$ for $u=(u_1,u_2)^\top\in U:=U_1\times U_2$ satisfies \cref{ass:PhiAss} with the same $\phicoeff$.
\end{example}

\subsection{Well-Posedness Without External Inputs}
\label{sec:WPnoinputs}

In this section we consider the system
\eqn{
\label{eq:SysNodeSysNoInput}
\pmat{\dot x(t)\\ y(t)}
= S \pmat{x(t)\\ \phi(-y(t))}, \qquad t\geq 0,
}
where $S$ is an impedance passive system node.
We note that Theorem~\ref{thm:LPSolutionsMain} with $u(t)\equiv 0$ already provides conditions for the existence of generalised and classical solutions of this system. 
In this section we show the existence of solutions without well-posedness of $S$ and
under considerably weaker assumptions on  $\phi: U\to U$.
The system~\eqref{eq:SysNodeSysNoInput} can be formulated as a nonlinear abstract Cauchy problem 
\eqn{
\label{eq:nACP}
\dot x(t) = A_\phi(x(t)), \qquad t\geq 0,
}
when we define the operator $A_\phi :\Dom(A_\phi)\subset X\to X$ so that 
\eq{
\Dom(A_\phi) &= \Bigl\{ x\in X\, \Bigm| \exists v\in U \mbox{ s.t.}
\ Ax + B\phi(-v)\in X,\
v = C\& D \pmatsmall{x\\\phi(-v)}
\Bigr\}\\
A_\phi x &= Ax + B\phi(-v(x)),
\qquad x\in \Dom(A_\phi),
}
where $v(x)$ is the element $v$ in the definition of $\Dom(A_\phi)$.

\begin{definition}
\label{def:SysNodeNoInputSolutions}
Let $S$ be a system node on the spaces $(U,X,U)$.
\begin{itemize}
\item
The function $x\in \Hloc[1](0,\infty;X)$ is called a \emph{strong solution} of~\eqref{eq:nACP} if $x(t)\in \Dom(A_\phi)$ for all $t\geq 0$ and if the identity in~\eqref{eq:nACP} holds for almost every $t\geq 0$.
\item
The function $x\in C(\zinf;X)$ is called a \emph{generalised solution} of \eqref{eq:nACP} on $\zinf$ if 
there exists a sequence
 $(x^k)_{k\in\N}$ 
of strong solutions of~\eqref{eq:nACP} on $\zinf$ such that $\norm{\Pt[\tau] x^k-\Pt[\tau] x}_{C([0,\tau];X)}\to 0 $ as $k\to\infty$ for all $\tau>0$.
\end{itemize}
\end{definition}

It follows from the definition that if $x$ is a strong solution of~\eqref{eq:nACP}, we can define a corresponding \emph{output} so that $y(t)=v(x(t))$ (the element `$v$' corresponding to $x(t)\in\Dom(A_\phi)$) for $t\geq 0$.
Then $x$ and $y$ are such that the identity in~\eqref{eq:SysNodeSysNoInput} holds for almost every $t\geq 0$ (and the second line holds for all $t\ge 0$).

The following result introduces conditions for the existence of strong and generalised solutions of~\eqref{eq:SysNodeSysNoInput}. In particular, the proof shows that under the given assumptions 
 $A_\phi$ is a single-valued $m$-dissipative operator.
The theorem is related to the well-posedness results in~\cite{GraCal11,Tro14,Aug19}.

\begin{theorem}
\label{thm:nonlinACPsol}
Let $S$ be an impedance passive system node whose transfer function satisfies $\re P(\gl)\geq c_\gl I$ for some $\gl,c_\gl>0$, and let
$\phi:U\to U$ be a continuous monotone function
satisfying
  $\re \iprod{\phi(u_2)-\phi(u_1)}{u_2-u_1}> 0$ whenever $\phi(u_1)\neq \phi(u_2)$.
If $x_1$ and $x_2$ are two generalised solutions of~\eqref{eq:nACP}, then
\eqn{
\label{eq:NLFBContractivity}
\norm{x_2(t)-x_1(t)}_X \leq \norm{x_2(0)-x_1(0)}_X, \qquad t\geq 0.
}
For every 
$x_0\in \overline{\Dom(A_\phi)}$
the equation~\eqref{eq:nACP} has a 
unique generalised solution
$x$ satisfying $x(0)=x_0$.
If $x_0\in \Dom(A_\phi)$, then
this $x$ is a strong solution of~\eqref{eq:nACP} and its corresponding output $y$  defined by $y(t)=v(x(t))$, $t\geq 0$, is a
 right-continuous function 
such that $y\in \Lp[\infty](0,\infty;U)$
 and $\phi(-y)\in \Lp[\infty](0,\infty;U)$.

If $S$ is well-posed and its associated well-posed system is $\Sigma = (\T,\Phi,\Psi,\F)$, then for every $x_0\in \Dom(A_\phi) $ the strong solution $x$ and the corresponding output $y$ satisfy
\begin{subequations}
\label{eq:NLFBStateandOutput}
\eqn{
\label{eq:NLFBState}
x(t) &= \mathbb{T}_t x_{0} +  \Phi_t \Pt[t]\phi(- y), \qquad t\geq 0,\\
\label{eq:NLFBOutput}
 y &= \Psi_\infty x_{0}  + \mathbb{F}_\infty \phi(-y).
}
\end{subequations}
\end{theorem}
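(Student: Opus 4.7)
The plan is to recast the closed-loop system~\eqref{eq:nACP} as an evolution equation governed by a single-valued $m$-dissipative operator $A_\phi$ on the Hilbert space $X$, and then invoke the classical Komura--Brezis theory of nonlinear contraction semigroups, using the transfer-function hypothesis to establish the range condition at the $\gl>0$ given in the statement.

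\textbf{Single-valuedness and dissipativity.} If $v_1,v_2\in U$ both realise the definition of membership of some $x$ in $\Dom(A_\phi)$, subtracting the two defining identities shows that $(0,\phi(-v_1)-\phi(-v_2))^\top\in\Dom(S)$ with $A\&B$-image zero and $C\&D$-image $v_1-v_2$. Impedance passivity (\cref{def:ImpPass}) applied to this element gives $\re\iprod{v_1-v_2}{\phi(-v_1)-\phi(-v_2)}\geq 0$, which together with monotonicity of $\phi$ (the reverse inequality) forces equality; the hypothesis that $\phi$ is strictly monotone whenever its values differ then yields $\phi(-v_1)=\phi(-v_2)$ and hence $v_1=v_2$. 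The same impedance-passivity argument applied to $(x_1-x_2,\phi(-v_1)-\phi(-v_2))^\top\in\Dom(S)$, combined with monotonicity of $\phi$, gives $\re\iprod{A_\phi x_1-A_\phi x_2}{x_1-x_2}\leq 0$, so $A_\phi$ is dissipative.

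\textbf{Range condition (the main obstacle).} Solving $\gl x-A_\phi x=z$ for given $z\in X$ reduces, after inverting $\gl-A$ on $X_{-1}$ and using the definition of the transfer function, to the nonlinear equation
\eq{
w+P(\gl)\phi(w)=g, \qquad g:=-C(\gl-A)\inv z\in U,
}
with $w:=-v$. Since $\re P(\gl)\geq c_\gl I$, the operator $P(\gl)\in\Lin(U)$ is boundedly invertible with $\re P(\gl)\inv\geq c'I$ for some $c'>0$. Multiplying the equation by $P(\gl)\inv$ recasts it as $G(w)=P(\gl)\inv g$ with $G(w):=P(\gl)\inv w+\phi(w)$. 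The operator $G$ is continuous, satisfies the coercive monotonicity $\re\iprod{G(w_1)-G(w_2)}{w_1-w_2}\geq c'\norm{w_1-w_2}^2$ (using monotonicity of $\phi$), and is coercive as a map $U\to U$ since $\re\iprod{G(w)}{w}\geq c'\norm{w}^2-\norm{\phi(0)}\norm{w}$. By the Browder--Minty theorem, $G$ is bijective with $G\inv$ globally Lipschitz of constant $1/c'$. Setting $v=-w$ and $x=(\gl-A)\inv(z+B\phi(-v))\in X$ produces $x\in\Dom(A_\phi)$ with $\gl x-A_\phi x=z$, so $A_\phi$ is $m$-dissipative.

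\textbf{Semigroup, output and well-posed case.} By Komura--Brezis theory, $A_\phi$ generates a strongly continuous semigroup of contractions on $\overline{\Dom(A_\phi)}$. The orbit from $x_0\in\overline{\Dom(A_\phi)}$ is the unique generalised solution, and for $x_0\in\Dom(A_\phi)$ it is a strong solution: Lipschitz from $\zinf$ to $X$, with $x(t)\in\Dom(A_\phi)$ for every $t\geq 0$, $\norm{A_\phi x(t)}\leq\norm{A_\phi x_0}$, and $t\mapsto A_\phi x(t)$ right-continuous. The estimate~\eqref{eq:NLFBContractivity} is immediate for strong solutions from dissipativity and extends to generalised solutions by density. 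Applying the resolvent analysis above pointwise in $t$ shows that $-y(t)$ is the unique $w$ solving $w+P(\gl)\phi(w)=-C(\gl-A)\inv(\gl x(t)-A_\phi x(t))$; the Lipschitz bound on $G\inv$ transfers this to Lipschitz dependence of $y(t)$ and $\phi(-y(t))$ on $A_\phi x(t)$, which yields right-continuity of $y$ together with $y,\phi(-y)\in\Lp[\infty](0,\infty;U)$. Finally, if $S$ is well-posed with associated $\Sigma=(\T,\Phi,\Psi,\F)$ and $x_0\in\Dom(A_\phi)$, then $\phi(-y)\in\Lp[\infty](0,\infty;U)\subset\Lploc[2](0,\infty;U)$, and $(x,y)$ satisfies $\pmatsmall{\dot x\\y}=S\pmatsmall{x\\\phi(-y)}$ a.e.\ on $\zinf$; this is the system-node formulation of $\Sigma$ with initial state $x_0$ and input $\phi(-y)$, so uniqueness of the mild state trajectory and output of $\Sigma$ gives~\eqref{eq:NLFBStateandOutput}.
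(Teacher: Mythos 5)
Your proposal is correct and follows the same overall route as the paper's proof: single-valuedness and dissipativity of $A_\phi$ from impedance passivity combined with (strict) monotonicity of $\phi$, the range condition reduced to solvability of $w+P(\gl)\phi(w)=g$ in $U$, generation of a nonlinear contraction semigroup by the resulting $m$-dissipative operator, and the pointwise resolvent identity expressing $y(t)$ as a Lipschitz function of $C(\gl-A)\inv(\gl x(t)-A_\phi x(t))$ to get right-continuity and boundedness of $y$ and $\phi(-y)$. The one genuinely different ingredient is the range condition: you multiply by $P(\gl)\inv$ and apply Browder--Minty to the continuous, strongly monotone map $G=P(\gl)\inv+\phi$, whereas the paper's Lemma~\ref{lem:PhiProps}(a) decomposes $P(\gl)\inv=S+J$ into self-adjoint and skew parts, substitutes $\tilde z=S^{1/2}z$, and reduces to surjectivity of $I+\psi$ for a continuous monotone $\psi$ via $m$-dissipativity. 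Both arguments are valid and both deliver the global Lipschitz bound on the solution map needed for the output regularity; yours is slightly more direct. The one step you should tighten is the last: appealing to ``uniqueness of the mild state trajectory and output of $\Sigma$'' inverts the logical direction. What must be shown is that a Lipschitz state trajectory satisfying the node equations only almost everywhere, driven by an input that is merely $\Lp[\infty]$ (not $\Hloc[1]$ with a compatibility condition), is in fact the mild trajectory and output of $\Sigma$ for the input $\phi(-y)$. The paper justifies this via \citel{TucWei14}{Prop.~4.7} for the state equation and, for the output equation, a Laplace-transform comparison using $(x,\phi(-y))^\top\in\Lp[\infty](0,\infty;\Dom(S))$ and $C\& D\in\Lin(\Dom(S),U)$ together with uniqueness of Laplace transforms. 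This is standard but not automatic, so it deserves an explicit argument or citation rather than a one-line appeal to uniqueness.
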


The following lemma lists
selected sufficient conditions for $\Dom(A_\phi)$ to be dense in $X$.
The proofs of \cref{thm:nonlinACPsol} and \cref{lem:DomAphiDense} will be presented in \cref{sec:WPproofs}.

\begin{lemma}
\label{lem:DomAphiDense}
Suppose that $S$ is an impedance passive system node whose transfer function satisfies $\re P(\gl)\geq c_\gl I$ for some $\gl,c_\gl>0$ and that $\phi: U\to U$ is a continuous monotone function. The set $\Dom(A_\phi)$ is dense in $X$ whenever one of the following conditions holds.
\begin{itemize}
\item[\textup{(a)}] The kernel $\ker(C)$ is dense in $X$.
\item[\textup{(b)}] 
$\norm{(\gl-A)\inv B}\to 0$ as $\gl\to\infty$
and
$\phi$ is bounded, i.e., $\sup_{u\in U}\norm{\phi(u)}<\infty$.
\item[\textup{(c)}] $P(\gl)\to 0$ as $\gl \to \infty$ and $\phi$ is globally Lipschitz continuous.
\item[\textup{(d)}]
 $B\in \Lin(U,X)$, 
$C$ extends to an operator $C\in \Lin(X,U)$, and
 $C\& D = [C,\ 0]$.
\item[\textup{(e)}]
 $S$ is well-posed
 and $\phi$ satisfies Assumption~\textup{\ref{ass:PhiAss}} for some $\phicoeff>0$.
\item[\textup{(f)}]
$S$ is well-posed with associated well-posed linear system $\Sigma = (\T,\Phi,\Psi,\F)$, 
$A_\phi$ is single-valued,
and
 for every $x_0\in X$ there exist $x\in C(\zinf;X)$ and $y\in\Lploc[2](0,\infty;U)$ which satisfy~\eqref{eq:NLFBStateandOutput}.
\end{itemize}
\end{lemma}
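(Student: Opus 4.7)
The plan is to handle all six cases through a single construction and then specialise. Given $\gl>0$ with $\re P(\gl)\geq c_\gl I$, I would first observe that $P(\gl)$ is invertible on $U$ and that $L:=P(\gl)^{-1}$ again satisfies $\re L\geq c'I$ for some $c'>0$: setting $y=Lx$ gives $\re\iprod{Lx}{x}=\re\iprod{y}{P(\gl)y}\geq c_\gl\norm{y}^2\geq (c_\gl/\norm{P(\gl)}^2)\norm{x}^2$. Substituting $w=-v$ turns the equation $v=Cz+P(\gl)\phi(-v)$ into $Lw+\phi(w)=-LCz$; the left-hand side is continuous, strictly monotone with constant $c'$, and coercive, so by Browder--Minty it is a homeomorphism of $U$. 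Hence for every $z\in\Dom(A)$ there is a unique $v_\gl(z)\in U$ solving $v=Cz+P(\gl)\phi(-v)$, and setting $x_\gl(z):=z+(\gl-A)^{-1}B\phi(-v_\gl(z))$ yields an element of $\Dom(A_\phi)$: indeed $x_\gl(z)-(\gl-A)^{-1}B\phi(-v_\gl(z))=z\in\Dom(A)$ places $(x_\gl(z),\phi(-v_\gl(z)))^\top$ in $\Dom(S)$, while the system-node decomposition at $\gl$ yields $C\&D\pmatsmall{x_\gl(z)\\ \phi(-v_\gl(z))}=Cz+P(\gl)\phi(-v_\gl(z))=v_\gl(z)$.

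For case \textup{(a)} the key observation is that when $z\in\ker(C)$ the equation for $v_\gl(z)$ loses its dependence on $z$; its unique solution is a single element $v_\gl\in U$, and the shift $c_\gl:=(\gl-A)^{-1}B\phi(-v_\gl)\in X$ is independent of $z$. Therefore $\Dom(A_\phi)\supset \ker(C)+c_\gl$, and since translation is a homeomorphism of $X$, passing to closures gives $\overline{\Dom(A_\phi)}\supset \overline{\ker(C)}+c_\gl=X+c_\gl=X$, which is the required density.

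For cases \textup{(b)} and \textup{(c)} I would combine the construction with the passivity estimate $\gl\norm{(\gl-A)^{-1}Bu}_X^2\leq \re\iprod{P(\gl)u}{u}_U$, obtained by applying the impedance-passivity inequality to the stationary pair $((\gl-A)^{-1}Bu,u)^\top\in\Dom(S)$, for which $A\&B=\gl(\gl-A)^{-1}Bu$ and $C\&D=P(\gl)u$. In \textup{(b)}, boundedness of $\phi$ gives $\norm{x_\gl(z)-z}\leq \norm{(\gl-A)^{-1}B}\sup_{u\in U}\norm{\phi(u)}\to 0$ as $\gl\to\infty$, so $\Dom(A)\subset\overline{\Dom(A_\phi)}$ and density follows from density of $\Dom(A)$ in $X$. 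In \textup{(c)}, the passivity estimate yields $\norm{(\gl-A)^{-1}B}^2\leq \norm{P(\gl)}/\gl\to 0$, while the fixed-point equation and the global Lipschitz continuity of $\phi$ force $\norm{v_\gl(z)}\leq 2(\norm{Cz}+\norm{P(\gl)}\norm{\phi(0)})$ once $\norm{P(\gl)}$ is small enough to absorb the Lipschitz contribution; hence $\phi(-v_\gl(z))$ remains bounded and the same conclusion follows.

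Case \textup{(d)} is immediate: the hypotheses force $\Dom(S)=\Dom(A)\times U$ and $C\&D\pmatsmall{x\\ u}=Cx$, so $v=Cx$ always realises the consistency condition and $\Dom(A_\phi)=\Dom(A)$. For \textup{(e)} and \textup{(f)} the plan is to use existence of generalised solutions. In \textup{(e)}, Theorem~\ref{thm:LPSolutionsMain} applied with $u\equiv 0$ gives a generalised solution of \eqref{eq:SysNodeSysMain} for every $x_0\in X$, and this solution is by definition a limit in $C([0,\tau];X)$ of classical solutions $(x^k,0,y^k)$; each classical solution satisfies $(x^k(0),0)^\top\in \Dcomp$, i.e.~$x^k(0)\in\Dom(A_\phi)$, so $x_0=\lim_k x^k(0)\in\overline{\Dom(A_\phi)}$. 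Case \textup{(f)} is handled analogously by using the hypothesised solvability of \eqref{eq:NLFBStateandOutput} to produce the approximating classical solutions. The principal technical obstacle I anticipate is the Browder--Minty step, in particular ensuring that the bound $\re L\geq c'I$ and the ensuing Lipschitz inverse behave well enough as $\gl\to\infty$ to make the passivity estimates work uniformly in \textup{(b)} and \textup{(c)}, and, for \textup{(f)}, identifying the hypothesised solutions of \eqref{eq:NLFBStateandOutput} with generalised solutions in the sense of Definition~\ref{def:SysNodeStates} without invoking Assumption~\ref{ass:PhiAss}.
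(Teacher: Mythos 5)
Your treatment of cases (a)--(d) is correct and coincides with the paper's proof in all essentials: the paper also fixes $\gl>0$, solves $v=Cz+P(\gl)\phi(-v)$ by a monotonicity argument (its Lemma~\ref{lem:PhiProps}(a), proved via $m$-dissipativity of continuous monotone maps rather than Browder--Minty, but to the same effect and with the same Lipschitz inverse), characterises $\Dom(A_\phi)$ as $\{z+(\gl-A)\inv B\phi(-g_\gl(Cz)):z\in\Dom(A)\}$, and then runs exactly your translation argument for (a), your $\norm{(\gl-A)\inv B}\to 0$ arguments for (b) and (c) including the passivity estimate $\gl\norm{(\gl-A)\inv Bu}^2\le\re\iprod{P(\gl)u}{u}$, and your observation $\Dom(A)\subset\Dom(A_\phi)$ for (d).

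Cases (e) and (f), however, contain a genuine gap. You argue that the generalised solution with $u\equiv 0$ supplied by Theorem~\ref{thm:LPSolutionsMain} is a limit of classical solutions ``$(x^k,0,y^k)$'' and that therefore $x^k(0)\in\Dom(A_\phi)$. But Definition~\ref{def:SysNodeStates} only requires the approximating classical solutions $(x^k,u^k,y^k)$ to satisfy $\Pt[\tau]u^k\to 0$ in $\L^2(0,\tau;U)$; it does not make $u^k\equiv 0$, and the construction in the proof of Theorem~\ref{thm:LPSolutionsMain} produces $u^k\in\H^1(0,\infty;U)$ coupled to $x^k(0)$ through the compatibility condition $(x^k(0),u^k(0))^\top\in\Dcomp$, with no control on $u^k(0)$ (convergence in $\L^2$ says nothing about the value at $t=0$). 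A classical solution with input identically zero would require $x^k(0)\in\Dom(A_\phi)$ from the outset, so producing a sequence of such solutions converging to an arbitrary $x_0\in X$ is precisely the density statement you are trying to prove --- the argument is circular. The same objection applies to your sketch for (f), where in addition the hypothesis only gives the mild identities \eqref{eq:NLFBStateandOutput}, not classical solutions. The paper closes this gap by a different mechanism: it shows that the solution maps $x_0\mapsto x(t)$ of \eqref{eq:NLFBStateandOutput} form a nonlinear contraction semigroup $\T^0$ defined on all of $X$, that $\T^0$ agrees on $\Dom(A_\phi)$ with the semigroup generated by $A_\phi$, and that the infinitesimal generator of $\T^0$ is therefore a dissipative extension of the \emph{maximally} dissipative operator $A_\phi$, hence equal to it; density of $\Dom(A_\phi)$ then follows from the nonlinear semigroup theory (the generator of a contraction semigroup on $X$ has dense domain). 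Case (e) is then reduced to (f) by noting that Assumption~\ref{ass:PhiAss} forces $A_\phi$ to be single-valued and that Theorem~\ref{thm:LPSolutionsMain} supplies the required solutions. Some argument of this maximality type (or another way of relating arbitrary initial states to $\Dom(A_\phi)$) is needed; the approximation-of-initial-states route as you describe it does not go through.
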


\begin{remark}
\label{rem:nACPremAphiSingleValued}
The proof of Theorem~\ref{thm:nonlinACPsol} will show that the assumption that $\re \iprod{\phi(u_2)-\phi(u_1)}{u_2-u_1}> 0$ whenever $\phi(u_1)\neq \phi(u_2)$ in \cref{thm:nonlinACPsol} can be replaced with the assumption that $A_\phi$ is single-valued or, 
equivalently, that the element $v$ in the definition of $\Dom(A_\phi)$ is unique. This is in particular true if the input operator $B$ of the system node is \emph{completely unbounded} in the sense that $\ran(B)\cap X=\{0\}$. Indeed, if $x\in \Dom(A_\phi)$ and if $v_1,v_2\in U$ are two elements satisfying the conditions of ``$v$" in $\Dom(A_\phi)$, then $X\ni Ax+B\phi(-v_2)-(Ax+B\phi(-v_1))=B(\phi(-v_2)-\phi(-v_1))$. This implies that $\phi(-v_1)=\phi(-v_2)$, and further that $v_1=v_2$, finally showing that $A_\phi$ is single-valued.
\end{remark}

\subsection{Proofs of the Well-Posedness Results}
\label{sec:WPproofs}

In the beginning of the proof of Theorem~\ref{thm:LPSolutionsMain} we will show that 
if $\phi : U\to U$ satisfies Assumption~\ref{ass:PhiAss} for some $\phicoeff>0$, we can reduce our analysis to the case where $\phicoeff=1$ with suitable rescaling. 
More precisely, if $\phi$ satisfies Assumption~\ref{ass:PhiAss} for some $\phicoeff>0$, then the scaled function $\phi_\phicoeff := \sqrt{\phicoeff} \phi(\sqrt{\phicoeff} \cdot)$ satisfies it with $\phicoeff=1$.
Because of this, in \cref{lem:PhiProps} and \cref{prp:LPmaxDissipative} we may also restrict our attention to this case. 
More general versions of these two results can be obtained by applying them to the function $\phi_\phicoeff$ and the impedance passive system node $S_\phicoeff = \pmatsmall{1&0\\0&1/\sqrt{\phicoeff}}S\pmatsmall{1&0\\0&1/\sqrt{\phicoeff}}$.

\begin{lemma}
\label{lem:PhiProps}
Let $U$ be a Hilbert space and $\phi:U\to U$.
 Then the following hold.
\begin{itemize}
\item[\textup{(a)}]
Suppose that $\phi$ is continuous and monotone 
and that 
$Q\in \Lin(U)$ satisfies $\re Q\geq cI$ for some $c>0$.
For all 
$u,r\in U$ the equation $y=r+Q\phi(u-y)$ has a unique solution which is determined by $y=g(u,r)$, where $g:U\times U\to U$ is a globally Lipschitz continuous function.
\item[\textup{(b)}] 
The function $\phi$ satisfies
 Assumption~\textup{\ref{ass:PhiAss}} with $\phicoeff=1$ if and only if
\eq{
2\re \iprod{\phi(v_2-y_2)-\phi(v_1-y_1)}{y_2-y_1}\leq  \norm{v_2-v_1}^2-\norm{\phi(v_2-y_2)-\phi(v_1-y_1)}^2
}
for all $v_1,v_2,y_1,y_2\in U$.
\end{itemize}
\end{lemma}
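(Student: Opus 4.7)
For part (a), the plan is to substitute $z := u - y$ and $w := u - r$, reducing the equation $y = r + Q\phi(u-y)$ to $F(z) := z + Q\phi(z) = w$. First I would note that $\re Q \geq c I$ forces $\norm{Qz}\norm{z} \geq \re\iprod{Qz}{z} \geq c\norm{z}^2$ (and likewise for $Q^*$), so $Q$ is boundedly invertible with $\norm{Q\inv} \leq 1/c$, and the identity $\re(Q\inv) = (Q^*)\inv(\re Q)Q\inv$ gives the quantitative lower bound $\re Q\inv \geq (c/\norm{Q}^2)I$. Left-multiplying $F(z) = w$ by $Q\inv$ yields the equivalent equation $\tilde F(z) := Q\inv z + \phi(z) = Q\inv w$, where $\tilde F$ is continuous (since $\phi$ is), and combining the strong positivity of $\re Q\inv$ with monotonicity of $\phi$ gives the strong monotonicity
\[
\re\iprod{\tilde F(z_1) - \tilde F(z_2)}{z_1 - z_2} \;\geq\; \frac{c}{\norm{Q}^2}\,\norm{z_1-z_2}^2.
\]
By the Browder--Minty theorem (equivalently, a continuous monotone operator defined everywhere on a Hilbert space is maximal monotone, and a coercive maximal monotone operator is surjective), applied to $U$ viewed as a real Hilbert space, $\tilde F$ is a bijection of $U$. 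Hence for every $w \in U$ there is a unique solution $z(w)$, and pairing $\tilde F(z(w_1)) - \tilde F(z(w_2)) = Q\inv(w_1 - w_2)$ with $z(w_1) - z(w_2)$ together with $\norm{Q\inv} \leq 1/c$ yields $\norm{z(w_1) - z(w_2)} \leq (\norm{Q}^2/c^2)\norm{w_1 - w_2}$. Setting $g(u,r) := u - z(u - r)$ then produces the unique globally Lipschitz solution map of part (a).

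For part (b), both directions are short algebraic verifications. For $(\Rightarrow)$, given $v_1, v_2, y_1, y_2 \in U$, set $u_i := v_i - y_i$ and $a := \phi(u_2) - \phi(u_1)$. Assumption~\ref{ass:PhiAss} with $\kappa = 1$ combined with $u_2 - u_1 = (v_2-v_1) - (y_2-y_1)$ gives $\norm{a}^2 \leq \re\iprod{a}{v_2-v_1} - \re\iprod{a}{y_2-y_1}$; doubling and applying the elementary bound $2\re\iprod{a}{v_2-v_1} \leq \norm{a}^2 + \norm{v_2-v_1}^2$ yields the claimed inequality. For $(\Leftarrow)$, given arbitrary $u_1, u_2 \in U$ and $a := \phi(u_2) - \phi(u_1)$, choose $v_1 := 0$, $v_2 := a$, $y_1 := -u_1$, $y_2 := a - u_2$; then $v_i - y_i = u_i$ (so $\phi(v_i - y_i) = \phi(u_i)$) and $y_2 - y_1 = a - (u_2 - u_1)$, so the hypothesis reduces to $2\norm{a}^2 - 2\re\iprod{a}{u_2-u_1} \leq \norm{a}^2 - \norm{a}^2 = 0$, which is exactly Assumption~\ref{ass:PhiAss} with $\kappa = 1$.

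The main obstacle is the existence claim in part (a): because $\phi$ is only continuous (not Lipschitz), no Banach fixed-point argument is available, so one must genuinely invoke monotone-operator machinery (Browder--Minty or an equivalent surjectivity statement for strongly monotone continuous maps on a Hilbert space). Everything else---invertibility of $Q$, the lower bound on $\re Q\inv$, uniqueness, and Lipschitz dependence---follows by routine estimates, and part (b) is a purely algebraic verification.
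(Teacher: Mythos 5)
Your proof is correct. Part (b) matches the paper's argument up to reorganization: the paper completes the square, showing the relevant expression equals $\norm{w-\Delta}^2+2\bigl[\re\iprod{\Delta}{u_2-u_1}-\norm{\Delta}^2\bigr]$ and reading off the equivalence in one stroke, whereas you split the two implications, using $2\re\iprod{a}{w}\le\norm{a}^2+\norm{w}^2$ one way and the extremal choice $w=a$ the other; this is the same computation. In part (a) you take a genuinely more direct route. The paper also reduces to $Q\inv z+\phi(z)=Q\inv v$ and establishes $\re Q\inv\ge c\norm{Q}^{-2}I$, but then decomposes $Q\inv=S+J$ into self-adjoint and skew-adjoint parts and conjugates by $S^{1/2}$ so as to land on an equation of the form $\tilde z+\psi(\tilde z)=S^{-1/2}Q\inv v$ with $\psi$ continuous and monotone, to which it applies the Hilbert-space facts that a continuous dissipative operator is maximally dissipative, hence $m$-dissipative, so that $\ran(I+\psi)=U$ and $(I+\psi)\inv$ is a contraction. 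You instead observe that $Q\inv+\phi$ is continuous and strongly monotone with constant $c/\norm{Q}^2$ and invoke the Browder--Minty surjectivity theorem on $U$ viewed as a real Hilbert space, obtaining uniqueness and the explicit Lipschitz bound $\norm{Q}^2/c^2$ directly from strong monotonicity. Both arguments ultimately rest on the same surjectivity theory for continuous monotone maps on a Hilbert space; yours avoids the square-root symmetrization and produces a quantitative Lipschitz constant, while the paper's extra reduction allows it to cite only the elementary range result for $I+\psi$ from Miyadera's book rather than a general surjectivity theorem.
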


\begin{proof}
To prove part (a),
assume that
 $\re Q\geq cI$ for some $c>0$.
Defining $z=u-y$ and $v = u-r$  the equation $y=r+Q\phi(u-y)$ becomes $z+Q\phi(z)=v$.
We have from~\citel{Pau19}{Lem.~A.1(a)} that $Q$ is invertible and $\re Q\inv \geq c \norm{Q}^{-2}I$.
We can therefore write $Q\inv = S+J$, where $S$ is self-adjoint and $ S\geq c\norm{Q}^{-2}I$ and $J^\ast = -J$.
Then 
\eq{
z+Q\phi(z)= v 
\qquad 
\Leftrightarrow
\qquad 
Sz + Jz+\phi(z)= Q\inv v .
}
If we denote $\varphi(z)=Jz+\phi(z)$, then the monotonicity of $\phi$ and $J^\ast = -J$ imply that $\varphi$ is monotone as well.
As the next step we can note that 
\eq{
z+Q\phi(z)= v 
\qquad 
&\Leftrightarrow
\qquad 
S^{1/2} z + S^{-1/2}\varphi(S^{-1/2}S^{1/2} z)= S^{-1/2}Q\inv v \\
&\Leftrightarrow
\qquad 
\tilde z + \psi(\tilde z)= S^{-1/2}Q\inv v ,
}
 where we have denoted $\tilde z=S^{1/2}z$ and $ \psi(\tilde z)=S^{-1/2}\varphi(S^{-1/2} \tilde z)$.
Since $S^{-1/2}$ is self-adjoint, we have 
$\re \iprod{\psi(z_2)-\psi(z_1)}{z_2-z_1}  \geq 0 $ for $z_1,z_2\in U$.
Thus the continuous function 
$- \psi:U\to U$ defines a
(single-valued) maximally dissipative operator 
and since $U$ is a Hilbert space, this operator is $m$-dissipative by~\citel{Miy92book}{Cor.~2.27}.
Because of this, $\ran (I+ \psi)=U$ and by~\citel{Miy92book}{Cor.~2.10} the equation 
$\tilde z + \psi(\tilde z)= S^{-1/2}Q\inv v$ has a unique solution for every $v\in U$. 
Thus $ y=u-z 
=u-S^{-1/2}(I+ \psi)\inv(S^{-1/2}Q\inv (u-r))$.
We have from~\citel{Miy92book}{Cor.~2.10} that $(I+ \psi)\inv$ is a contraction, and thus in particular globally Lipschitz continuous. This further implies that the mapping $(u,r)\mapsto y$ is globally Lipschitz continuous.

It remains to prove part (b). 
 If we denote $w=v_2-v_1$, $u_1=v_1-y_1$ and $u_2=v_2-y_2$, then
$y_2-y_1 = v_2-u_2 +u_1-v_1 = w-u_2+u_1$, and 
we can see that 
 the estimate in (b) 
is equivalent to the property that
\eq{
\norm{w}^2
-2\re \iprod{\phi(u_2)-\phi(u_1)}{w-u_2+u_1}
-\norm{\phi(u_2)-\phi(u_1)}^2
\geq  0
}
for all $w,u_1,u_2\in U$.
Denoting $\gd = u_2-u_1$ and $\Delta = \phi(u_2)-\phi(u_1)$ for brevity,
the expression on the left-hand side of the above inequality can be rewritten as 
\eq{
\MoveEqLeft\norm{w}^2
-2\re \iprod{\phi(u_2)-\phi(u_1)}{w-u_2+u_1}
-\norm{\phi(u_2)-\phi(u_1)}^2\\
&=
\norm{w}^2 -2\re \iprod{\Delta}{w} +2\re \iprod{\Delta}{\gd} -\norm{\Delta}^2\\
&=\norm{w-\Delta}^2  +2\left[\re \iprod{\Delta}{\gd} -\norm{\Delta}^2\right].
}
This expression is non-negative for all $w,u_1,u_2\in U$ if and only if $\re \iprod{\Delta}{\gd}\geq \norm{\Delta}^2 $ for all $u_1,u_2\in U$, which is exactly the condition in \cref{ass:PhiAss}.
\end{proof}

We recall that a possibly nonlinear single-valued operator $A: \Dom(A)\subset X\to X$ on a Hilbert space $X$ is called \emph{dissipative} if $\re \iprod{A(x_2)-A(x_1)}{x_2-x_1}\leq 0$ for all $x_1,x_2\in \Dom(A)$~\citel{Miy92book}{Def.~2.4}. Such a dissipative operator $A$ is \emph{maximally dissipative} if $A$ does not have a proper (possibly multi-valued) dissipative extensions~\citel{Miy92book}{Def.~2.5}. Since $X$ is a Hilbert space, a dissipative operator $A$ is maximally dissipative if and only if it is $m$-dissipative, i.e., $\ran(\gl-A)=X$ for one/all $\gl>0$~\citel{Miy92book}{Lem.~2.12--2.13 \& Cor.~2.27}.
We will now define the generator of a so-called \emph{Lax--Phillips semigroup}
associated to our system~\eqref{eq:SysNodeSysMain}. It is worthwile to note that 
in this construction we consider~\eqref{eq:SysNodeSysMain} to be equipped with the output $\phi(u(t)-y(t))$.
\begin{definition}
\label{def:LPgen}
Let $S$ be an impedance passive system node on the Hilbert spaces $(U,X,U)$ and let $\phi:U\to U$. Define $\XLP:=\L^2(-\infty,0;U)\times X\times \L^2(0,\infty;U)$ and define the operator
 $\mc{A}: \Dom(\mc{A})\subset \XLP \to \XLP$
by
\eq{
&\Dom(\mc{A}) = \Bigl\{ (y,x,u)^\top\in \H^1(-\infty,0;U)\times X\times \H^1(0,\infty;U)\ \Bigm| \exists v\in U \mbox{ s.t. }
\\
& \hspace{1cm}
Ax + B\phi(u(0)- v)\in X, \  
v = C\& D \pmatsmall{x\\\phi(u(0)-v)}, \mbox{ and }
y(0)=\phi(u(0)-v)
 \Bigr\}\\
&\mc{A}\bigl( \pmatsmall{y\\x\\u}\bigr) = \pmat{  y'\\Ax + B\phi(u(0)- v(u(0),x))\\  u'},
\qquad (y,x,u)^\top \in \Dom(\mc A),
}
where $v(u(0),x)$ is the element $v$ in the definition of $\Dom(\mc{A})$. 
\end{definition}

\begin{proposition}
\label{prp:LPmaxDissipative}
Suppose that $S$ is an impedance passive system node $S$ on $(U,X,U)$
whose transfer function satisfies $\re P(\gl)\ge c_\gl I$ for some $\gl,c_\gl>0$ and
that the function $\phi:U\to U$ satisfies Assumption~\textup{\ref{ass:PhiAss}} with $\phicoeff=1$.
Then $\mc{A}$ in Definition~\textup{\ref{def:LPgen}} is a single-valued, densely defined and $m$-dissipative (nonlinear) operator.
\end{proposition}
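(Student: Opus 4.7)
The plan is to verify in turn that $\mc{A}$ is single-valued, dissipative, satisfies the range condition $\ran(\mu I-\mc{A})=\XLP$ (whence $m$-dissipativity), and finally densely defined. As noted just before the proposition, by rescaling one may assume $\kappa=1$, and I fix a $\mu>0$ from the hypothesis with $\re P(\mu)\geq c_\mu I$.

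For single-valuedness, I would suppose $v_1,v_2\in U$ both satisfy the domain conditions at some $(y,x,u)\in\Dom(\mc{A})$ and set $w_i:=\phi(u(0)-v_i)$. Linearity of $\Dom(S)$ and $S$ gives $(0,w_2-w_1)^\top\in\Dom(S)$ and $C\&D\pmatsmall{0\\ w_2-w_1}=v_2-v_1$; impedance passivity at this element yields $0\le\re\iprod{v_2-v_1}{w_2-w_1}$, while Assumption~\ref{ass:PhiAss} with $\kappa=1$ gives the opposite bound $\re\iprod{v_2-v_1}{w_2-w_1}\le -\norm{w_2-w_1}^2$, whence $w_1=w_2$ and then $v_2-v_1=C\&D\pmatsmall{0\\0}=0$. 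For dissipativity I would decompose $\re\iprod{\mc{A}\xi_2-\mc{A}\xi_1}{\xi_2-\xi_1}_{\XLP}$ into three components: the identity $\int\re\iprod{f'}{f}\,\d t=\tfrac12[\norm{f}^2]$ on $\H^1$ functions contributes $\tfrac12\norm{w_2-w_1}^2$ from $(-\infty,0)$ and $-\tfrac12\norm{u_2(0)-u_1(0)}^2$ from $(0,\infty)$; impedance passivity applied to $(x_2-x_1,w_2-w_1)^\top\in\Dom(S)$ bounds the middle term by $\re\iprod{v_2-v_1}{w_2-w_1}$; and Lemma~\ref{lem:PhiProps}(b), with the substitutions $v_i\mapsto u_i(0),\ y_i\mapsto v_i$, shows this is at most $\tfrac12\norm{u_2(0)-u_1(0)}^2-\tfrac12\norm{w_2-w_1}^2$. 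Summing gives $\le 0$.

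For the range condition I would solve $(\mu I-\mc{A})(y,x,u)=(y^*,x^*,u^*)$ component by component. The ODE $\mu u-u'=u^*$ has the unique $\L^2$-decaying solution $u(t)=\int_t^\infty e^{\mu(t-s)}u^*(s)\,\d s\in\H^1(0,\infty;U)$, which fixes $u(0)$. For the middle equation $\mu x-Ax-Bw=x^*$ with $(x,w)^\top\in\Dom(S)$, the substitution $z:=x-(\mu-A)\inv Bw$ forces $z=(\mu-A)\inv x^*\in\Dom(A)$, and the identity $v=C\&D\pmatsmall{x\\w}=Cz+P(\mu)w$ reduces the coupling $w=\phi(u(0)-v)$ to the $U$-valued equation $v=r+P(\mu)\phi(u(0)-v)$ with $r=C(\mu-A)\inv x^*$, uniquely solvable by Lemma~\ref{lem:PhiProps}(a). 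This determines $w$ and $x=(\mu-A)\inv(x^*+Bw)\in X$. Finally $\mu y-y'=y^*$ is solved in $\H^1(-\infty,0;U)$ with the prescribed trace $y(0):=w$, completing the triple $(y,x,u)\in\Dom(\mc{A})$.

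Density of $\Dom(\mc{A})$ is the step I expect to demand the most care, precisely because Assumption~\ref{ass:PhiAss} does not ensure $0\in\ran(\phi)$. My plan is to pick any $c\in\ran(\phi)$ (nonempty since $\phi(0)\in\ran(\phi)$) and any $c'\in U$ with $\phi(c')=c$. For given $(y^*,x^*,u^*)\in\XLP$, take $z_n\in\Dom(A)$ with $z_n\to x^*-(\mu-A)\inv Bc$ in $X$ and set $x_n:=z_n+(\mu-A)\inv Bc\to x^*$, $v_n:=Cz_n+P(\mu)c$, with prescribed trace values $u_n(0):=v_n+c'$ and $y_n(0):=c$; the identity $\phi(u_n(0)-v_n)=\phi(c')=c$ then realises the domain conditions with $w_n=c$. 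It remains to build $u_n\in\H^1(0,\infty;U)$ and $y_n\in\H^1(-\infty,0;U)$ approximating $u^*$ and $y^*$ in $\L^2$ while matching the prescribed traces, by adding to $\L^2$-approximations with trace $0$ cutoff terms such as $(v_n+c')\chi_n$ whose $\L^2$-norm is chosen small enough to absorb the possibly growing $|v_n|$ (e.g.\ $\norm{\chi_n}_{\L^2}\le 1/(n(|v_n|+1))$). Then $(y_n,x_n,u_n)\in\Dom(\mc{A})$ converges to $(y^*,x^*,u^*)$ in $\XLP$, establishing density.
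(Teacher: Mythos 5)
Your proposal is correct and follows essentially the same route as the paper's proof: single-valuedness and dissipativity via impedance passivity combined with Assumption~\ref{ass:PhiAss} and Lemma~\ref{lem:PhiProps}(b), the range condition by solving the three components separately with Lemma~\ref{lem:PhiProps}(a) handling the coupling equation $v=r+P(\mu)\phi(u(0)-v)$, and density via a translate of $\Dom(A)$ with prescribed traces. The only cosmetic difference is that the paper fixes $c=\phi(0)$ (so $c'=0$) in the density step rather than an arbitrary element of $\ran(\phi)$, and leaves the trace-matching approximation implicit.
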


\begin{proof}
 We first note that the property $Ax + B\phi(u(0)- v)\in X$ in the definition of $\Dom(\mc{A})$  implies that
$(x,\phi(u(0)-v))^\top\in \Dom(S)=\Dom(C\&D)$.
We begin by showing that $\Dom(\mc{A})$ is dense in $\XLP$.
To this end, let $\lambda>0$
and denote $R_\gl = (\gl-A)\inv$. 
If we let $x_0\in \Dom(A)$ and define $x=x_0+R_\gl B\phi(0)$ and $v=Cx_0 + P(\gl)\phi(0)$, then
$Ax+B\phi(0)=Ax_0 +\gl R_\gl B\phi(0)\in X$ and the properties of the system node $S$ imply that
 $C\& D \pmatsmall{x\\\phi(0)}= Cx_0+P(\gl)\phi(0)=v$.
 This means that if we define $D_0:=\{x_0+R_\gl B\phi(0)\,|\  x_0\in \Dom(A)\}$ and $\DLP:=\{(y,x,u)^\top\in \H^1(-\infty,0;U)\times D_0 \times \H^1(0,\infty;U)\;|\; u(0)= C\&D \pmatsmall{x\\\phi(0)}, y(0)=\phi(0)\}$, then we have $\DLP\subset \Dom(\mc{A})\subset \XLP$. 
 But since $D_0$ is dense in $X$, it is easy to see that $\DLP$ is dense in $\XLP$. This further implies that $\Dom(\mc A)$ is dense in $\XLP$.

We will now show that $\mc{A}$ is a single-valued operator.
This is clearly true if the element $v$ in the definition of $\Dom(\mc{A})$ is unique\footnote{\cref{lem:PhiProps}(b) can be used to show that also the converse holds.}.
Let $(y,x,u)^\top\in \Dom(\mc{A})$
and let $v_1\in U$ and $v_2\in U$ be two elements such that 
\eq{
Ax + B\phi(u(0)- v_k)\in X \mbox{ and } v_k = C\& D \pmatsmall{x\\\phi(u(0)-v_k)}, \qquad k=1,2.
}
If we denote 
$\phi_1 := \phi(u(0)-v_1)$ and
$\phi_2 := \phi(u(0)-v_2)$  for brevity, then the first equations imply $X\ni Ax+B\phi_2 - (Ax+B\phi_1) = A0+ B(\phi_2-\phi_1)$, and thus $(0,\phi_2-\phi_1)^\top\in \Dom(S)$. 
Moreover, $C\&D \pmatsmall{0\\\phi_2-\phi_1} = C\& D \bigl(\pmatsmall{x\\\phi_2}-\pmatsmall{x\\\phi_1}\bigr)= v_2-v_1 $.
The impedance passivity of $S$ and Assumption~\ref{ass:PhiAss} imply that
\eq{
0 &= \re\iprod{A\&B \pmatsmall{0\\\phi_2-\phi_1}}{0}_X
\geq  \re\iprod{C\&D \pmatsmall{0\\\phi_2-\phi_1}}{\phi_1-\phi_2}_U\\
& = \re\iprod{v_2-v_1}{\phi_1-\phi_2}_U\\
&= \re\iprod{u(0)-v_1 - (u(0)-v_2)}{\phi(u(0)-v_1)-\phi(u(0)-v_2)}_U\\
&\geq \norm{\phi(u(0)-v_1)-\phi(u(0)-v_2)}^2.
}
Thus $\phi_2=\phi_1$, which implies $v_2 = C\& D \pmatsmall{x\\\phi_2}=C\& D \pmatsmall{x\\\phi_1} = v_1$.
Thus the element $v$ in the definition of $\Dom(\mc{A})$ is unique and $\mc{A}$ is single-valued.

To show that $\mc{A}$ is dissipative, let 
$(y_1,x_1,u_1)^\top,(y_2,x_2,u_2)^\top\in \Dom(\mc{A})$. 
Denote $v_1:=v(u_1(0),x_1)$, $v_2:= v(u_2(0),x_2)$
$\phi_1 := \phi(u_1(0)-v(u_1(0),x_1))$, and
$\phi_2 := \phi(u_2(0)-v(u_2(0),x_2))$ 
 for brevity.
Using the impedance passivity of the system node $S$,~\citel{FarWeg16}{Thm.~5}, and
 Lemma~\ref{lem:PhiProps}(b) we can estimate
\begin{align*}
&\re \Bigl\langle\mc{A}\bigl( \pmatsmall{y_2 \\ x_2\\u_2}\bigr)-\mc{A}\bigl( \pmatsmall{y_1\\ x_1\\u_1}\bigr), \pmatsmall{y_2\\x_2\\u_2}- \pmatsmall{y_1\\x_1\\u_1}\Bigr\rangle\\
&= \re \iprod{ y_2' -  y_1'}{y_2-y_1}_{\L^2(-\infty,0;U)} + \re \iprod{Ax_2+B\phi_2-Ax_1 - B\phi_1}{x_2-x_1}_X\\
&\quad + \re \iprod{ u_2' -  u_1'}{u_2-u_1}_{\L^2(0,\infty;U)}\\
&=
\frac{1}{2}\int_{-\infty}^0 \frac{\d}{\d t} \left\Vert y_2(t)-y_1(t)\right\Vert_U^2 \d t
+ \re \iprod{A(x_2-x_1)+B(\phi_2-\phi_1)}{x_2-x_1}_X \\
&\quad + \frac{1}{2}\int_0^\infty \frac{\d}{\d t} \left\Vert u_2(t)-u_1(t)\right\Vert_U^2 \d t
\\
&\leq 
\frac{1}{2}\left\Vert y_2(0)-y_1(0)\right\Vert^2+ \re \iprod{C\& D\pmatsmall{x_2-x_1\\\phi_2-\phi_1}}{\phi_2-\phi_1}_X-\frac{1}{2}\left\Vert u_2(0)-u_1(0)\right\Vert_U^2\\
&=
\frac{1}{2}\left\Vert\phi(u_2(0)-v_2)-\phi(u_1(0)-v_1)\right\Vert_U^2-\frac{1}{2}\left\Vert u_2(0)-u_1(0)\right\Vert_U^2\\
&\quad + \re \iprod{v_2-v_1}{\phi(u_2(0)-v_2)-\phi(u_1(0)-v_1)}_U
 \leq 0.
\end{align*}
This completes the proof that $\mc{A}$ is dissipative.

In the last part of the proof, we will show that $\mc{A}$ is $m$-dissipative. By~\citel{Miy92book}{Lem.~2.13} it suffices to show that
 $\ran(\lambda-\mc{A})=\XLP
 $ for some $\lambda > 0$.
To this end, let 
$(y_1,x_1,u_1)^\top\in \XLP$
be arbitrary and let $\lambda>0$ be such that $\re P(\lambda)\geq c_\lambda I$ for some $c_\gl>0$.
If we define 
$u: \zinf \to U$
 by  $u(t) = \int_{ t}^\infty e^{\lambda(t-s)}u_1(s)\d s $, then similarly as in~\citel{StaWei02}{Prop.~6.4} we have $u\in \H^1(0,\infty;U)$ and $\lambda u- u'=u_1$.
In particular, we have $u(0)=\hat u_1(\lambda)\in U$.
The definition of $\mc{A}$ shows that we have
$(y,x,u)^\top\in \Dom(\mc{A})$ and
 $(\lambda-\mc{A})\pmatsmall{y\\x\\u}=\pmatsmall{y_1\\x_1\\u_1}$ if 
$y\in \H^1(-\infty,0;U)$,
$x\in X$ and $v\in U$ are such that
\eqn{
\label{eq:LPmaxequations}
\begin{cases}
\lambda y- y' = y_1, \qquad \mbox{on }(-\infty,0)\\
y(0) = \phi (\hat u_1(\lambda)-v)\\
(\lambda-A)x - B\phi(\hat u_1(\lambda)-v) = x_1\\
v = C\& D \pmatsmall{x\\ \phi(\hat u_1(\lambda)-v)}.
\end{cases}
}
To solve equations~\eqref{eq:LPmaxequations} 
we denote $R_\lambda:=(\lambda-A)\inv$ for brevity
and note that by Lemma~\ref{lem:PhiProps}(a)
we can define $v$ as the (unique) element of $U$ such that
\begin{equation*}
        v = CR_\lambda x_1 + P(\lambda)\phi(\hat u_1(\lambda)-v).
\end{equation*}
If we define $x = R_\lambda x_1 + R_\lambda B\phi(\hat u_1(\lambda)-v)$, 
then $(\lambda-A)x - B\phi(\hat u_1(\lambda)-v) = x_1$ in~\eqref{eq:LPmaxequations}, and
a direct computation 
using the properties of system nodes
shows that
\eq{
\MoveEqLeft C\& D \pmat{x\\ \phi(\hat u_1(\lambda)-v)}
=C\& D \pmat{R_\lambda x_1 + R_\lambda B \phi(\hat u_1(\lambda)-v)\\ \phi(\hat u_1(\lambda)-v)}\\
&= CR_\lambda x_1 + P(\lambda)\phi(\hat u_1(\lambda)-v)
= v.}
Finally, we define $y$ as the solution of the boundary value problem
\eq{
&\lambda y(t)- y'(t) = y_1(t), \qquad t\in(-\infty,0)\\
&y(0) = \phi (\hat u_1(\lambda)-v).
}
Since $y_1\in \L^2(-\infty,0;U)$, we have 
$
y(t) = e^{\lambda t} \phi(\hat u_1(\lambda)-v) + \int_t^0 e^{\lambda (t-s)}y_1(s)\d s
$
for $t\le 0$,
and $y\in \H^1(-\infty,0;U)$ and $y(0)=\phi(\hat u_1(\lambda)-v)$, as required.
Since 
$(y_1,x_1,u_1)^\top\in \XLP$
was arbitrary, we have shown that 
$\ran (\lambda-\mc{A})=\XLP$,
and thus $\mc{A}$ is $m$-dissipative.
\end{proof}

\begin{remark}
\label{rem:PhiMinimality}
The proof of Proposition~\textup{\ref{prp:LPmaxDissipative}} shows that the condition for $\phi$ in Assumption~\textup{\ref{ass:PhiAss}} is in a certain sense \emph{minimal} for  the dissipativity of $\mc{A}$ and, consequently, also for the estimate~\eqref{eq:ContractivityEstim} in Theorem~\textup{\ref{thm:LPSolutionsMain}}.
 Indeed, if $S$ is \emph{impedance energy preserving}, then $\re \iprod{Ax+Bu}{x}= \re\iprod{C\&D \pmatsmall{x\\ u}}{u}_U$ for all $(x,u)^\top\in \Dom(S)$ and  Lemma~\textup{\ref{lem:PhiProps}} and the estimates in the proof of Proposition~\textup{\ref{prp:LPmaxDissipative}(b)} imply that $\mc{A}$ is dissipative if and only if $\phi$ satisfies Assumption~\textup{\ref{ass:PhiAss}}.
\end{remark}

We are now in a position to prove \cref{thm:LPSolutionsMain}.

\begin{proof}[Proof of Theorem~\textup{\ref{thm:LPSolutionsMain}}]
We begin by noting that if we define $\tilde u = \phicoeff^{-1/2}u$ and $\tilde y = \phicoeff^{-1/2}y$, then~\eqref{eq:SysNodeSysMain} can be rewritten as
\eq{
\left[\begin{matrix}\dot{x}(t)\\\tilde y(t)\end{matrix}\right] = S_\phicoeff\left[\begin{matrix}x(t)\\\phi_\phicoeff(\tilde u(t)-\tilde  y(t))\end{matrix}\right], \qquad t\ge 0,
}
where $\phi_\phicoeff:=\sqrt{\phicoeff}\phi(\sqrt{\phicoeff}\cdot): U\to U$  
and where
\eq{
S_\phicoeff&= \pmat{1&0\\0&\phicoeff^{-1/2}}\pmat{A\&B\\ C\&D}\pmat{1&0\\0&\phicoeff^{-1/2}}, \\
D(S_\phicoeff) 
 &= \left\{(x,u)^\top\in X\times U \ \middle| \  Ax + \phicoeff^{-1/2} Bu\in X\right\}
}
is an impedance passive system node on the spaces $(U,X,U)$.
The function $\phi_\phicoeff $ satisfies Assumption~\ref{ass:PhiAss} with ``$\phicoeff=1$''.
It is straightforward to check that the claims of the theorem for the system node $S_\phicoeff$, the associated well-posed linear system $\Sigma_\phicoeff$, and
 $\phi_k$ with $\phicoeff=1$ immediately imply the original claims for $S$, $\Sigma$, and $\phi$.
Because of this, we can without loss of generality assume that $\phicoeff=1$.

We next note that it suffices to prove the claims in the situation where $u\in \L^2(0,\infty;U)$ or $u\in \H^1(0,\infty;U)$ instead of $u\in \Lploc[2](0,\infty;U)$ or $u\in \Hloc[1](0,\infty;U)$, respectively. Indeed, if $u\in \Lploc[2](0,\infty;U)$ (resp. $u\in \Hloc[1](0,\infty;U)$), for an arbitrary $T>0$ we can define 
$u_T\in \L^2(0,\infty;U)$ (resp. $u_T\in \H^1(0,\infty;U)$) so that $u$ and $u_T$ coincide on $[0,T]$ and $u_T$ vanishes on $\zinf[T+1]$.
Once we prove the claims of Theorem~\ref{thm:LPSolutionsMain} for such truncated input functions, we establish the existence of classical or generalised solutions $(x_T,u_T,y_T)$ satisfying $x_T(0)=x_0$. 
Moreover, by Remark~\ref{rem:SysNodeWPIncrEstimate} the state trajectory $x_T$ and output $y_T$ on $[0,T]$ are uniquely determined by $x_0$ and $u_T$ on $[0,T]$. 
Because of this we can define $x\in C(\zinf;X)$ and $y\in \Lploc[2](0,\infty;U)$ such that
 $x$ coincides with $x_T$ and $y$ coincides with $y_T$ on $[0,T]$ for every $T>0$.
 It is straightforward to deduce from \cref{def:SysNodeStates} and the properties of the solutions $(x_T,u_T,y_T)$ for $T>0$ that $(x,u,y)$ is a 
generalised solution of~\eqref{eq:SysNodeSysMain} whenever $x_0\in X$ and $u\in \Lploc[2](0,\infty;U)$ and that it is a
classical solution of~\eqref{eq:SysNodeSysMain} 
and $y\in \Hloc[1](0,\infty;U)$
whenever $u\in \Hloc[1](0,\infty;U)$ and  $(x_0,u(0))^\top\in \Dcomp$. Finally, the formulas~\eqref{eq:LPSemigStateAndOutput} and the estimate~\eqref{eq:ContractivityEstim} for two solutions follow directly from the corresponding properties of the solutions $(x_T,u_T,y_T)$ for $T>0$.

Proposition~\ref{prp:LPmaxDissipative} shows that $\mc{A}$ is single-valued, densely defined, and $m$-dissipative (equivalently, maximally dissipative). 
We begin by showing the existence of classical solutions of~\eqref{eq:SysNodeSysMain}. To this end, let $x_0\in X$ and $u\in \H^1(0,\infty;U)$ be such that $(x_0,u(0))^\top\in\Dcomp$.
We can then choose
 $y_0\in \H^1(-\infty,0;U)$ so that $(y_0,x_0,u)^\top\in \Dom(\mc{A})$.
We have from~\citel{Miy92book}{Thm.~4.20} that $\mc A$ is the generator of a nonlinear contraction semigroup $\TLP$ on $\XLP=\L^2(-\infty,0;U)\times X\times \L^2(0,\infty;U)$. We define $x_{e0}=(y_0,x_0,u)^\top$ and $x_e(t)=\TLP_t (x_{e0})$ for $t\ge 0$. We then have from~\citel{Miy92book}{Thm.~4.10, Cor.~3.7} that
$x_e:\zinf\to \XLP $
 is a Lipschitz continuous, right-differentiable, and almost everywhere differentiable function 
such that $x_e(0)=(y_0,x_0,u)^\top$, 
 $x_e(t)\in \Dom(\mathcal{A})$ for all $t\geq 0$,
 $t\mapsto\mathcal{A}(x_e(t))$ is right-continuous,
and
\eqn{
\label{eq:nACPinSysNodeProof}
\dot x_e(t)=\mathcal{A}(x_e(t)) \qquad \mbox{for a.e. } t\ge 0. 
}
Together these properties also imply that $x_e\in \Wloc[1](0,\infty;\XLP )$.
For every $t\geq 0$ we can decompose $x_e(t)$ into parts as
$x_e(t)=(\tilde y(t,\cdot),x(t),\tilde u(t,\cdot))^\top$, and
we denote by
$y(t):=v(\tilde u(t,0),x(t))$ the element `$v$' in the definition of $\Dom(\mathcal{A})$ corresponding
to $(\tilde y(t,\cdot),x(t),\tilde u(t,\cdot))^\top\in \Dom(\mc{A})$.
With this notation equation~\eqref{eq:nACPinSysNodeProof} is equivalent to the equations
\begin{subequations}
\eqn{
\label{eq:LPACPyt}
 \frac{\partial\tilde y}{\partial t}(t,s) &= \frac{\partial \tilde y}{\partial s}(t,s)\\
\label{eq:LPACPytBC}
\tilde y(t,0) &= \phi(\tilde u(t,0)-y(t))\\
\label{eq:LPACPx}
  \dot x(t) &= Ax(t) + B\phi(\tilde u(t,0)-y(t))\\
\label{eq:LPACPut}
 \frac{\partial\tilde u}{\partial t}(t,r)
 &=\frac{\partial\tilde u}{\partial r}(t,r) \\
\label{eq:LPACPy}
y(t) &= C\& D \pmatsmall{x(t)\\ \phi(\tilde u(t,0)-y(t))}
}
\end{subequations}
for a.e. $t\geq0$, a.e. $s\in \zabr{-\infty}{0}$, and a.e. $r\in\zinf$. Moreover, equations~\eqref{eq:LPACPytBC} and~\eqref{eq:LPACPy} hold
for \emph{every} $t\geq 0$, and we have $\tilde u(t,\cdot)\in \H^1(0,\infty;U)$ and $\tilde y(t,\cdot)\in \H^1(-\infty,0;U)$ for all $t\geq 0$.
The property $\tilde u(0,\cdot) = u\in \H^1(0,\infty;U)$ together with the equation~\eqref{eq:LPACPut} 
imply, as in~\citel{TucWei09book}{Ex.~2.3.7}, that
$\tilde u(t,r) = u(t+ r)$ for all $t\geq 0$ and $r\in \zinf$.
In particular, we have $\tilde u(t,0)=u(t)$ for all $t\geq 0$.
The properties of $x_e$ imply that $x\in \Wloc[1](0,\infty;X)$.

We will now show that $t\mapsto \phi(u(t)-y(t))\in \Hloc[1](0,\infty;U)$. We begin by showing that $t\mapsto \phi(u(t)-y(t))$ is locally square integrable. 
To this end, let $\lambda>0$ be such that $\re P(\lambda)\geq c_\lambda I$ for some $c_\gl>0$.
Equation~\eqref{eq:LPACPx} implies that for a.e. $t\geq 0$ we have
$\lambda x(t)-\dot x(t)=(\lambda-A)x(t)-B\phi(t)$, and further $x(t)=R_\lambda(\gl x(t)-\dot x(t))+R_\lambda B\phi(t)$, where we have denoted $R_\lambda:=(\lambda-A)\inv$ and $\phi(t):=\phi(u(t)-y(t))$.
Substituting this into~\eqref{eq:LPACPy} and using $R_\lambda(\gl x(t)-\dot x(t))\in \Dom(A)$ we get, for a.e. $t\geq 0$,
\eq{
y(t)&= C\&D \pmat{x(t)\\\phi(t)}
= CR_\lambda(\gl x(t)-\dot x(t)) + C\&D \pmat{R_\lambda B\phi(t)\\\phi(t)}\\
&=  CR_\lambda(\gl x(t)-\dot x(t)) + P(\lambda)\phi(u(t)-y(t)).
}
We have from Lemma~\ref{lem:PhiProps}(a) that there exists a globally Lipschitz continuous function $g:U\times U\to U$ such that $y(t)=g(u(t),CR_\lambda(\gl x(t)-\dot x(t)))$ for a.e. $t\geq 0$. 
But since $u\in \Hloc[1](0,\infty;U)$ and $t\mapsto CR_\lambda(\gl x(t)-\dot x(t))\in \Lploc[\infty](0,\infty;U)$, we deduce that $y\in \Lploc[2](0,\infty;U)$, and the global Lipschitz continuity of $\phi$ also implies
$t\mapsto \phi(u(t)-y(t))\in \Lploc[2](0,\infty;U)$.
Since $t\mapsto \phi(u(t)-y(t))\in \Lploc[2](0,\infty;U)$, we have from~\citel{TucWei09book}{Ex.~4.2.7} that the unique solution of~\eqref{eq:LPACPyt} for a.e. $t\geq 0$ and a.e. $s\leq 0$
satisfies 
$\tilde y(t,s) =\tilde y( t+ s,0)$ for all $s\in [-t,0]$.
Equation~\eqref{eq:LPACPytBC} for all $t\geq 0$ therefore implies that 
for every fixed $\tau>0$
 we have
\eq{
\phi(u(t)-y(t)) = \phi(\tilde u(t,0)-y(t)) 
= \tilde y(\tau,t- \tau), \qquad t\in [0,\tau].
}
But since $\tilde y(\tau,\cdot)\in \H^1(-\infty,0;U)$ and since this identity holds for every $\tau>0$, we can deduce that $t\mapsto \phi(u(t)-y(t))\in \Hloc[1](0,\infty;U)$.
Combining this property, our knowledge that $\dot x\in \L^\infty(0,\infty;X)$,
and the fact that~\eqref{eq:LPACPx} holds for a.e. $t\geq 0$
with the results~\citel{TucWei14}{Prop.~4.6 \& 4.7} shows that 
$x\in C^1(\zinf;X)$, $y\in \H^1(0,\infty;U)$, that \eqref{eq:LPACPx} and~\eqref{eq:LPACPy} in fact hold for all $t\ge 0$, and that $x$ and $y$ satisfy~\eqref{eq:LPSemigStateAndOutput}.
We therefore have from Definition~\ref{def:SysNodeStates} that $(x,u,y)$ is a classical solution  of~\eqref{eq:SysNodeSysMain}.

We will now show that classical and generalised solutions of~\eqref{eq:SysNodeSysMain} satisfy the incremental estimate~\eqref{eq:ContractivityEstim}. 
Let $(x_1,u_1,y_1)$ and $(x_2,u_2,y_2)$ be two classical solutions of~\eqref{eq:SysNodeSysMain} and denote
$\phi_k(s)=\phi(u_k(s)-y_k(s))$ for $k=1,2$ for brevity.
Using the fact that~\eqref{eq:SysNodeSysMain} holds for almost every $t\geq 0$, the impedance passivity of the system node $S$, and Lemma~\ref{lem:PhiProps}(b) we have that
\begin{align*}
    \frac{\d}{\d s}\Vert x_2(s)-x_1(s)\Vert^2 
&=2\re\Iprod{A\& B \pmat{x_2(s)-x_1(s)\\\phi_2(s)-\phi_1(s)}}{x_2(s)-x_1(s)}_X\\
&\leq 2\re\Iprod{C\& D \pmat{x_2(s)-x_1(s)\\\phi_2(s)-\phi_1(s)}}{\phi_2(s)-\phi_1(s)}_U\\
&= 2\re\Iprod{y_2(s)-y_1(s)}{\phi_2(s)-\phi_1(s)}_U\\
    &\leq \Vert u_2(s)-u_1(s)\Vert^2 - \Vert\phi(u_2(s)-y_2(s))-\phi(u_1(s)-y_1(s))\Vert^2
\end{align*}
 for almost every $s\geq 0$.
Rearranging the terms and integrating the above estimate with respect to $s$ from $0$ to $t$ shows that the classical solutions
 satisfy the estimate~\eqref{eq:ContractivityEstim}.
Moreover, it is easy to see that this property together with 
 Definition~\ref{def:SysNodeStates} and the global Lipschitz continuity of $\phi$ imply 
that also all generalised solutions (as limits of classical solutions) satisfy~\eqref{eq:ContractivityEstim}.

Now let $x_0\in X$ and $u\in \L^2(0,\infty;U)$.
We will show that~\eqref{eq:SysNodeSysMain} has a generalised solution $(x,u,y)$ satisfying $x(0)=x_0$.
Let $\tau>0$ be arbitrary
 and define $ y_0=0\in \L^2(-\infty,0;U)$ 
and $x_{e0}=( y_0,x_0,u)^\top\in \XLP $.
We define
$x_e(t)=\TLP_t(x_{e0}) $, $t\geq 0$,
where $\TLP$ is again the nonlinear contraction semigroup generated by $\mc A$.
Then $x_e\in C(\zinf;\XLP )$, $x_e(0)=x_{e0}$
 and $x_e(t)$ has the structure $x_e(t)=(\tilde y(t,\cdot),x(t),\tilde u(t,\cdot))^\top\in \XLP $ for all $t\geq 0$.
Since $\Dom(\mc{A})$ is dense in $\XLP $ by Proposition~\ref{prp:LPmaxDissipative}, there exists a sequence 
$(x_{e0}^k)_{k\in\N}\subset \Dom(\mc{A})$ such that $\norm{x_{e0}^k-x_{e0}}\to 0$ as $k\to\infty$.
If we define $x_e^k(t)=\TLP_t(x_{e0}^k)$ for $t\geq 0$, then 
the contractivity of $\TLP$ implies 
\eqn{
\label{eq:LPgensolconv}
\sup_{t\in [0,\tau]}
\norm{x_e(t)-x_e^k(t)}_{\XLP }
\leq
\norm{x_{e0}-x_{e0}^k}_{\XLP }\to 0, \qquad k\to\infty.
}%
Similarly as above, for every $k\in\N$ the element $x_e^k =(\tilde y^k,x^k,\tilde u^k)^\top$ 
 defines 
a classical solution $(x^k,u^k,y^k)$
of~\eqref{eq:SysNodeSysMain}.
 Moreover, these classical solutions satisfy the estimate~\eqref{eq:ContractivityEstim}.
The property that $x_e^k(0)\to x_e(0)$ in $\XLP $ implies that $\tilde u^k(0,\cdot)\to u$ in $\L^2(0,\infty;U)$.
As above, we have $\tilde u^k(t,r) = u^k(t+ r)$ for all $t\geq 0$ and $r\ge 0$, and thus we can deduce that $\Pt[\tau]u^k\to \Pt[\tau]u$ in $\L^2(0,\tau;U)$ as $k\to\infty$. 
The continuity of $x_e$
implies $x\in C(\zinf;X)$
 and~\eqref{eq:LPgensolconv}
implies $\norm{\Pt[\tau]x^k-\Pt[\tau]x}_{C([0,\tau];X)}\to 0$ as $k\to\infty$.

In the final step of the proof we will show that there exists $y\in \Lploc[2](0,\infty;U)$ such that $\Pt[\tau]y^k\to \Pt[\tau]y$ in $\L^2(0,\tau;U)$ as $k\to\infty$.
This will establish that $(x,u,y)$ is a generalised solution of~\eqref{eq:SysNodeSysMain}.
Similarly as above, we can see that $(x^k,u^k,y^k)$ satisfy%
\begin{subequations}%
\eqn{
x^k(t)& = \T_tx^k(0)+\Phi_t \Pt[t] \phi(u^k-y^k)
\label{eq:StateEq_Strong} \\
 \Pt[t] y^k &= \Psi_t x^k(0) + \mathbb{F}_t\Pt[t]  \phi(u^k-y^k).\label{eq:OutputEq_Strong}
}    
\end{subequations}
for all $k\in\N$ and $t\ge 0$. Moreover, the  estimate~\eqref{eq:ContractivityEstim} implies that for every $t>0$
\eq{
\norm{\Pt[t] \phi(u^k\hspace{-.2ex}-\hspace{-.2ex}y^k)-\Pt[t] \phi(u^n\hspace{-.2ex}-\hspace{-.2ex}y^n)}_{\L^2(0,t)}^2\leq
\Vert x^k(0)\hspace{-.2ex}-\hspace{-.2ex}x^n(0)\Vert^2
    + \norm{\Pt[t]u^k\hspace{-.2ex}-\hspace{-.2ex}\Pt[t]u^n}_{\L^2(0,t)}^2
}
for all $k,n\in\N$.
For every $t>0$ the sequences $(x^k(0))_k\subset X$ and $(\Pt[t]u^k)_k\subset \L^2(0,t;U)$ are convergent, and the above estimate implies that $(\Pt[t]\phi(u^k-y^k))_k\subset \L^2(0,t;U)$ is a Cauchy sequence, and therefore has a limit $\psi_t\in \L^2(0,t;U)$. The above estimate also implies that if $t_2>t_1>0$ then $\Pt[t_1]\psi_{t_2}=\psi_{t_1}$.
We can therefore define $\psi\in \Lploc[2](0,\infty;U)$ such that $\Pt[t]\psi=\psi_t$ for every $t>0$, and define $y\in \Lploc[2](0,\infty;U)$ by $y=\Psi_\infty x_0+\F_\infty\psi$. 
Since $\Sigma $ is a well-posed system and since $\Pt[\tau]\phi(u^k-y^k)\to \psi_\tau$ in $\L^2(0,\tau;U)$ as $k\to\infty$, the identity~\eqref{eq:OutputEq_Strong} with $t=\tau$ and the definition of $y$ imply that $\Pt[\tau] y^k\to \Pt[\tau] y$ in $\L^2(0,\tau;U)$ as $k\to\infty$.
Since $\tau>0$ was arbitrary, we have that $(x,u,y)$ is a generalised solution of~\eqref{eq:SysNodeSysMain}.
Since $\phi$ is globally Lipschitz, we also have that $\phi(u-y)\in\Lploc[2](0,\infty;U)$ and
$\Pt[\tau]\phi(u^k-y^k) 
\to
\Pt[\tau]\phi(u-y)$ as $k\to\infty$. The fact that we also have
$\Pt[\tau]\phi(u^k-y^k) 
\to \Pt[\tau]\psi_\tau$
and the uniqueness of limits imply that $\Pt[\tau]\psi_\tau = \Pt[\tau]\phi(u-y)$. 
Thus the definition of $y$ implies that~\eqref{eq:LPSemigOutput} holds.
Finally, the well-posedness of $\Sigma$ implies that the right-hand side of~\eqref{eq:StateEq_Strong} converges to $\T_tx(0)+\Phi_t\Pt[t]\phi(u-y)$ for every $t>0$ as $k\to\infty$. Since we also have $x^k(t)\to x(t)$ as $k\to \infty$ for all $t>0$, 
 the uniqueness of limits implies that~\eqref{eq:LPSemigState} holds.
\end{proof}

\begin{proof}[Proof of Theorem~\textup{\ref{thm:nonlinACPsol}}]
We will first show that $A_\phi$ is a single-valued  $m$-dissipative operator.
The operator $A_\phi$ is single-valued if
the element `$v$' in the definition of $\Dom(A_\phi)$ is unique
(and these two properties are, in fact, equivalent).
For proving the uniqueness of $v$ we will use our assumption that $\re \iprod{\phi(u_2)-\phi(u_1)}{u_2-u_1}> 0$ whenever $\phi(u_1)\neq \phi(u_2)$.
Let $x\in \Dom(A_\phi)$
and let $v_1\in U$ and $v_2\in U$ be two elements satisfying
\eq{
Ax + B\phi(- v_k)\in X \mbox{ and } v_k = C\& D \pmat{x\\\phi(-v_k)}, \qquad k=1,2.
}
If we denote 
$\phi_1 := \phi(-v_1)$ and
$\phi_2 := \phi(-v_2)$  for brevity, then  $X\ni Ax+B\phi_2 - (Ax+B\phi_1) = A0+ B(\phi_2-\phi_1)$, and thus $(0,\phi_2-\phi_1)^\top\in \Dom(S)$. 
Moreover, $C\&D \pmatsmall{0\\\phi_2-\phi_1} = C\& D \bigl(\pmatsmall{x\\\phi_2}-\pmatsmall{x\\\phi_1}\bigr)= v_2-v_1 $.
The impedance passivity of $S$ 
 implies that
\eq{
0 &= \re\iprod{A\&B \pmatsmall{0\\\phi_2-\phi_1}}{0}_X
\geq  \re\iprod{C\&D \pmatsmall{0\\\phi_2-\phi_1}}{\phi_1-\phi_2}_U\\
 &= \re\iprod{v_2-v_1}{\phi_1-\phi_2}_U
\geq 0.
}
Thus $\re\iprod{v_2-v_1}{\phi_1-\phi_2}_U = 0$, which based on our assumption implies that $\phi(-v_1)=\phi(-v_2)$, and further that $v_2-v_1=C\& D  \pmatsmall{0\\\phi_2-\phi_1}=0$. Thus the element $v$ in the definition of $\Dom(A_\phi)$ is unique and $A_\phi$ is single-valued.

To show that $A_\phi$ is dissipative, let $x_1,x_2\in \Dom(A_\phi)$ and let $v_1,v_2\in U$ denote the corresponding elements in the definition of $\Dom(A_\phi)$.
Then the impedance passivity of $S$ and the monotonicity of $\phi$ imply that 
\eq{
\MoveEqLeft\re\iprod{A_\phi(x_2)-A_\phi(x_1)}{x_2-x_1}
 = \re\iprod{Ax_2 + B\phi(-v_2) -(Ax_1 + B\phi(-v_1))}{x_2-x_1}\\
 &= \re\Bigl\langle A\& B \pmat{x_2-x_1\\ \phi(-v_2) - \phi(-v_1)},x_2-x_1\Bigr\rangle\\
&\leq \re\Bigl\langle C\&D \pmat{x_2-x_1\\ \phi(-v_2) - \phi(-v_1)},\phi(-v_2) - \phi(-v_1)\Bigr\rangle\\
&= \re\iprod{v_2-v_1}{\phi(-v_2) - \phi(-v_1)}
\leq 0.
}
Thus $A_\phi$ is dissipative.

We will now show that $A_\phi$ is $m$-dissipative. By~\citel{Miy92book}{Lem.~2.13} it suffices to show that
 $\ran(\lambda-A_\phi)=X $ for some $\lambda > 0$.
To this end, let 
$x_1\in X$
be arbitrary and let $\lambda>0$ be such that $\re P(\lambda)\geq c_\lambda I$ for some $c_\gl>0$.
By Lemma~\ref{lem:PhiProps}(a) we can define
 $v$ as the (unique) element in $U$ such that
\begin{equation*}
        v = CR_\lambda x_1 + P(\lambda)\phi(-v).
\end{equation*}
If we define $x = R_\lambda x_1 + R_\lambda B\phi(-v)$, 
then $(\lambda -A)x - B\phi(-v) = x_1$, and
a direct computation 
shows that
\eq{
\MoveEqLeft C\& D \pmat{x\\ \phi(-v)}
=C\& D \pmat{R_\lambda x_1 + R_\lambda B \phi(-v)\\ \phi(-v)}
= CR_\lambda x_1 + P(\lambda)\phi(-v)
= v.}
Thus $x\in \Dom(A_\phi)$ and $(\gl-A_\phi) (x) = x_1$. Since $x_1\in X$ was arbitrary, we conclude that $A_\phi$ is $m$-dissipative.

To verify the estimate~\eqref{eq:NLFBContractivity}, let
 $x_1,x_2\in C(\zinf;U)$ be two strong solutions of~\eqref{eq:nACP} on $\zinf$ 
with corresponding outputs $y_1$ and $y_2$.
If we denote $\phi_k(t)=\phi(-y_k(t))$, $k=1,2$, for brevity,
then
 the impedance passivity of $S$
and the monotonicity of $\phi$ imply that
  for a.e. $t\ge 0$
\eq{
\frac{1}{2}\frac{\d}{\d t}\norm{x_2(t)-x_1(t)}^2
&=\re\Bigl\langle A\& B \pmat{x_2(t)-x_1(t)\\\phi_2(t)-\phi_1(t)},x_2(t)-x_1(t)\Bigr\rangle_X\\
&\leq \re\Bigl\langle C\& D \pmat{x_2(t)-x_1(t)\\\phi_2(t)-\phi_1(t)},\phi_2(t)-\phi_1(t)\Bigr\rangle_U\\
&= \re\Iprod{y_2(t)-y_1(t)}{\phi_2(t)-\phi_1(t)}_U\leq 0.
}
Because of this, $\norm{x_2(t)-x_1(t)}\leq \norm{x_2(0)-x_1(0)}$ for all $t\geq 0$.
It is straightforward to verify that 
also the generalised solutions of~\eqref{eq:nACP} (as limits of strong solutions) satisfy the same estimate.
In particular, strong and generalised solutions of~\eqref{eq:nACP} are uniquely determined by $x(0)$.

We will now study the existence of strong solutions of~\eqref{eq:nACP}.
 To this end, let $x_0\in \Dom(A_\phi)$.
Since $A_\phi$ is a single-valued and $m$-dissipative (equivalently, maximally dissipative) operator we have from~\citel{Miy92book}{Thm.~4.20} that $ A_\phi$ is the generator of a nonlinear contraction semigroup $ \T^\phi$ on $\overline{\Dom(A_\phi)}$, which is a closed and convex subset of $X$. We define $x(t)=\T^\phi_t (x_0)$ for $t\ge 0$.
 We then have from~\citel{Miy92book}{Thm.~4.10, Cor.~3.7} that
$x:\zinf\to X $
 is a locally Lipschitz continuous, right-differentiable, and almost everywhere differentiable function 
such that $x(0)=x_0$, 
 $x(t)\in \Dom(A_\phi)$ for all $t\geq 0$,
 $t\mapsto A_\phi(x(t))$ is right-continuous,
and
\eqn{
\label{eq:nACPinSGproof}
\dot x(t)=A_\phi(x(t)) \qquad \mbox{for a.e. } t\ge 0. 
}
Moreover, by contractivity of $\T^\phi$ and by~\citel{Miy92book}{Cor.~3.7} both $x$ and $\dot x$ are essentially bounded.
Together these properties imply that $x\in \W^{1,\infty}(0,\infty;X )$, and thus $x$ is a strong solution of~\eqref{eq:nACP}.
For $t\geq 0$
we denote by
$y(t):=v(x(t))$ the element `$v$' in the definition of $\Dom(A_\phi)$ corresponding to the element $x(t)$.
Equation~\eqref{eq:nACPinSGproof} and the structure of $A_\phi$ imply that
\begin{subequations}
\eqn{
\label{eq:nSysinSGproof_x}
  \dot x(t) &= Ax(t) + B\phi(-y(t))\\
\label{eq:nSysinSGproof_y}
y(t) &= C\& D \pmatsmall{x(t)\\ \phi(-y(t))}
}
\end{subequations}
for a.e. $t\geq0$.
 Moreover, the second equation also holds
for every $t\geq 0$.

We will now
 show that $y$ is right-continuous and that $y,\phi(-y)\in \Lp[\infty](0,\infty;U)$.
To this end, let $\lambda>0$ be such that $\re P(\lambda)\geq c_\lambda I$ for some $c_\gl>0$.
By~\citel{Miy92book}{Cor.~3.7} the function $x:\zinf\to X$ is right-differentiable, its right-derivative $z:=D^+ x$ is right-continuous and non-increasing on $\zinf$, and $z(t)=Ax(t)+B\phi(-y(t))$ for $t\geq 0$.
In particular, $z\in\Lp[\infty](0,\infty;X)$.
For $t\geq 0$ we therefore have
$\lambda x(t)-z(t)=(\lambda-A)x(t)-B\phi(-y(t))$, and further $x(t)=R_\lambda(\gl x(t)-z(t))+R_\lambda B\phi(-y(t))$, where we have denoted $R_\lambda:=(\lambda-A)\inv$.
Substituting this into~\eqref{eq:nSysinSGproof_y} and using $R_\lambda(\gl x(t)-z(t))\in \Dom(A)$ we get for $t\geq 0$
\eq{
y(t)&= C\&D \pmat{x(t)\\\phi(-y(t))}
=  CR_\lambda(\gl x(t)-z(t)) + P(\lambda)\phi(-y(t)).
}
We have from Lemma~\ref{lem:PhiProps}(a) that there exists a globally Lipschitz continuous function $g:U\times U\to U$ such that $y(t)=g(0,CR_\lambda(\gl x(t)-z(t)))$ for $t\geq 0$. 
But since  $t\mapsto CR_\lambda(\gl x(t)-z(t))$ is right-continuous and essentially bounded,
 we deduce that also $y$ is right-continuous and $y\in \Lp[\infty](0,\infty;U)$.
Finally, since $\re P(\gl)\geq c_\gl I$,  we have that $P(\gl)$ is invertible and $\phi(-y(t))=P(\gl)\inv (y(t)-CR_\lambda(\gl x(t)-z(t)))$ for $t\geq 0$. Thus
also $t\mapsto \phi(-y(t))$ is right-continuous and $t\mapsto \phi(-y(t))\in \Lp[\infty](0,\infty;U)$.

Assume for the moment that $S$ is well-posed and denote the associated well-posed system by $\Sigma = (\T,\Phi,\Psi,\F)$. We will show that $x$ and $y$ satisfy~\eqref{eq:NLFBStateandOutput}.
The property $\phi(-y(\cdot))\in \Lploc[2](0,\infty;U)$, our knowledge that $\dot x\in \L^\infty(0,\infty;X)$,
the fact that~\eqref{eq:nSysinSGproof_x} holds for a.e. $t\geq 0$ together with~\citel{TucWei14}{Prop. 4.7} imply that 
$x(t)=\T_t x_0+\Phi_t\Pt \phi(-y)$ for all $t\geq 0$.
We will now show that $y$ satisfies~\eqref{eq:NLFBOutput}.
Denote $\psi = \phi(-y(\cdot))$.
We saw above that $y,\psi\in\Lp[\infty](0,\infty;U)$, and since $ \norm{Ax(\cdot)+ B\psi}=\norm{\dot x}\in \Lp[\infty](0,\infty)$, we have
$(x,\phi(-y))^\top \in \Lp[\infty](0,\infty;\Dom(S))$. Since $C\&D\in \Lin(\Dom(S),U)$, taking Laplace transforms on both sides of~\eqref{eq:nSysinSGproof_x} and~\eqref{eq:nSysinSGproof_y} shows that for $\gl\in\Cc_+$ we have $\hat x(\gl) = (\gl-A)\inv x_0+(\gl-A)\inv B\hat \psi(\gl)$ and
\eq{
\hat y(\gl) 
&= C\& D \pmat{\hat x(\gl)\\ \hat\psi(\gl)}
= C(\gl-A)\inv x_0+P(\gl)\hat\psi(\gl).
}
Together with~\citel{TucWei14}{Prop.~4.7} this implies that
 the Laplace transform of $y$ coincides with the Laplace transform of $\Psi_\infty x_0+\F_\infty \phi(-y)$ on some open right half plane of $\Cc$.
 The uniqueness of Laplace transforms implies that $y=\Psi_\infty x_0 + \F_\infty \phi(-y)$. 
This completes the proof that if $S$ is well-posed, then $x$ and $y$ satisfy~\eqref{eq:NLFBStateandOutput}.

To prove the existence of a generalised solution, assume that $x_0\in \overline{\Dom(A_\phi)}$. 
Then there exists a sequence $(x^k_0)_{n\in\N}\subset \Dom(A_\phi)$ such that $\norm{x_0^k-x_0}\to 0$ as $k\to\infty$. If we define $x(t)=\T^\phi_t (x_0)$ and $x^k(t)=\T^\phi_t (x_0^k)$ for $t\geq 0$ and for $k\in\N$,  then the contractivity of the semigroup $\T^\phi$ implies that 
\eq{
\sup_{t\geq 0} \norm{x^k(t)-x(t)}
\leq \norm{x_0^k-x_0}\to 0
}
as $k\to\infty$. 
This implies that $x$ is a generalised solution of~\eqref{eq:nACP}.
\end{proof}

\begin{proof}[Proof of Lemma~\textup{\ref{lem:DomAphiDense}}]
We first note that by~\citel{Log20}{Thm.~4.4} 
we have  $\re P(\gl)\geq c_\gl I$ for some $c_\gl>0$ 
for one $\gl>0$ if and only if such a $c_\gl>0$ exists for all $\gl>0$.
Because of this, Lemma~\ref{lem:PhiProps}(a) implies that for every $\gl>0$ and $r\in U$ the equation $v=r+P(\gl)\phi(-v)$ has a unique solution $v\in U$ which is determined by $v=g_\gl(r)$, where $g_\gl:U\to U$ is a globally Lipschitz continuous function.
Using the properties of $S$ and similar computations as in the proof of Theorem~\ref{thm:nonlinACPsol} it is straightforward to verify that for any $\gl>0$
\eq{
\Dom(A_\phi) = \bigl\{ x_0+(\gl-A)\inv B\phi(-v) \, \bigm| \, x_0\in \Dom(A), \ 
v=g_\gl(Cx_0)
\bigr\}.
}
For brevity, we denote $R_\gl=(\gl-A)\inv$ for $\gl>0$ throughout the proof.

To prove part (a) it suffices to note that
if we let $\gl>0$ and define $v_0=g_\gl(0)\in U$, then $v_0=P(\gl)\phi(-v_0)$ and
 the above characterisation of $\Dom(A_\phi)$ implies that $\{x_0+R_\gl B\phi(-v_0)\,|\ x_0\in \ker(C)\}\subset \Dom(A_\phi)\subset X$. But since $\ker(C)$ is dense in $X$ and $R_\gl B\phi(-v_0)\in X$, the set
 $\Dom(A_\phi)$ must be dense in $X$.

In part (b), let $M>0$ be such that $\norm{\phi(u)}\leq M$ for all $u\in U$. Let $x\in X$ and $\eps>0$ be arbitrary.
Since $\norm{R_\gl B}\to 0$ as $\gl\to \infty$ by assumption, we can choose $\gl>0 $ such that $\norm{R_\gl B}<\eps/(2M)$. If we choose $x_0\in \Dom(A)$ such that $\norm{x_0-x}<\eps/2$ and $v=g_\gl(Cx_0)\in U$, then $x_0+ R_\gl B\phi(-v)\in \Dom(A_\phi)$ and 
\eq{
\norm{x-x_0-R_\gl B\phi(-v)}<\frac{\eps}{2} + \norm{R_\gl B}\norm{\phi(-v)}<\eps.
}
Since $x\in X$ and $\eps>0$ were arbitrary, we have that
$\Dom(A_\phi) $ is dense in $X$.

To prove part (c), assume that $P(\gl)\to 0$ as $\gl \to \infty$ and that $\phi$ is globally Lipschitz continuous with Lipschitz constant $L_\phi>0$.
For any $u\in U$ and   $\gl>0$ we have that $(R_\gl Bu,u)^\top\in \Dom(S)$ and the impedance passivity of $S$ implies 
\eq{
\re \iprod{P(\gl)u}{u} 
=\re\Bigl\langle C\& D \pmat{R_\gl Bu\\u},u\Bigr\rangle
\ge \re \iprod{AR_\gl Bu + Bu}{R_\gl Bu}
= \gl \norm{R_\gl Bu}^2.
}
This implies that $\norm{R_\gl B}^2\leq \gl\inv \norm{P(\gl)}\to 0$ as $\gl\to\infty$.
Let
 $x\in X$ and $\eps>0$ be arbitrary and
let $(\gl_n)_{n\in\N}\subset (0,\infty)$ be such that $\gl_n\to \infty$ as $n\to\infty$.
Choose $x_0\in \Dom(A)$ such that $\norm{x-x_0}<\eps/2$ and define $v_n=g_{\gl_n}(Cx_0)$ for $n\in\N$. Then $v_n= Cx_0 + P(\gl_n)\phi(-v_n)$, $n\in\N$. 
Since $P(\gl_n)\to 0$ as $n\to \infty$, there exists $n_0\in \N$ such that $\norm{P(\gl_n)}<1/(2L_\phi)$ for all $n\geq n_0$. For all $n\geq n_0$ we therefore have
\eq{
\norm{v_n} \le \norm{Cx_0} + \norm{P(\gl_n)}\norm{\phi(-v_n)}
\le \norm{Cx_0} +  \frac{1}{2L_\phi}L_\phi \norm{v_n},
}
which implies that $\norm{v_n}\le 2 \norm{Cx_0}$, and further that $\norm{\phi(-v_n)}\le 2L_\phi \norm{Cx_0}$, for all $n\ge n_0$. 
Since $R_{\gl_n}B\to 0$ as $n\to\infty$ we can choose $m\ge n_0$ such that $\norm{R_{\gl_m}B}<\eps/(4L_\phi \norm{Cx_0})$.
Then
$x_0 + R_{\gl_m}B\phi(-v_m)\in \Dom(A_\phi)$ and
\eq{
\norm{x-x_0 -R_{\gl_m}B\phi(-v_m)}<\frac{\eps}{2} + \norm{R_{\gl_m}B} \norm{\phi(-v_m)}< \frac{\eps}{2} + \frac{\eps}{2} = \eps.
}
Since $x\in X$ and $\eps>0$ were arbitrary, we have that $\Dom(A_\phi) $ is dense in $X$.

To prove part (d), assume that $B\in \Lin(U,X)$ and $C\& D=[C,\ 0]$ with $C\in \Lin(X,U)$. If $x\in \Dom(A)$ and if we choose $v=Cx$, then $Ax+B\phi(-v)\in X$ and $v=C\&D \pmatsmall{x\\ \phi(-v)}=Cx$. Thus $x\in \Dom(A_\phi)$. Since $x\in \Dom(A)$ was arbitrary, we have $\Dom(A)\subset \Dom(A_\phi)\subset X$, and this implies that $\Dom(A_\phi)$ is dense in $X$.

To prove part (f), we will first show that the
maps $x_0\mapsto x(t)$, $t\geq 0$, define a (nonlinear) semigroup $\T^0$ of 
contractions so that $\T_t^0(x_0)=x(t)$, $t\ge 0$.
Let $x_1,x_2\in C(\zinf;X)$ and $y_1,y_2\in \Lploc[2](0,\infty;U)$ be solutions of~\eqref{eq:NLFBStateandOutput} corresponding to two initial conditions $x_{10}\in X$ and $x_{20}\in X$. Then
\eq{
x_2(t)-x_1(t) &= \mathbb{T}_t (x_{10}-x_{20}) +  \Phi_t \Pt[t](\phi(- y_2)-\phi(-y_1)), \qquad t\geq 0,\\
 y_2-y_1 &= \Psi_\infty (x_{20}-x_{10})  + \mathbb{F}_\infty (\phi(- y_2)-\phi(-y_1)).
}
Denoting $\phi_k=\phi(-y_k(\cdot))$, $k=1,2$,
 the impedance passivity of
$\Sigma$ and the monotonicity of $\phi$ imply that for all $t\ge 0$
\eq{
\norm{x_2(t)-x_1(t)}^2 - \norm{x_{20}-x_{10}}^2
\leq 2\re \int_0^t \iprod{\phi_2(s)-\phi_1(s)}{y_2(s)-y_1(s)}\d s\leq 0.
}
Thus 
$\norm{x_2(t)-x_1(t)}^2 \le \norm{x_{20}-x_{10}}^2$ for all $t\geq 0$.
The properties of $\Sigma$ therefore imply that the 
maps $x_0\mapsto x(t)$, $t\geq 0$, indeed define a semigroup $\T^0$ of contractions on $X$ in the sense of~\citel{Miy92book}{Def.~3.1}.
We have from the proof of Theorem~\ref{thm:nonlinACPsol} and~\cref{rem:nACPremAphiSingleValued} that $A_\phi$ is the infinitesimal generator~\citel{Miy92book}{Def.~3.2} of a semigroup $\T^\phi$ of contractions on $\overline{\Dom(A_\phi)}$, and that $\T^0_t(x_0)=\T^\phi_t(x_0)$ for all  $t\geq 0$ and $x_0\in\Dom(A_\phi)$.
This further implies that the infinitesimal generator $A_0$ of $\T^0$ is a dissipative extension of $A_\phi$.
But since $X$ is a Hilbert space, $A_\phi$ is a maximal dissipative operator~\citel{Miy92book}{Def.~2.5}, and thus necessarily $A_\phi = A_0$.
Finally, we have from~\citel{Miy92book}{Thm.~4.20} that $\Dom(A_\phi)=\Dom(A_0)$ is dense in $X$.

Finally, to prove part (e) we note that
 Assumption~\ref{ass:PhiAss} in particular implies that $\re \iprod{\phi(u_2)-\phi(u_1)}{u_2-u_1}> 0$ whenever $\phi(u_1)\neq \phi(u_2)$.
Therefore the argument in the proof of Theorem~\ref{thm:nonlinACPsol} shows that $A_\phi$ is single-valued and our claim follows from part (f) and Theorem~\ref{thm:LPSolutionsMain}.
\end{proof}

\section{Global Asymptotic Stability Under Nonlinear Feedback}\label{sec:StrongStab}

In this section we study the stability of the nonlinear system
\begin{equation}
\label{eq:SysNodeSysStab}
\left[\begin{matrix}\dot{x}(t)\\y(t)\end{matrix}\right] = \pmat{A\&B\\ C\& D}\left[\begin{matrix}x(t)\\\phi(- y(t))\end{matrix}\right], 
\qquad 
t\geq 0,
\end{equation}
where $S=\pmatsmall{\AB\\ \CD}$ is an impedance passive system node on $(U,X,U)$ and $\phi: U\to U$. 
Similarly as in Section~\ref{sec:WPnoinputs}, equation~\eqref{eq:SysNodeSysStab} can be reformulated as a nonlinear abstract Cauchy problem
\eqn{
\label{eq:nACPStabSec}
\dot x(t) = A_\phi(x(t)), \qquad t\geq 0,
}
with $A_\phi :\Dom(A_\phi)\subset X\to X$ defined by
\eq{
\Dom(A_\phi) &= \Bigl\{ x\in X\, \Bigm| \exists v\in U \mbox{ s.t.}
\ Ax + B\phi(-v)\in X,\
v = C\& D \pmatsmall{x\\\phi(-v)}
\Bigr\}\\
A_\phi x &= Ax + B\phi(-v(x)),
\qquad x\in \Dom(A_\phi),
}
where $v(x)$ is the element $v$ in the definition of $\Dom(A_\phi)$. 
We investigate the stability of~\eqref{eq:nACPStabSec} 
for functions $\phi:U\to U$
satisfying the following conditions.

\begin{assumption}\label{assum:phi}
The function $\phi:U\to U$ is locally Lipschitz continuous, $\phi(0) = 0$ and there exists constants 
$\alpha, \beta, \delta>0$ 
such that 
\begin{itemize}
    \item $\re\langle \phi(u),u\rangle\geq \alpha\Vert u\Vert^2$ when $\Vert u\Vert<\delta$;
    \item $\re\langle \phi(u),u\rangle\geq \beta$ when $\Vert u\Vert\geq \delta$.
\end{itemize}
\end{assumption}

\begin{example}\label{ex:scalarSatComplex_AssumStab}
If $\psi:\zinf\to \zinf$ 
satisfies Assumption \ref{assum:phi} with $U=\zinf$
 then it is easy to check that also the function $\phi: U \to U$ defined by $\phi(0)=0$ and $\phi(u)=\psi(\norm{u})\norm{u}\inv u$ for $u\neq 0$ satisfies \cref{assum:phi}. 
In particular, the saturation function 
$\phi:\mathbb{C}\to\mathbb{C}$ 
defined so that
 ${\phi}(u) = u$ whenever $\vert u\vert< 1$ and ${\phi}(u) = u/\vert u\vert$ whenever $\vert u\vert\geq 1$ satisfies Assumption~\ref{assum:phi}.
Moreover, if $\phi_1:U_1\to U_1$ and $\phi_2:U_2\to U_2$ satisfy \cref{assum:phi},
then the same is true for $\phi : U\to U$ defined by $\phi(u)=(\phi_1(u_1),\phi_2(u_2))^\top$ for $u=(u_1,u_2)^\top\in U:=U_1\times U_2$.
\end{example}

Our main result establishes the asymptotic convergence of generalised solutions of~\eqref{eq:nACPStabSec} to the origin under 
the assumption that \emph{linear} feedback of the form $u=-y+v$ achieves strong stability of the closed-loop semigroup $\T^K$. 
As shown in \cref{thm:SysNodeFeedback}, the generator $A^K$ of $\T^K$ is given by
\begin{subequations}
\label{eq:StabAK}
\eqn{
\Dom(A^K) &= \Bigl\{ x\in X\, \Bigm| \exists v\in U \mbox{ s.t.}
\ Ax - Bv\in X,\
v = C\& D \pmatsmall{x\\-v}
\Bigr\}\\
A^K x &= Ax - Bv(x),
\qquad x\in \Dom(A^K),
}
\end{subequations}
where $v(x)$ is the element $v$ in the definition of $\Dom(A^K)$. 
Conditions for the strong and exponential stability based on the observability properties of the original system node $S$ have been presented, for instance, in~\cite{Sle74,Ben78a,AmmTuc01,CurWei06,CurWei19,ChiPau23}. 
We recall that 
definitions for the solutions and output of~\eqref{eq:nACPStabSec} were introduced in
  Definition~\textup{\ref{def:SysNodeNoInputSolutions}} and conditions for
 $\Dom(A_\phi)$ to be dense and
 $A_\phi$ to be single-valued 
 were presented in
\cref{lem:DomAphiDense}
and
 \cref{rem:nACPremAphiSingleValued}, respectively.

\begin{theorem}
\label{thm:StabSecond}
Suppose that $S$ is an impedance passive system node on  $(U,X,U)$
and that its transfer function satisfies $\re P(\gl)\ge c_\gl I$ for some $\gl,c_\gl>0$. Let 
$\phi:U\to U$ be a continuous monotone function such that Assumption~\textup{\ref{assum:phi}} holds and $A_\phi$ is single-valued.
Moreover, assume that the semigroup generated by $A^K$ in~\eqref{eq:StabAK} 
is strongly stable.
Then every generalised solution $x$ of~\eqref{eq:nACPStabSec} on $\zinf$ satisfies
$\norm{x(t)}\to 0$ as $t\to\infty$.
If $x$ is a strong solution of~\eqref{eq:nACPStabSec} on $\zinf$, then the corresponding strong output $y$ satisfies
 $y,\phi(-y)\in \Lp[2](0,\infty;U)\cap \Lp[\infty](0,\infty;U)$.

 If $\Dom(A_\phi)$ is dense in $X$, then 
the origin is a globally asymptotically stable equilibrium point of~\eqref{eq:nACPStabSec}, 
that is,
 for every initial state $x_0\in X$ 
the generalised solution $x$ on $\zinf$ of~\eqref{eq:nACPStabSec} satisfying $x(0)=x_0$ exists and satisfies $\norm{x(t)}\to 0$ as $t\to\infty$.
\end{theorem}

\begin{proof} 
Let $x\in \Hloc[1](0,\infty;X)$ be a strong solution of~\eqref{eq:nACPStabSec} and denote its corresponding output by $y$. 
We have by definition that $x(0)\in \Dom(A_\phi)$, and 
thus \cref{thm:nonlinACPsol} and \cref{rem:nACPremAphiSingleValued} imply that $y$ satisfies $y,\phi(-y)\in \Lp[\infty](0,\infty;U)$.
We will now show that $y,\phi(-y)\in \Lp[2](0,\infty;U)$.
Since $x$ and $y$ are a strong solution and a strong output, respectively, of~\eqref{eq:nACPStabSec}, 
the impedance passivity of $S$
implies that for a.e. $s\geq 0$ we have
\eq{
\frac{1}{2}\frac{\d }{\d s}\norm{x(s)}_X^2
&=\re \iprod{\dot x(s)}{x(s)}
=\re \Iprod{A\& B \pmat{x(s)\\\phi(-y(s))}}{x(s)}\\
&\leq \re \Iprod{C\& D \pmat{x(s)\\\phi(-y(s))}}{\phi(-y(s))}
=\re \iprod{y(s)}{\phi(-y(s))}.
}
Integrating the above estimate with respect to $s$ from $0$ to $t$ 
implies that
\begin{equation}
2 \int_0^t\re\langle \phi(-y(s)),-y(s)\rangle\d s\le  \|x(0)\|^2-\|x(t)\|^2 \leq  \|x(0)\|^2 , \qquad t\ge 0.
\label{Inequality_Sigma_Phi_Passive}
\end{equation}
Let $\alpha,\beta,\delta>0$ be as in 
 Assumption~\ref{assum:phi}.
 For a fixed representative of the equivalence class $y\in\Lp[\infty](0,\infty;U)$, define $g=\norm{y(\cdot)}$ and define $\Omega_2=g\inv(\zabl{0}{\gd})$ and $\Omega_1=\zinf\setminus\Omega_2$.
Then $\Omega_1$ and $\Omega_2$ are measurable and disjoint subsets of $\zinf$ and
\eq{
\begin{cases}
\|y(t)\|\geq \delta, \qquad \mbox{for a.e. } t\in \Omega_1\\
\|y(t)\|< \delta, \qquad \mbox{for a.e. } t\in \Omega_2.
\end{cases}
}
 Assumption~\ref{assum:phi} and~\eqref{Inequality_Sigma_Phi_Passive} imply that
\begin{align*}
    \beta\mu(\Omega_1)\leq \int_{\Omega_1} \re\langle \phi(-y(s)),-y(s)\rangle\d s 
    \le \frac{1}{2}\norm{x(0)}^2.
\end{align*}
Thus $\mu(\Omega_1)\leq \frac{1}{2\beta}\norm{x(0)}^2$ and, in particular, 
 $\Omega_1$ has finite measure. On the other hand, Assumption~\ref{assum:phi} and~\eqref{Inequality_Sigma_Phi_Passive} also imply that
\begin{equation}
    \alpha\int_{\Omega_2}\|y(s)\|^2\d s\leq \int_{\Omega_2}\re\langle \phi(-y(s)),-y(s)\rangle\d s 
    \le \frac{1}{2}\norm{x(0)}^2
    <\infty.
\label{Estimate_y_Omega_1}
\end{equation}
 Since $y\in\Lp[\infty](0,\infty;U)$, we have
$\|y\|_{\L^2(\Omega_1;U)}<\infty$. Combined with
\eqref{Estimate_y_Omega_1} this shows that $y\in\L^2(0,\infty;U)$ as claimed.
Since $\phi(-y)\in\Lp[\infty](0,\infty;U)$ and $\mu(\Omega_1)<\infty$, we have $\|\phi(-y)\|_{\L^2(\Omega_1;U)}<\infty$.
On the other hand, 
since $\phi$ is locally Lipschitz continuous and $\norm{y(t)}<\gd$ for a.e. $t\in \Omega_2$, there exists $L_\delta>0$ such that $\|\phi(-y(t))\|\leq L_\delta\|y(t)\|$ for a.e. $t\in\Omega_2$.
Hence
\begin{align*}
    \int_{\Omega_2} \|\phi(-y(s))\|^2\d s \leq L_\delta^2\int_{\Omega_2} \|y(s)\|^2\d s<\infty.
\end{align*}
Combining the above properties shows that $\phi(-y)\in\Lp[2](0,\infty;U)$ as claimed.

Let $K=I\in \Lin(U)$ and let $S^K$ and $M$ be as in
 \cref{thm:SysNodeFeedback}. 
Denote the well-posed linear system associated to $S^K$ by 
$\Sigma^K=(\T^K,\Phi^K,\Psi^K,\F^K)$.
Since $y,\phi(-y)\in \Lp[2](0,\infty;U)$
and since $x$ and $y$ satisfy~\eqref{eq:SysNodeSysStab} for a.e. $t\geq 0$, 
the identity $S=S^KM$
implies that 
\eq{
\pmat{\dot x(t)\\ y(t)}
&= S \pmat{x(t)\\ \phi(-y(t))}
=S^K  \pmat{x(t)\\ y(t)+\phi(-y(t))}
}
for a.e. $t\ge 0$.
Since $y+\phi(-y)\in \Lp[2](0,\infty;U)$, we have from~\citel{TucWei14}{Prop.~4.7} that $x(t) = \mathbb{T}^K_t x(0) + \Phi_t^K \Pt(y +\phi(-y))$ for $t\ge 0$.
Theorem~\ref{thm:SysNodeFeedback} shows that 
$A^K$ in~\eqref{eq:StabAK} is the generator of $\T^K$ and that $\norm{\Phi_t^K}\leq 1$ for all $t\geq 0$.
Thus $\T^K$ is strongly stable and $\T^K_tx(0)\to 0$ as $t\to\infty$. 
Since
 $y+\phi(-y)\in\L^2(0,\infty;U)$,  Lemma~\ref{Lemma_Conv_InputMap} 
finally implies that $\norm{\Phi_t^K \Pt(y +\phi(-y))}\to 0$ and
 $\norm{x(t)}\to 0$ as $t\to\infty$.
This completes the proof that all strong solutions of~\eqref{eq:nACPStabSec} satisfy $\norm{x(t)}\to 0$ as $t\to\infty$, and that their corresponding outputs satisfy $y,\phi(-y)\in \Lp[2](0,\infty;U)$.

Now let $x\in C(\zinf;X)$ be a generalised solution of~\eqref{eq:nACPStabSec} and 
let $\eps>0$ be arbitrary. By definition there exists a strong solution $x_\eps\in \Hloc[1](0,\infty;X)$ such that $\norm{x(0)-x_\eps(0)}<\eps/2$.
The first part of the proof shows that 
 $\norm{x_\eps(t)}\to 0$ as $t\to\infty$. Because of this, we can choose $t_0>0$ so that $\norm{x_\eps(t)}<\eps/2$ for all $t\geq t_0$.
We have from Theorem~\ref{thm:nonlinACPsol} and \cref{rem:nACPremAphiSingleValued} that $x$ and $x_\eps$ satisfy the estimate~\eqref{eq:NLFBContractivity}. Thus for all $t\geq t_0$ we have
\eq{
\norm{x(t)}
\leq \norm{x(t)-x_\eps(t)}+\norm{x_\eps(t)}
\leq \norm{x(0)-x_\eps(0)}+\norm{x_\eps(t)}<\frac{\eps}{2} + \frac{\eps}{2} = \eps.
}
Since $\eps>0$ was arbitrary, we have that $\norm{x(t)}\to 0$ as $t\to\infty$.

Finally, if $\Dom(A_\phi)$ is dense in $X$, then 
\cref{thm:nonlinACPsol} and \cref{rem:nACPremAphiSingleValued} imply that for every $x_0\in X$ there exists a generalised solution $x$ of~\eqref{eq:nACPStabSec} satisfying $x(0)=x_0$, and the previous part of the proof shows that  $\norm{x(t)}\to 0$ as $t\to \infty$.
\end{proof}

We close this section by presenting a variant of \cref{thm:StabSecond} 
in the case where the system node $S$ is well-posed.
It should be noted that  $\phi$  is not assumed to be monotone.

\begin{proposition}
\label{prp:StabWellPosed}
Suppose that $\Sigma =(\T,\Phi,\Psi,\F)$ is an impedance passive well-posed linear system and  that $\phi:U\to U$ satisfies Assumption~\textup{\ref{assum:phi}}.
Moreover, assume that the well-posed linear system $\Sigma^K=(\T^K,\Phi^K,\Psi^K,\F^K)$ obtained from $\Sigma$ by negative feedback $u=-y+v$ is such that the semigroup $\T^K$ is strongly stable.
Let 
 $x_0\in X$ be such that there exist $x\in C(\zinf;X)$ and $y\in \Lp[\infty](0,\infty;U)$ such that 
\begin{subequations}
\label{eq:StabThmStateOutput}
\eqn{
x(t)&= \T_tx_0 + \Phi_t\Pt \phi(-y), \qquad t\geq 0,\\
y&= \Psi_\infty x_0 + \F_\infty \phi(-y).
\label{eq:StabThmStateOutput_y}
}
\end{subequations}
Then $\norm{x(t)}\to 0$ as $t\to\infty$, and $y\in \Lp[2](0,\infty;U)$ and $\phi(-y)\in \Lp[2](0,\infty;U)$.

In particular, if $x$ and $y$
 with the above properties
 exist for every $x_0\in X$, 
then $\norm{x(t)}\to 0$ as $t\to\infty$ and $y,\phi(-y)\in \Lp[2](0,\infty;U)$ for every $x_0\in X$.
\end{proposition}

\begin{proof} 
Let $x_0\in X$ and $y\in\Lp[\infty](0,\infty;U)$
be such 
 that~\eqref{eq:StabThmStateOutput} hold. 
Since $\phi$ is locally Lipschitz continuous, we have $\phi(-y)\in \Lp[\infty](0,\infty;U)$.
Since $\Sigma$ is impedance passive, 
Definition~\ref{def:Imp_Passive} implies that $x$ and $y$ satisfy~\eqref{Inequality_Sigma_Phi_Passive}.
The same arguments as in the proof of \cref{thm:StabSecond} show that $y,\phi(-y)\in \Lp[2](0,\infty;U)$.
We have from~\citel{Sta05book}{Cor.~6.1} 
that $-I$ is an admissible output feedback operator for $\Sigma$, and that the resulting closed-loop system $\Sigma^K = (\T^K,\Phi^K,\Psi^K,\F^K)$ is such that $\norm{\Phi_t^K}\leq M$ for some $M>0$ and for all $t\geq 0$.
Therefore~\citel{TucWei14}{Prop.~5.15} implies that
for all $t\geq 0$
\eq{
\pmat{x(t)\\ \Pt y}
= \pmat{\T_t&\Phi_t\\ \Psi_t&\F_t} \pmat{x_0\\\Pt \phi(-y)}
= \pmat{\T_t^K&\Phi_t^K\\ \Psi_t^K&\F_t^K} \left( \pmat{x_0\\\Pt \phi(-y)} + \pmat{0\\\Pt y}\right)
}
and, in particular, $x(t) = \mathbb{T}^K_t x_0 + \Phi_t^K \Pt(y +\phi(-y))$ for $t\ge 0$.
The semigroup $\T^K$ is strongly stable by assumption and thus $\T^K_tx_0\to 0$ as $t\to\infty$. 
Since 
 $y+\phi(-y)\in\L^2(0,\infty;U)$, Lemma~\ref{Lemma_Conv_InputMap} implies that $\Phi_t^K \Pt(y +\phi(-y))\to 0$ as $t\to\infty$, and thus 
 $\norm{x(t)}\to 0$ as $t\to\infty$.
\end{proof}

\section{Boundary Control Systems with Nonlinear Feedback}
\label{sec:BCS}

In this section we investigate abstract boundary control systems~\cite{Sal87a,MalSta06,JacobZwart} of the form
\begin{subequations}
\label{eq:BCSlin}
\eqn{
\dot{x}(t)&= L x(t)  
\\
G x(t) &=  u(t)\\
y(t) &= K x(t)
}
\end{subequations}
for $t\geq 0$.
If $X$, $U$, and $Y$ are Hilbert spaces, then
the triple $(G,L,K)$ is called a \emph{boundary node} on the spaces $(U,X,Y)$ if
$L: \Dom(L)\subset X\to X$, $G\in \Lin(\Dom(L),U)$,  $K\in \Lin(\Dom(L),Y)$ and the following hold.
\begin{itemize}
\item The restriction $A:=L\vert_{\ker(G)}$ with domain $\Dom(A)=\ker(G)$ generates a strongly continuous semigroup $\T$ 
 on $X$.
\item The operator $G\in \Lin(\Dom(L),U)$ has a bounded right inverse, i.e., there exists $G^r\in\Lin(U,\Dom(L))$ such that $GG^r = I$.
\end{itemize}
This boundary node is \emph{impedance passive} if $Y=U$ and if
\eq{
\re \iprod{L x}{x}_X \leq \re \iprod{G x}{K x}_U , \qquad x\in \Dom(L).
}
Moreover, the \emph{transfer function} $P:\rho(A)\to \Lin(U,Y)$ of $(G,L,K)$ is defined so that $P(\gl)u = K x$, where $x\in \Dom(L)$ is the unique solution of the equations $(\gl-L)x=0$ and $Gx = u$.

It has been shown in~\cite{MalSta06} that every boundary node $(G,L,K)$ gives rise to a system node $S$ on $(U,X,Y)$. More precisely,
\citel{MalSta06}{Thm.~2.3} shows that
the operator 
\eq{
S= \pmat{L\\ K}\pmat{I\\ G}\inv, \qquad \Dom(S)= \ran \left( \pmat{I\\ G} \right)
}
is a system node on $(U,X,Y)$.
We note that the definition of $S$ in particular implies that $(x,u)^\top\in \Dom(S)$ if and only if $x\in \Dom(L)$ and $u=Gx$.
If $(G,L,K)$ is impedance passive, then the associated system node $S$ is impedance passive as well, since for $(x,u)^\top \in \Dom(S)$ we then have
\eq{
\re \iprod{L \pmat{I\\ G}\inv \pmat{x\\u}}{x} 
= 
\re \iprod{L x}{x} 
\leq \re \iprod{K x}{ G x} 
=\re \iprod{K \pmat{I\\ G}\inv \pmat{x\\u}}{u} .
}
In addition, the transfer function of $(G,L,K)$ coincides with the transfer function of the associated system node.

Because of this connection with system nodes, we can call a boundary node $(G,L,K)$ \emph{(externally) well-posed} if its associated system node $S$ is well-posed, that is, if $S$ is a system node of a well-posed linear system $\Sigma$.
In this case we can also say $\Sigma$ is the well-posed system associated with $(G,L,K)$.
If $x_0\in \Dom(L)$ and $u\in C^2(\zinf;U)$ are such that $G x_0=u(0)$, then by~\citel{MalSta06}{Lem.~2.6} there exists $x\in C^1(\zinf;X)\cap C(\zinf;\Dom(L))$ and $y\in C(\zinf;Y)$ such that $x(0)=x_0$ and such that~\eqref{eq:BCSlin} hold for all $t\geq 0$.
The well-posedness of $(G,L,K)$ is equivalent to the property that there exists $\tau,M_\tau>0$ such that all such solutions of~\eqref{eq:BCSlin} satisfy~\citel{JacobZwart}{Ch.~13}
\eq{
\norm{x(\tau)}_X^2 + \int_0^\tau\norm{y(t)}_Y^2\d t \leq M_\tau \left( \norm{x_0}_X^2 + \int_0^\tau \norm{u(t)}_Y^2 \d t \right).
}

\begin{lemma}
    \label{lem:BCSTFproperty}
    Let $(G,L,K)$ be an impedance passive boundary node on the spaces $(U,X,U)$ with $U\neq \{0\}$ and let $P$ be the transfer function of $(G,L,K)$. For every $\gl\in\Cc_+$ there exists $c_\gl>0$ such that $\re P(\gl)\ge c_\gl I$. 
\end{lemma}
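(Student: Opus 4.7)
The plan is to combine the impedance passivity of the boundary node, applied directly to the specific element $x \in \Dom(L)$ arising in the definition of the transfer function, with a lower bound on $\norm{x}_X$ in terms of $\norm{u}_U$ obtained from the graph-norm boundedness of $G$.

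First, fix $\gl \in \Cc_+$ and $u \in U$, and let $x \in \Dom(L)$ be the unique element satisfying $(\gl - L)x = 0$ and $Gx = u$, so that $P(\gl)u = Kx$. Note that $\gl \in \rho(A)$ because impedance passivity implies $A = L|_{\ker(G)}$ is dissipative and hence generates a contraction semigroup. Applying the impedance passivity estimate
\eq{
\re \iprod{Lx}{x}_X \leq \re \iprod{Gx}{Kx}_U
}
and using $Lx = \gl x$, $Gx = u$, $Kx = P(\gl)u$ yields
\eq{
(\re \gl)\norm{x}_X^2 \leq \re \iprod{P(\gl) u}{u}_U.
}

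Next, I will bound $\norm{u}_U$ above by a multiple of $\norm{x}_X$. Since $G \in \Lin(\Dom(L),U)$ with $\Dom(L)$ carrying the graph norm, and since $(\gl - L)x = 0$ forces $\norm{x}_{\Dom(L)}^2 = \norm{x}_X^2 + \norm{Lx}_X^2 = (1+|\gl|^2)\norm{x}_X^2$, we obtain
\eq{
\norm{u}_U = \norm{Gx}_U \leq \norm{G}_{\Lin(\Dom(L),U)} \sqrt{1+|\gl|^2}\,\norm{x}_X.
}
The assumption $U \neq \{0\}$ together with the surjectivity of $G$ (which follows from the existence of the bounded right inverse $G^r$) guarantees that $\norm{G}_{\Lin(\Dom(L),U)} > 0$, so this bound is nondegenerate.

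Combining the two estimates gives
\eq{
\re \iprod{P(\gl)u}{u}_U \geq \frac{\re \gl}{\norm{G}_{\Lin(\Dom(L),U)}^2 \,(1+|\gl|^2)}\,\norm{u}_U^2,
}
so one may set $c_\gl := \re \gl / \bigl(\norm{G}_{\Lin(\Dom(L),U)}^2\,(1+|\gl|^2)\bigr) > 0$, which proves the claim. No real obstacle is anticipated; the only mildly delicate point is remembering that the boundedness of $G$ is with respect to the graph norm on $\Dom(L)$, which forces the factor $1+|\gl|^2$ into the constant $c_\gl$.
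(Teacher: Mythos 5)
Your proof is correct and follows essentially the same strategy as the paper: apply the impedance passivity inequality to the element $x$ solving $(\gl-L)x=0$, $Gx=u$, and bound $\norm{u}$ from above via the graph-norm boundedness of the surjective (hence nonzero) operator $G$. The only difference is that your estimate $\re\iprod{P(\gl)u}{u}\ge \re\gl\,\norm{x}^2$ works directly for all $\gl\in\Cc_+$, so you bypass the paper's last step of first treating real $\gl>0$ and then extending to $\Cc_+$ via the positive-realness result \citel{Log20}{Thm.~4.4}.
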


\begin{proof}
Let $\gl>0$ and $u\in U$. Then $P(\gl)u = Kx$, where $x\in \Dom(L)$ satisfies $Lx=\gl x$ and $Gx=u$.
The identity $x=\gl\inv Lx $ implies that $2 \iprod{Lx}{x}=\gl\norm{x}^2+\gl\inv\norm{Lx}^2 \ge \min\{\gl,\gl\inv\}  \norm{x}_{\Dom(L)}^2$.
Since $U\neq \{0\}$ and since $G$ is surjective, we have $G\neq 0$ and
 $\norm{u}^2 = \norm{Gx}^2\leq \norm{G}_{\Lin(\Dom(L),U)}\norm{x}_{\Dom(L)}^2$.
Combining these estimates and using the impedance passivity of
$(G,L,K)$ shows that
\eq{
\re \iprod{P(\gl)u}{u}
= \re\iprod{Kx}{Gx}
\ge \re \iprod{Lx}{x}
\ge \frac{\min\{\gl,\gl\inv\}}{2\norm{G}_{\Lin(\Dom(L),U)}} \norm{u}^2.
}
Since $u\in U$ was arbitrary,  there exists $c_\gl>0$ such that $\re P(\gl)\ge c_\gl I$. Since $P$ is a positive real function~\citel{Log20}{Thm.~4.4} implies the claim for all $\gl\in\Cc_+$.
\end{proof}

We are interested in the well-posedness and stability of the equation
\begin{subequations}
\label{eq:BCSnonlin}
\eqn{
\dot{x}(t)&= L x(t) 
\\
G x(t) &=  \phi(u(t)-y(t))\\
y(t) &= K x(t)
}
\end{subequations}
for $t\geq 0$,
where $(G,L,K)$ is an impedance passive and well-posed boundary node on $(U,X,U)$ and $\phi: U\to U$.
The following definition of classical and generalised solutions of~\eqref{eq:BCSnonlin} agree with the concepts of classical and generalised solutions of the equation~\eqref{eq:SysNodeSysMain} when the system node in~\eqref{eq:Op_S} is associated with $(G,L,K)$.

\begin{definition}
\label{def:BCSStates}
Let $(G,L,K)$ be a well-posed boundary node on $(U,X,U)$.
A triple $(x,u,y)$ is called a \emph{classical solution} of \eqref{eq:BCSnonlin} on $\zinf$ if
\begin{itemize}
    \item $x\in C^1(\zinf;X)$,
$u\in C(\zinf;U)$, and
 $y\in C(\zinf;U)$;
    \item $x(t)\in \Dom(L)$ for all $t\geq 0$ and~\eqref{eq:BCSnonlin} hold for every $t\geq 0$.
\end{itemize}
A triple $(x,u,y)$ is called a \emph{generalised solution} of~\eqref{eq:BCSnonlin} on $\zinf$ if 
\begin{itemize}
    \item $x\in C(\zinf;X)$, 
    $u\in\Lploc[2](0,\infty;U)$, and $y\in\Lploc[2](0,\infty;U)$;
    \item there exists a sequence $(x^k,u^k,y^k)$ of classical solutions of  \eqref{eq:BCSnonlin} on $\zinf$ such that for every $\tau>0$ we have
$(\Pt[\tau] x^k,\Pt[\tau] u^k,\Pt[\tau] y^k)^\top\to (\Pt[\tau] x,\Pt[\tau] u,\Pt[\tau] y)^\top$ as $k\to \infty$ in $C([0,\tau];X)\times \L^2(0,\tau;U) \times \L^2(0,\tau;U)$.
\end{itemize}
\end{definition}

The following result introduces conditions for the existence of classical and generalised solutions of~\eqref{eq:BCSnonlin}.

\begin{theorem}
\label{thm:BCSwp}
Suppose that $(G,L,K)$ is an impedance passive and well-posed boundary node on $(U,X,U)$ with $U\neq \{0\}$
and suppose that $\phi : U\to U$ satisfies Assumption~\textup{\ref{ass:PhiAss}} for some $\phicoeff>0$.
If $x_0\in X$ and $u\in \Lploc[2](0,\infty;U)$, then~\eqref{eq:BCSnonlin} has a 
 generalised solution $(x,u,y)$ on $\zinf$ satisfying $x(0)=x_0$. 
This solution satisfies
\eq{
x(t) &= \mathbb{T}_t x_{0} +  \Phi_t \Pt[t]\phi(u- y), \qquad t\geq 0,\\
 y &= \Psi_\infty x_{0}  + \mathbb{F}_\infty \phi(u-y),
}
where 
 $\Sigma = (\T,\Phi,\Psi,\F)$ is the well-posed system associated to $(G,L,K)$.
If $x_0\in \Dom(L)$ and $u\in \Hloc[1](0,\infty;U)$ are such that 
$G x_0= \phi(u(0)-K x_0)$, then
 this $(x,u,y)$ is a classical solution of~\eqref{eq:BCSnonlin} on $\zinf$.

If $(x_1,u_1,y_1)$ and $(x_2,u_2,y_2)$ are two generalised solutions of~\eqref{eq:BCSnonlin}, then for all $t\geq 0$ we have
\begin{subequations}
\eqn{
\label{eq:Increm_Estim_BCS}
\MoveEqLeft\norm{x_2(t)-x_1(t)}_X^2 + \phicoeff\int_0^t \norm{\phi(u_2(s)-y_2(s))-\phi(u_1(s)-y_1(s))}_{U}^2\d s\\
&\leq \norm{x_2(0)-x_1(0)}_X^2+\frac{1}{\phicoeff}\int_0^t \norm{u_2(s)-u_1(s)}_{U}^2\d s.
}
\end{subequations}
\end{theorem}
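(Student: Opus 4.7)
The plan is to deduce Theorem~\ref{thm:BCSwp} from Theorem~\ref{thm:LPSolutionsMain} by passing through the system node associated to the boundary node $(G,L,K)$. Concretely, I would first invoke the construction recalled after~\eqref{eq:BCSlin}: the operator
\begin{equation*}
S= \pmat{L\\ K}\pmat{I\\ G}\inv, \qquad \Dom(S)= \ran \pmat{I\\ G},
\end{equation*}
is a system node on $(U,X,U)$. Since $(G,L,K)$ is assumed well-posed, $S$ is a well-posed system node with associated well-posed linear system $\Sigma=(\T,\Phi,\Psi,\F)$, and the paragraph after the definition of $S$ shows that the impedance passivity of $(G,L,K)$ transfers to $S$. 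Moreover, since the transfer functions of $(G,L,K)$ and $S$ coincide and $U\neq\{0\}$, Lemma~\ref{lem:BCSTFproperty} delivers the required strict positivity $\re P(\gl)\geq c_\gl I$ for any chosen $\gl>0$. Thus all hypotheses of Theorem~\ref{thm:LPSolutionsMain} are met.

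Next I would check that the two notions of solution match. By the definition of $S$, a pair $(x,u)^\top$ lies in $\Dom(S)$ if and only if $x\in\Dom(L)$ and $u=Gx$, and on this domain $A\& B\pmatsmall{x\\u}=Lx$ and $C\& D\pmatsmall{x\\u}=Kx$. Plugging $u=\phi(u(t)-y(t))$ into~\eqref{eq:SysNodeSysMain} therefore makes it equivalent to~\eqref{eq:BCSnonlin}, and Definitions~\ref{def:SysNodeStates} and~\ref{def:BCSStates} become identical under this correspondence. The compatibility set $\Dcomp$ becomes
\begin{equation*}
\Dcomp=\bigl\{(x_0,u_0)^\top\in X\times U\,\bigm|\,\exists v\in U\text{ s.t. }x_0\in\Dom(L),\ Gx_0=\phi(u_0-v),\ v=Kx_0\bigr\},
\end{equation*}
so setting $v=Kx_0$ identifies $(x_0,u(0))^\top\in\Dcomp$ with the stated boundary compatibility condition $Gx_0=\phi(u(0)-Kx_0)$.

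With these identifications in place, the existence of classical and generalised solutions, together with the representation formulas via $\T,\Phi,\Psi,\F$ and the incremental estimate~\eqref{eq:Increm_Estim_BCS}, are immediate translations of the corresponding statements in Theorem~\ref{thm:LPSolutionsMain}. The only step that is not a pure translation is the verification that $\Dcomp$ really unfolds to the boundary condition $Gx_0=\phi(u(0)-Kx_0)$; this is the one place where one must unwind the definition of $S$ carefully, but once the identity $C\& D\pmatsmall{x_0\\w}=Kx_0$ for $(x_0,w)\in\Dom(S)$ with $w=Gx_0$ is used, the equivalence is direct. I do not anticipate any genuine obstacles beyond this bookkeeping, since Theorem~\ref{thm:LPSolutionsMain} already encapsulates all of the nonlinear semigroup analysis (via the Lax--Phillips operator $\mathcal A$) needed for the conclusions.
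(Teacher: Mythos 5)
Your proposal is correct and follows essentially the same route as the paper: both reduce Theorem~\ref{thm:BCSwp} to Theorem~\ref{thm:LPSolutionsMain} via the associated system node, use Lemma~\ref{lem:BCSTFproperty} for the transfer-function condition, identify the two notions of solution through $\Dom(S)=\{(x,Gx)^\top : x\in\Dom(L)\}$ and $C\&D=[K,\,0]\vert_{\Dom(S)}$, and unwind $\Dcomp$ to the boundary compatibility condition $Gx_0=\phi(u(0)-Kx_0)$ exactly as you describe.
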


\begin{proof}
Let $S = \pmatsmall{A\& B\\ C\& D}$ be the system node associated with the boundary node $(G,L,K)$.
Definitions~\ref{def:SysNodeStates} and~\ref{def:BCSStates} and the relationship between $(G,L,K)$ and $S$ 
imply that the classical and generalised solutions of~\eqref{eq:SysNodeSysMain} coincide with the 
 classical and generalised solutions, respectively, of~\eqref{eq:BCSnonlin}. 
Our assumptions and Lemma~\ref{lem:BCSTFproperty} imply that $S$ is impedance passive and well-posed, and its transfer function $P$ satisfies $\re P(\gl)\geq c_\gl I$ for some $\gl,c_\gl>0$.
Because of this, Theorem~\ref{thm:LPSolutionsMain} implies that~\eqref{eq:SysNodeSysMain} --- and consequently also~\eqref{eq:BCSnonlin} --- has classical and generalised solutions with the properties stated in the claim. 
In particular, $(x,u)^\top \in X\times U$ satisfies  $(x,u)^\top \in \Dom(S)$ (or equivalently $Ax+Bu\in X$) if and only if $x\in \Dom(L)$ and $Gx=u$ and we have from~\citel{MalSta06}{Thm.~2.3(iv)} that $C\& D = [K,\, 0]\vert_{\Dom(S)}$.
These properties imply that
the condition $(x_0,u(0))^\top\in \Dcomp$ in Theorem~\ref{thm:LPSolutionsMain} holds if and only if $x_0\in \Dom(L)$ and $Gx_0 = \phi(u(0)-Kx_0)$.
Finally, the estimate for the difference of two generalised solutions of~\eqref{eq:BCSnonlin} follows directly from the corresponding estimate for the pairs of solutions of~\eqref{eq:SysNodeSysMain}.
\end{proof}

In the absence of the external input $u(t)$,
 the system~\eqref{eq:BCSnonlin} becomes
\begin{subequations}
\label{eq:BCSnonlinNoInput}
\eqn{
\label{eq:BCSnonlinNoInput_state}
\dot{x}(t)&= L x(t) 
\\
\label{eq:BCSnonlinNoInput_input}
G x(t) &=  \phi(-y(t))\\
y(t) &= K x(t)
\label{eq:BCSnonlinNoInput_out}
}
\end{subequations}
for $t\geq 0$, where $(G,L,K)$ is an impedance passive (but not necessarily well-posed) boundary node on the spaces $(U,X,U)$.
We can use Theorems~\ref{thm:nonlinACPsol} and~\ref{thm:StabSecond} to analyse the well-posedness and stability of this system.
We begin by defining the strong and generalised solutions of~\eqref{eq:BCSnonlinNoInput}.

\begin{definition}
\label{def:BCSNoInputSol}
Let $(G,L,K)$ be a boundary node on the spaces $(U,X,U)$.
\begin{itemize}
\item
The function $x\in \Hloc[1](0,\infty;X)$ is called a \emph{strong solution} of~\eqref{eq:BCSnonlinNoInput} if $x(t)\in \Dom(L)$ and~\eqref{eq:BCSnonlinNoInput_input} holds for all $t\geq 0$, 
 and if~\eqref{eq:BCSnonlinNoInput_state} holds for a.e. $t\geq 0$.
The corresponding \emph{output} $y:\zinf \to U$ is defined by~\eqref{eq:BCSnonlinNoInput_out} for all $t\geq 0$.
\item
The function $x\in C(\zinf;X)$ is called a \emph{generalised solution} of \eqref{eq:BCSnonlinNoInput} on $\zinf$ if 
there exists a sequence
 $(x^k)_{k\in\N}$ 
of strong solutions of~\eqref{eq:BCSnonlinNoInput} on $\zinf$ such that $\norm{\Pt[\tau] x^k-\Pt[\tau] x}_{C([0,\tau];X)}\to 0 $ as $k\to\infty$ for all $\tau>0$.
\end{itemize}
\end{definition}

\begin{theorem}
\label{thm:BCSWPNoInput}
Suppose that $(G,L,K) $ is an impedance passive boundary node
on $(U,X,U)$ with $U\neq \{0\}$,
 that $\ker(G)\cap \ker(K)$ is dense in $X$ 
and  that $\phi:U\to U$  is a continuous monotone function.
Then for every initial state $x_0\in X$ the system~\eqref{eq:BCSnonlinNoInput} has a well-defined generalised solution $x$ on $\zinf$ satisfying $x(0)=x_0$. If $x_0\in \Dom(L) $ and $Gx_0=\phi(-Kx_0)$, then this $x$ is a strong solution of~\eqref{eq:BCSnonlinNoInput} on $\zinf$. The corresponding output $y$
 is right-continuous and $y,\phi(-y)\in \Lp[\infty](0,\infty;U)$.
If $x_1$ and $x_2$ are two generalised solutions of~\eqref{eq:BCSnonlinNoInput} on $\zinf$, then
\eq{
\norm{x_2(t)-x_1(t)}_X \leq \norm{x_2(0)-x_1(0)}_X, \qquad t\geq 0.
}

If $(G,L,K)$ is well-posed and its associated well-posed system is $\Sigma = (\T,\Phi,\Psi,\F)$, then for every $x_0\in \Dom(L) $ satisfying $Gx_0=\phi(-Kx_0)$ the strong solution $x$ and the corresponding output $y$ satisfy
\eq{
x(t) &= \mathbb{T}_t x_{0} +  \Phi_t \Pt[t]\phi(- y), \qquad t\geq 0,\\
 y &= \Psi_\infty x_{0}  + \mathbb{F}_\infty \phi(-y).
}
\end{theorem}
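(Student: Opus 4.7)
The plan is to reduce the statement to \cref{thm:nonlinACPsol} by passing to the system node $S = \pmatsmall{A\&B \\ C\&D}$ associated with $(G,L,K)$ via the construction recalled at the start of \cref{sec:BCS}. Several ingredients are immediate: $S$ is impedance passive since $(G,L,K)$ is; its transfer function coincides with the transfer function $P$ of $(G,L,K)$ and, because $U\neq\{0\}$, \cref{lem:BCSTFproperty} gives $\re P(\gl)\geq c_\gl I$ for some $\gl,c_\gl>0$; and by \citel{MalSta06}{Thm.~2.3(iv)} we have $C\&D = [K,\,0]|_{\Dom(S)}$, so in particular the output operator $C$ of $S$ satisfies $C = K|_{\ker G}$ and $\ker(C) = \ker G\cap\ker K$.

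Using the characterization that $(x,u)^\top\in\Dom(S)$ is equivalent to $x\in\Dom(L)$ together with $u=Gx$, the definition of $\Dom(A_\phi)$ from \cref{sec:WPnoinputs} unwinds as follows: $x\in\Dom(A_\phi)$ if and only if $x\in\Dom(L)$ with $Gx=\phi(-Kx)$, in which case the element $v$ from the definition is forced to be $v=Kx$ and the action of $A_\phi$ reduces to $A_\phi x=Lx$. Two consequences will be crucial. First, $v$ is uniquely determined by $x$, so $A_\phi$ is \emph{automatically} single-valued, sidestepping the need for the strict monotonicity condition $\re\iprod{\phi(u_2)-\phi(u_1)}{u_2-u_1}>0$ when $\phi(u_1)\neq\phi(u_2)$. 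Second, the notions of strong and generalised solution of~\eqref{eq:BCSnonlinNoInput} in \cref{def:BCSNoInputSol} translate directly into the corresponding notions from \cref{def:SysNodeNoInputSolutions}, with the output $y(t)=v(x(t))$ becoming $y(t)=Kx(t)$.

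Next, I will invoke \cref{lem:DomAphiDense}(a) to deduce, from the density of $\ker(C) = \ker G\cap\ker K$ in $X$, that $\Dom(A_\phi)$ is dense in $X$, so $\overline{\Dom(A_\phi)}=X$. At this point all the hypotheses of \cref{thm:nonlinACPsol} are in place, with the strict monotonicity of $\phi$ replaced by single-valuedness of $A_\phi$ as explicitly permitted by \cref{rem:nACPremAphiSingleValued}. Applying that theorem delivers, for every $x_0\in X$, a generalised solution $x\in C(\zinf;X)$ satisfying the contractivity estimate; for every $x_0\in\Dom(L)$ with $Gx_0=\phi(-Kx_0)$, the corresponding strong solution whose output $y=Kx(\cdot)$ is right-continuous with $y,\phi(-y)\in\Lp[\infty](0,\infty;U)$; and, in the well-posed case, the variation-of-constants representations, since then $S$ is a well-posed system node with associated well-posed system $\Sigma$. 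The one point I expect to require the most care is spotting that the boundary node structure forces $v=Kx$ and thereby makes $A_\phi$ single-valued without any assumption beyond continuity and monotonicity of $\phi$; once this is noted, the rest is a direct translation between the boundary-node and system-node formulations.
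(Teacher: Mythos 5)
Your proposal is correct and follows essentially the same route as the paper's proof: pass to the associated system node $S$, use \cref{lem:BCSTFproperty} for the transfer function condition and $C\&D=[K,\,0]|_{\Dom(S)}$ to identify $\ker(C)=\ker(G)\cap\ker(K)$ and to see that $A_\phi x=Lx$ on $\Dom(A_\phi)=\{x\in\Dom(L)\mid Gx=\phi(-Kx)\}$ with $v=Kx$ forced, then invoke \cref{lem:DomAphiDense}(a), \cref{thm:nonlinACPsol}, and \cref{rem:nACPremAphiSingleValued}. The observation you flag as the delicate point --- that the boundary node structure makes $A_\phi$ automatically single-valued, so strict monotonicity of $\phi$ is not needed --- is exactly the observation the paper relies on.
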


\begin{proof}
Let $S = \pmatsmall{A\& B\\ C\& D}$ be the system node associated with the boundary node $(G,L,K)$.
Definitions~\ref{def:SysNodeNoInputSolutions} and~\ref{def:BCSNoInputSol} and the relationship between $(G,L,K)$ and $S$ 
imply that the strong and generalised solutions of~\eqref{eq:nACP} coincide with the 
 strong and generalised solutions, respectively, of~\eqref{eq:BCSnonlinNoInput}. 
Our assumptions and Lemma~\ref{lem:BCSTFproperty} imply that $S$ is impedance passive and that its transfer function $P$ satisfies $\re P(\gl)\geq c_\gl I$ for some $\gl,c_\gl>0$.
The relationship between $(G,L,K)$ and $S$ implies that
 $(x,u)^\top \in X\times U$ satisfies  $(x,u)^\top \in \Dom(S)$ (or equivalently $Ax+Bu\in X$) if and only if $x\in \Dom(L)$ and $Gx=u$, and we have  $C\& D = [K,\, 0]\vert_{\Dom(S)}$ by~\citel{MalSta06}{Thm.~2.3(iv)}.
These properties imply that the operator $A_\phi$ in Theorem~\ref{thm:nonlinACPsol} has the form
\eq{
A_\phi x = Lx, \qquad x\in \Dom(A_\phi) = \setm{x\in \Dom(L)}{Gx=\phi(-Kx)},
}
and $A_\phi$ is in particular single-valued.
The output operator of $S$ satisfies $\ker(C)=\ker(G)\cap \ker(K)$, and therefore Lemma~\ref{lem:DomAphiDense}(a) and our assumption imply that
$\Dom(A_\phi)$ is dense in $X$.
Because of this, Theorem~\ref{thm:nonlinACPsol} and Remark~\ref{rem:nACPremAphiSingleValued} imply that~\eqref{eq:nACP} --- and therefore also~\eqref{eq:BCSnonlinNoInput} --- has strong and generalised solutions with exact the properties stated in the claim, and a generalised solution on $\zinf$ exists for every $x_0\in X$.
Moreover, the estimate for the pairs of generalised solutions follows directly from the corresponding estimate for the solutions of~\eqref{eq:nACP} in \cref{thm:nonlinACPsol}.
\end{proof}

Finally, the following theorem introduces conditions for the global asymptotic stability of the origin for the system~\eqref{eq:BCSnonlinNoInput}.

\begin{theorem}
\label{thm:BCSStabMain}
Suppose that $(G,L,K) $ is an impedance passive boundary node
on $(U,X,U)$ with $U\neq 0$,
 that $\ker(G)\cap \ker(K)$ is dense in $X$,
and that the operator $L\vert_{\ker(G+K)}$ generates a strongly stable semigroup.
Finally, assume
that $\phi:U\to U$  is a locally Lipschitz continuous monotone function, $\phi(0)=0$, and there exist 
$\alpha, \beta, \delta>0$ 
such that
\begin{itemize}
    \item $\re\langle \phi(u),u\rangle\geq \alpha\Vert u\Vert^2$ when $\Vert u\Vert<\delta$;
    \item $\re\langle \phi(u),u\rangle\geq \beta$ when $\Vert u\Vert\geq \delta$.
\end{itemize}
Then the origin is a globally asymptotically stable equilibrium point of~\eqref{eq:BCSnonlinNoInput}, i.e., 
the generalised solutions on $\zinf$ corresponding to all initial states $x(0)=x_0\in X$ exist and satisfy $\norm{x(t)}\to 0$ as $t\to\infty$.
\end{theorem}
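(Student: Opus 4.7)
The plan is to combine Theorem~\ref{thm:BCSWPNoInput} with Theorem~\ref{Theorem_Main_Stabi} and close the argument by a density-plus-contractivity step analogous to the one used in the proof of Theorem~\ref{thm:StabSecond}. Define $\Dom(A_\phi) := \{x \in \Dom(L) : Gx = \phi(-Kx)\}$. The hypotheses of Theorem~\ref{thm:BCSWPNoInput} are in place ($\phi$ is in particular continuous and monotone, $(G,L,K)$ is impedance passive with $U\neq\{0\}$, and $\ker(G)\cap\ker(K)$ is dense in $X$), so for every $x_0 \in X$ the system~\eqref{eq:BCSnonlinNoInput} has a generalised solution $x$ with $x(0)=x_0$, while for $x_0 \in \Dom(A_\phi)$ this is a strong solution whose corresponding output $y$ satisfies the integral identities~\eqref{eq:NLFBStateandOutput}. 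The same density hypothesis, via Lemma~\ref{lem:DomAphiDense}(a) (whose applicability is secured by Lemma~\ref{lem:BCSTFproperty}), also makes $\Dom(A_\phi)$ dense in $X$.

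Next I would invoke Theorem~\ref{Theorem_Main_Stabi} for every strong solution whose initial state lies in $\Dom(A_\phi)$. Its hypotheses match ours almost verbatim: the well-posed linear system $\Sigma$ associated with $(G,L,K)$ is impedance passive, $\phi$ satisfies Assumption~\ref{assum:phi}, and the existence of $x$ and $y$ satisfying~\eqref{eq:StabThmStateOutput} is precisely the content of~\eqref{eq:NLFBStateandOutput} from the previous step. The one point requiring a short identification is the strong stability of the closed-loop semigroup $\T^K$: by the standard feedback computation for boundary nodes, the generator of $\T^K$ is $L|_{\ker(G+K)}$, and this operator generates a strongly stable semigroup by hypothesis. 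Theorem~\ref{Theorem_Main_Stabi} therefore gives $\norm{x(t)} \to 0$ as $t \to \infty$ for every $x_0 \in \Dom(A_\phi)$.

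Finally, I would handle an arbitrary $x_0 \in X$ by approximation. Fix $\eps > 0$ and choose $z_0 \in \Dom(A_\phi)$ with $\norm{x_0 - z_0} < \eps/2$; let $x$ and $x_\eps$ denote the generalised and strong solutions starting at $x_0$ and $z_0$, respectively. The contractivity estimate stated in Theorem~\ref{thm:BCSWPNoInput} yields $\norm{x(t) - x_\eps(t)} \leq \norm{x_0 - z_0} < \eps/2$ for all $t \geq 0$, while $\norm{x_\eps(t)} \to 0$ by the previous paragraph. Combining these bounds gives $\norm{x(t)} < \eps$ for all sufficiently large $t$, and since $\eps > 0$ is arbitrary, $\norm{x(t)} \to 0$. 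The only obstacle worth flagging is the bookkeeping identification of the system-node feedback semigroup $\T^K$ (as defined via Theorem~\ref{Thm_Sta_ImpPass}) with the boundary-node operator $L|_{\ker(G+K)}$; beyond this routine check, the proof is a clean reassembly of Theorems~\ref{Theorem_Main_Stabi} and~\ref{thm:BCSWPNoInput} together with Lemma~\ref{lem:DomAphiDense}(a).
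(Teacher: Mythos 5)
Your proof is correct and follows essentially the same route as the paper: existence via Theorem~\ref{thm:BCSWPNoInput}, identification of the generator of $\T^K$ with $L\vert_{\ker(G+K)}$, density of $\Dom(A_\phi)$, and the stability conclusion. The only difference is that the paper cites Theorem~\ref{thm:StabSecond} as a black box, whereas you unfold its proof (Theorem~\ref{Theorem_Main_Stabi} applied to strong solutions from $\Dom(A_\phi)$, followed by the density-plus-contractivity approximation) --- which, as a side benefit, avoids having to verify the strict monotonicity hypothesis of Theorem~\ref{thm:StabSecond}.
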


\begin{proof}
The existence of generalised solutions corresponding to all initial states $x_0\in X$ follows from \cref{thm:BCSWPNoInput}.
Let $S = \pmatsmall{A\& B\\ C\& D}$ be the system node associated with the boundary node $(G,L,K)$.
The relationship between $(G,L,K)$ and $S$ implies that
 $(x,u)^\top \in X\times U$ satisfies  $(x,u)^\top \in \Dom(S)$ (or equivalently $Ax+Bu\in X$) if and only if $x\in \Dom(L)$ and $Gx=u$, and we have  $C\& D = [K,\, 0]\vert_{\Dom(S)}$ by~\citel{MalSta06}{Thm.~2.3(iv)}.
These properties imply that the operator $A^K$ in Theorem~\ref{thm:StabSecond} is exactly $L\vert_{\ker(G+K)}$.
As shown in the proof of Theorem~\ref{thm:BCSWPNoInput}, the operator $A_\phi$ in Theorems~\ref{thm:nonlinACPsol} and~\ref{thm:StabSecond} is such that $\Dom(A_\phi)$ is dense in $X$, and 
the generalised solutions of~\eqref{eq:BCSnonlinNoInput} coincide with those of~\eqref{eq:nACPStabSec}.
Therefore Theorem~\ref{thm:StabSecond} shows that generalised solutions corresponding to all initial states $x_0\in X$ satisfy $\norm{x(t)}\to 0$ as $t\to\infty$.
\end{proof}

\section{Partial Differential Equation Systems with Nonlinear Feedback}
\label{sec:Examples}

\subsection{Port-Hamiltonian Systems}\label{sec:App_pHs}
In this section we apply our results in the study of \emph{infinite-dimensional port-Hamiltonian systems}~\cite{JacobZwart,JacobMorrisZwart,Aug19}.
This class consists of partial differential equations of the form
\begin{subequations}
\label{eq:PHSmain}
\begin{align}
\frac{\partial x}{\partial t}(\zeta,t) &= P_1\frac{\partial}{\partial \zeta}\left(\mathcal{H}(\zeta)x(\zeta,t)\right) + P_0\left(\mathcal{H}(\zeta)x(\zeta,t)\right)
\label{pH}\\
\MoveEqLeft[7] \left[\begin{matrix} \WBi\\ \WBh\end{matrix}\right]\left[\begin{matrix}
\mathcal{H}(b)x(b,t)\\
\mathcal{H}(a)x(a,t) 
\end{matrix}\right]
=\left[\begin{matrix}I\\ 0\end{matrix}\right]u(t) 
\label{Input_pH}
\\
y(t) &= \WCo \left[\begin{matrix}
\mathcal{H}(b)x(b,t)\\
\mathcal{H}(a)x(a,t) 
\end{matrix}
\right]
\label{Output_pH}
\end{align}
\end{subequations}
for $t\geq 0$ and $\zeta\in [a,b]$, where $x(\zeta,t)\in \Cc^n$.
Here $P_1\in \Cc^{n\times n}$ is a Hermitian matrix, i.e., $P_1^\ast = P_1$, 
and $P_0\in \Cc^{n\times n}$ satisfies $\re P_0\le 0$.
The function $\mathcal{H}(\cdot)\in\L^\infty(a,b;\mathbb{C}^{n\times n})$ is assumed to satisfy $\mathcal{H}(\zeta)^* = \mathcal{H}(\zeta)$ and $ \mathcal{H}(\zeta)\ge cI$ for some $c>0$ and for a.e. $\zeta\in [a,b]$.
The matrices $\tilde W_{B,1}$, $\tilde W_{B,2}$, and $\tilde W_C$ determine the boundary input $u(t)\in \Cc^p$, homogeneous boundary conditions, and boundary output $y(t)\in\Cc^p$ based on the values of $\mc H(\cdot) x(\cdot,t)$ at the endpoints of the spatial interval $[a,b]$.
As shown in~\cite{JacobZwart}, the class of port-Hamiltonian systems~\eqref{eq:PHSmain} especially contains mathematical models of mechanical systems described by, for instance, the one-dimensional wave equation and the Timoshenko beam equation. 
We make the following assumptions on the parameters $\mc H$,  $\tilde W_{B,1}$, $\tilde W_{B,2}$, and $\tilde W_C$.

\begin{assumption}
\label{ass:PHSass}
Denote
\eq{
R_0 = \frac{1}{\sqrt{2}}\pmat{P_1&-P_1\\I&I}, \qquad \Xi = \pmat{0&I\\I&0}, 
\qquad 
 W_B=\pmat{\tilde W_{B,1}\\ \tilde W_{B,2}}R_0\inv,
}
$W_{B,1} = \tilde W_{B,1} R_0\inv$,
$W_{B,2} = \tilde W_{B,2} R_0\inv$,
and $W_C = \tilde W_C R_0\inv$.
Suppose that the following hold.
\begin{itemize}
\setlength{\itemsep}{1ex}
\item[\textup{(a)}]  We have $W_B\Xi W_B^\ast\geq 0$ and $\rank \left(\pmatsmall{W_B\\W_C}\right)=n+\rank (W_C)$.
\item[\textup{(b)}] 
$2\re \iprod{  W_{B,1} g}{W_C g}\ge \iprod{\Xi g}{g}$ for all $g\in \ker(W_{B,2})$.
\item[\textup{(c)}]  $P_1 \mc H(\zeta)= S(\zeta)\inv \Delta(\zeta)S(\zeta)$, $\zeta\in[a,b]$, where $S,\Delta\in C^1([a,b];\Cc^{n\times n})$ are such that $\Delta(\zeta)$ is a diagonal matrix and $S(\zeta)$ is nonsingular for all $\zeta\in[a,b]$.
\end{itemize}
\end{assumption}

As shown in~\citel{JacobZwart}{Ch.~11--13}, the system~\eqref{eq:PHSmain} can be recast as a boundary control system on the Hilbert space $X = \L^2(a,b;\mathbb{C}^n)$ equipped with the inner product $\langle f,g\rangle_X := \langle f,\mathcal{H}g\rangle_{\L^2(a,b;\mathbb{C}^n)}$ (see also~\cite{VilZwa05,PhiRei23arxiv}).
The boundary node $(G,L,K)$ on $(\Cc^p,X,\Cc^p)$ associated to the system~\eqref{eq:PHSmain} is defined by 
\begin{subequations}
\label{eq:PHSBCS}
\begin{align}
    Lx &= P_1\frac{\d}{\d\zeta}(\mathcal{H}x) + P_0(\mathcal{H}x),\\
    D(L) &= \{x\in X\,\bigm|\, \mathcal{H}x\in\H^1(a,b;\mathbb{C}^n),\  \WBh\left[\begin{smallmatrix}(\mathcal{H}x)(b)\\(\mathcal{H}x)(a)\end{smallmatrix}\right] = 0\},\label{D(L)_PHS}\\
    Gx &= \WBi\left[\begin{smallmatrix}(\mathcal{H}x)(b)\\(\mathcal{H}x)(a)\end{smallmatrix}\right], \quad
    Kx = \WCo\left[\begin{smallmatrix}(\mathcal{H}x)(b)\\(\mathcal{H}x)(a)\end{smallmatrix}\right].
\end{align}
\end{subequations}
Assumption~\ref{ass:PHSass}(a) implies that $(G,L,K)$ is a boundary node on $(\Cc^p,X,\Cc^p)$~\citel{JacobZwart}{Thm.~11.3.2 \& Rem.~11.3.3}.
We can check that Assumption~\ref{ass:PHSass}(b) implies that $(G,L,K)$ is impedance passive (cf.~\citel{PauLeg21}{Lem.~V.3}). Indeed, if $x\in \Dom(L)$, then $\mc H x\in \H^1(a,b;\Cc^n)$. If we define $g=R_0 \pmatsmall{(\mc Hx)(b)\\ (\mc Hx)(a)}$, then $W_{B,2}g=0$, $Gx = W_{B,1}g$, and $Kx=W_C g$.
We therefore have from~\citel{JacobZwart}{Lem.~7.2.1 \& (7.26)} and Assumption~\ref{ass:PHSass}(b) that
\eq{
\re \iprod{Lx}{x}_X
&\leq 
\frac{1}{2} \iprod{\Xi g}{g}_{\C^p}
\le   \re \iprod{W_{B,1} g}{W_C g}_{\C^p}
=   \re \iprod{Gx}{Kx}_{\C^p} .
}
Thus $(G,L,K)$ is impedance passive.
Finally, if conditions (a) and (c) in Assumption~\ref{ass:PHSass} hold, then~\citel{JacobZwart}{Thm.~13.2.2} implies  $(G,L,K)$ is well-posed, and in addition that the associated well-posed system $\Sigma$ is uniformly regular in the sense that the limit
 $\lim_{\gl\to\infty, \gl>0}P(\gl)$ exists.

We can now investigate the well-posedness and stability of the 
nonlinear partial differential equation
\begin{subequations}
\label{eq:PHSnonlin}
\begin{align}
\label{eq:PHSnonlin_x}
\frac{\partial x}{\partial t}(\zeta,t) &= P_1\frac{\partial}{\partial \zeta}\left(\mathcal{H}(\zeta)x(\zeta,t)\right) + P_0\left(\mathcal{H}(\zeta)x(\zeta,t)\right),
\\
\label{eq:PHSnonlin_u}
\MoveEqLeft[7] \left[\begin{matrix}\WBi\\ \WBh\end{matrix}\right]\left[\begin{matrix}
\mathcal{H}(b)x(b,t)\\
\mathcal{H}(a)x(a,t) 
\end{matrix}\right]
=\left[\begin{matrix}I\\ 0\end{matrix}\right]
\phi\left(u(t)-\WCo \left[\begin{matrix}
\mathcal{H}(b)x(b,t)\\
\mathcal{H}(a)x(a,t) 
\end{matrix}
\right]\right)
\\
\label{eq:PHSnonlin_y}
y(t) &= \WCo \left[\begin{matrix}
\mathcal{H}(b)x(b,t)\\
\mathcal{H}(a)x(a,t) 
\end{matrix}
\right].
\end{align}
\end{subequations}
The following result is a direct consequence of Theorem~\ref{thm:BCSwp}.

\begin{proposition}\label{prp:PHSwellposedness}
Suppose that Assumption~\textup{\ref{ass:PHSass}} holds and that $\phi:\Cc^p\to \Cc^p$ satisfies Assumption~\textup{\ref{ass:PhiAss}} for some $\phicoeff>0$, and let $(G,L,K)$ be as in~\eqref{eq:PHSBCS}.
If $x_0\in \Dom(L)$ and $u\in \Hloc[1](0,\infty;\Cc^p)$ are such that
\eqn{
\label{eq:PHScompatIC}
\MoveEqLeft[7] \left[\begin{matrix}\WBi\\ \WBh\end{matrix}\right]\left[\begin{matrix}
\mathcal{H}(b)x_0(b)\\
\mathcal{H}(a)x_0(a) 
\end{matrix}\right]
=\left[\begin{matrix}I\\ 0\end{matrix}\right]
\phi\left(u(0)-\WCo \left[\begin{matrix}
\mathcal{H}(b)x_0(b)\\
\mathcal{H}(a)x_0(a) 
\end{matrix}
\right]\right),
}
 then~\eqref{eq:PHSnonlin}  
has a
 \emph{classical solution} $(x,u,y)$ 
satisfying $x(\cdot,0)=x_0$
in the sense that
$x:[a,b]\times \zinf\to \Cc^n$ and $y\in C(\zinf;\Cc^p)$ are such that 
 $t\mapsto x(\cdot,t)\in C^1(\zinf;X) $,  
$x(\cdot,t)\in \Dom(L)$ for all $t\geq 0$, and~\eqref{eq:PHSnonlin} hold for every $t\geq 0$ and for almost every $\zeta\in[a,b]$.

If $x_0\in X$ and $u\in \Lploc[2](0,\infty;\Cc^p)$, then~\eqref{eq:PHSnonlin}
has a \emph{generalised solution} $(x,u,y)$ 
satisfying $x(\cdot,0)=x_0$
in the sense that
 $t\mapsto x(\cdot,t)\in C(\zinf;X) $ and
    $y\in\Lploc[2](0,\infty;U)$
    and
     there exists a sequence $(x^k,u^k,y^k)$ of classical solutions such that for every $\tau>0$ we have
$\sup_{t\in [0,\tau]}\norm{x^k(\cdot,t)-x(\cdot,t)}_{X}\to 0$,
$\norm{\Pt[\tau] u^k-\Pt[\tau]u}_{\Lp[2](0,\tau)}\to 0$,
and $\norm{\Pt[\tau] y^k-\Pt[\tau]y}_{\Lp[2](0,\tau)}\to 0$
as $k\to \infty$.
Each pair $(x_1,u_1,y_1)$ and $(x_2,u_2,y_2)$ of generalised solutions satisfy the estimate
\eq{
\MoveEqLeft\norm{x_2(\cdot,t)-x_1(\cdot,t)}_{\Lp[2](a,b)}^2 + \phicoeff\int_0^t \norm{\phi(u_2(s)-y_2(s))-\phi(u_1(s)-y_1(s))}_{\C^p}^2\d s\\
&\leq \norm{x_2(\cdot,0)-x_1(\cdot,0)}_{\Lp[2](a,b)}^2+\frac{1}{\phicoeff}\int_0^t \norm{u_2(s)-u_1(s)}_{\C^p}^2\d s,
\qquad t\geq 0.
}
\end{proposition}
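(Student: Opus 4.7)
The plan is to reduce the claim directly to Theorem~\ref{thm:BCSwp} applied to the boundary node $(G,L,K)$ in~\eqref{eq:PHSBCS}. The preceding discussion in this section already verifies, under Assumption~\ref{ass:PHSass}, that $(G,L,K)$ is an impedance passive and well-posed boundary node on the spaces $(\C^p, X, \C^p)$ with $X = \L^2(a,b;\C^n)$ equipped with the $\mathcal{H}$-weighted inner product. In particular: Assumption~\ref{ass:PHSass}(a) together with~\citel{JacobZwart}{Thm.~11.3.2} yields that $(G,L,K)$ is a boundary node; Assumption~\ref{ass:PHSass}(b) combined with~\citel{JacobZwart}{Lem.~7.2.1} yields impedance passivity; and Assumption~\ref{ass:PHSass}(a) together with~(c) and~\citel{JacobZwart}{Thm.~13.2.2} yields well-posedness. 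Since $p\ge 1$ gives $\C^p\neq \{0\}$ and $\phi$ satisfies Assumption~\ref{ass:PhiAss} by hypothesis, Theorem~\ref{thm:BCSwp} applies.

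Next I would translate the conclusions of Theorem~\ref{thm:BCSwp} from the abstract boundary node formulation~\eqref{eq:BCSnonlin} into the PDE formulation~\eqref{eq:PHSnonlin}. The key step is to check that the abstract compatibility condition $x_0\in \Dom(L)$ and $Gx_0 = \phi(u(0)-Kx_0)$ from Theorem~\ref{thm:BCSwp} coincides precisely with condition~\eqref{eq:PHScompatIC}. Using the definitions in~\eqref{eq:PHSBCS}, the membership $x_0\in \Dom(L)$ is equivalent to $\mathcal{H}x_0\in \H^1(a,b;\C^n)$ together with $\tilde W_{B,2}[(\mathcal{H}x_0)(b),(\mathcal{H}x_0)(a)]^\top =0$, which is exactly the bottom block of~\eqref{eq:PHScompatIC}, while $Gx_0=\phi(u(0)-Kx_0)$ is exactly its top block. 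Similarly, unpacking $\dot x(t) = Lx(t)$ and $Gx(t) = \phi(u(t)-Kx(t))$ via~\eqref{eq:PHSBCS} yields~\eqref{eq:PHSnonlin_x}--\eqref{eq:PHSnonlin_y} pointwise in $t$ and for a.e.\ $\zeta\in[a,b]$. The generalised solution statement follows term-by-term from Definition~\ref{def:BCSStates} with $x(\zeta,t)$ denoting the pointwise realisation of the $X$-valued function.

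The incremental estimate for pairs of generalised solutions is inherited directly from~\eqref{eq:Increm_Estim_BCS} in Theorem~\ref{thm:BCSwp}. The only subtle point here is that the state space norm in Theorem~\ref{thm:BCSwp} is the $\mathcal{H}$-weighted $X$-norm, whereas the estimate in Proposition~\ref{prp:PHSwellposedness} is written in the standard $\L^2(a,b;\C^n)$ norm; since $cI\le \mathcal{H}(\zeta)\le \|\mathcal{H}\|_{\L^\infty} I$ for a.e.\ $\zeta$, the two norms are equivalent and~\eqref{eq:Increm_Estim_BCS} transfers after absorbing the equivalence constants into $\kappa$ (or, alternatively, the estimate may simply be read in the $X$-norm, as both interpretations are standard in the port-Hamiltonian literature).

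In summary, essentially all of the analytical content has been packaged into Theorem~\ref{thm:BCSwp} and the structural results of~\cite{JacobZwart}; the only genuine work in this proof is the bookkeeping translation between $(G,L,K)$ and the PDE data, with the verification of~\eqref{eq:PHScompatIC} as the central step. I do not expect any real obstacle beyond this translation.
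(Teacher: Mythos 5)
Your proposal is correct and follows essentially the same route as the paper: the paper's proof simply notes that the discussion preceding the proposition already establishes that $(G,L,K)$ in~\eqref{eq:PHSBCS} is an impedance passive well-posed boundary node under Assumption~\ref{ass:PHSass}, and then invokes Theorem~\ref{thm:BCSwp}, with the translation of the compatibility condition into~\eqref{eq:PHScompatIC} being exactly the bookkeeping you describe. Your additional remark on the $\mathcal{H}$-weighted versus unweighted $\L^2$ norm is a reasonable observation the paper glosses over (the estimate is most naturally read in the $X$-norm), but it does not change the argument.
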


\begin{proof}
As explained above, Assumption~\ref{ass:PHSass} implies that $(G,L,K)$ defined in~\eqref{eq:PHSBCS} is well-posed and impedance passive. 
The claims follow from the application of Theorem~\ref{thm:BCSwp} to $(G,L,K)$ in~\eqref{eq:PHSBCS}.
\end{proof}

In the case where $u\equiv 0$ we can use Theorems~\ref{thm:BCSWPNoInput} and~\ref{thm:BCSStabMain} to introduce conditions for the existence and stability of solutions of~\eqref{eq:PHSnonlin}. 
This result does not require condition (c) in Assumption~\ref{ass:PHSass}. The part on the existence of solutions is related to the earlier result on port-Hamiltonian systems with nonlinear feedback in~\citel{Aug19}{Thm.~4.2} and~\citel{Tro14}{Sec.~5.1}.
Sufficient conditions for the stability of the differential equation~\eqref{eq:PHSnonlin} with the linear boundary condition~\eqref{eq:PHSlindamping} have been presented, for instance, in~\cite{VilZwa05,AugJac14,Sch22}.

\begin{proposition}\label{prp:PHSstability}
Suppose that Assumption~\textup{\ref{ass:PHSass}(a)--(b)} hold and
that $\phi:\Cc^p\to \Cc^p$ is a continuous monotone function, and let $(G,L,K)$ be as in~\eqref{eq:PHSBCS}.
If $x_0\in \Dom(L)$ is such that
\eq{
\MoveEqLeft[7] \left[\begin{matrix}\WBi\\ \WBh\end{matrix}\right]\left[\begin{matrix}
\mathcal{H}(b)x_0(b)\\
\mathcal{H}(a)x_0(a) 
\end{matrix}\right]
=\left[\begin{matrix}I\\ 0\end{matrix}\right]
\phi\left(-\WCo \left[\begin{matrix}
\mathcal{H}(b)x_0(b)\\
\mathcal{H}(a)x_0(a) 
\end{matrix}
\right]\right),
}
 then~\eqref{eq:PHSnonlin} with $u\equiv 0$
has a
 \emph{strong solution} $x$
satisfying $x(\cdot,0)=x_0$
in the sense that
 $t\mapsto x(\cdot,t)\in \Hloc[1](0,\infty;X)$, $x(t)\in \Dom(L)$ for all $t\geq 0$,  \eqref{eq:PHSnonlin_u} holds for all $t\geq 0$, and~\eqref{eq:PHSnonlin_x} holds for almost every $t\geq 0$ and almost every $\zeta\in [a,b]$.
The corresponding \emph{output} $y:\zinf \to\Cc^p$ defined by~\eqref{eq:PHSnonlin_y} for $t\geq 0$ is right-continuous and $y,\phi(-y)\in \Lp[\infty](0,\infty;\Cc^p)$.
If $x_0\in X$ and $u\in \Lploc[2](0,\infty;\Cc^p)$, then~\eqref{eq:PHSnonlin}
has a \emph{generalised solution} $x$
satisfying $x(\cdot,0)=x_0$
in the sense that
there exists a 
sequence $(x^k)_{k\in\N}$ of strong solutions of~\eqref{eq:PHSnonlin}  such that 
$\sup_{t\in [0,\tau]}\norm{x^k(\cdot,t)-x(\cdot,t)}_{X}\to 0$ for all $\tau>0$.
Every pair $x_1$ and $x_2$ of generalised solutions of~\eqref{eq:PHSnonlin}  satisfy
\eq{
\norm{x_2(\cdot,t)-x_1(\cdot,t)}_{\Lp[2](a,b)} \leq \norm{x_2(\cdot,0)-x_1(\cdot,0)}_{\Lp[2](a,b)}, \qquad t\geq 0.
}

Assume further
that $\phi$ satisfies Assumption~\textup{\ref{assum:phi}} and
that
the linear partial differential equation~\eqref{eq:PHSnonlin_x} together with the boundary condition 
\eqn{
\label{eq:PHSlindamping}
\MoveEqLeft[7] \left[\begin{matrix}\WBi+\WCo\\ \WBh\end{matrix}\right]\left[\begin{matrix}
\mathcal{H}(b)x_0(b)\\
\mathcal{H}(a)x_0(a) 
\end{matrix}\right]
=0
}
is strongly stable in the sense that all its classical solutions $x$ satisfy $\norm{x(\cdot,t)}_X\to 0$ as $t\to\infty$.
Then the origin is a globally asymptotically stable equilibrium point of~\eqref{eq:BCSnonlinNoInput}, i.e., 
the generalised solutions corresponding to all initial states $x_0\in X$ exist and
satisfy $\norm{x(\cdot,t)}_X\to 0$ as $t\to\infty$.
\end{proposition}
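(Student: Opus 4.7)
The plan is to deduce Proposition~\ref{prp:PHSstability} from Theorems~\ref{thm:BCSWPNoInput} and~\ref{thm:BCSStabMain} applied to the boundary node $(G,L,K)$ on $(\Cc^p,X,\Cc^p)$ defined in~\eqref{eq:PHSBCS}. Under Assumption~\ref{ass:PHSass}(a)--(b) this triple is an impedance passive boundary node, and under~(c) it is additionally well-posed, as already recalled in the text preceding the proposition. Through the identification~\eqref{eq:PHSBCS}, the compatibility condition $Gx_0=\phi(-Kx_0)$ becomes exactly the one stated in the proposition, and the notions of strong and generalised solutions from Definition~\ref{def:BCSNoInputSol} translate verbatim into those claimed for the PDE~\eqref{eq:PHSnonlin}. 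The existence statements, the output regularity $y,\phi(-y)\in \Lp[\infty](0,\infty;\Cc^p)$, and the contractivity inequality will then follow immediately from Theorem~\ref{thm:BCSWPNoInput} once its remaining hypothesis is verified.

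That remaining hypothesis is the density of $\ker(G)\cap\ker(K)$ in $X=\L^2(a,b;\Cc^n)$ (equipped with the weighted inner product $\iprod{\cdot}{\mc H\,\cdot}_{\L^2}$). I would argue this by using that $\mc H\in \L^\infty$ with $\mc H\ge cI$, so that $\mc H^{-1}\in \Lin(X)$ is well defined, and noting that for every $f\in C_c^\infty((a,b);\Cc^n)$ the element $x:=\mc H^{-1}f$ satisfies $\mc Hx=f\in\H^1(a,b;\Cc^n)$ with $f(a)=f(b)=0$, whence $x\in\ker(G)\cap\ker(K)$. Density then follows from the identity $\iprod{y}{\mc H^{-1}f}_X=\iprod{y}{f}_{\L^2}$ together with the density of $C_c^\infty((a,b);\Cc^n)$ in $\L^2(a,b;\Cc^n)$. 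This completes the first half of the proposition.

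For the stability claim I would appeal to Theorem~\ref{thm:BCSStabMain}. The boundary node is well-posed by Assumption~\ref{ass:PHSass}(c), and the hypotheses on $\phi$ are exactly those required. The only non-immediate hypothesis of Theorem~\ref{thm:BCSStabMain} is that $L|_{\ker(G+K)}$ generates a strongly stable semigroup on $X$, and I expect this to be the main (though still mild) obstacle. The operator $L|_{\ker(G+K)}$ is the generator of the semigroup $\T^K$ of the well-posed closed-loop system $\Sigma^K$ obtained from $\Sigma$ by linear negative output feedback $u=-y+\nu$; in particular, $\T^K$ is contractive by Theorem~\ref{Thm_Sta_ImpPass}. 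The assumption of the proposition says that the classical solutions of the linear PDE~\eqref{eq:PHSnonlin_x} with the damping boundary condition~\eqref{eq:PHSlindamping} tend to zero in $X$ as $t\to\infty$, and these are exactly the orbits $t\mapsto \T^K_tx_0$ for $x_0\in \Dom(L|_{\ker(G+K)})$.

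The upgrade from strong convergence on the (dense) domain of the generator to strong convergence on all of $X$ would be handled by a standard $3\eps$-argument exploiting the uniform bound $\norm{\T^K_t}\le 1$: given $x_0\in X$ and $\eps>0$, pick $z_0\in \Dom(L|_{\ker(G+K)})$ with $\norm{x_0-z_0}<\eps/2$, then use the hypothesis to find $t_0$ beyond which $\norm{\T^K_tz_0}<\eps/2$, and combine via contractivity to get $\norm{\T^K_tx_0}<\eps$ for $t\ge t_0$. With the strong stability of $\T^K$ thus established, all hypotheses of Theorem~\ref{thm:BCSStabMain} are satisfied and the global asymptotic stability of the origin for~\eqref{eq:PHSnonlin} with $u\equiv 0$ follows.
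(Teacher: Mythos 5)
Your proposal is correct and follows essentially the same route as the paper's proof: reduce to Theorems~\ref{thm:BCSWPNoInput} and~\ref{thm:BCSStabMain}, verify density of $\ker(G)\cap\ker(K)$ via functions $x$ with $\mc Hx$ smooth and vanishing at the endpoints, and identify the classical solutions of the linearly damped system with the classical orbits of the contraction semigroup generated by $L\vert_{\ker(G+K)}$ to conclude strong stability of $\T^K$. Your explicit $3\eps$-upgrade from the dense domain to all of $X$ is merely a spelled-out version of what the paper leaves implicit.
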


\begin{proof}
Assumption~\ref{ass:PHSass}(a)--(b) imply that $(G,L,K)$ defined in~\eqref{eq:PHSBCS} is an impedance passive boundary node on $(\Cc^p,X,\Cc^p)$.
Since $\ker(G)\cap \ker(K)$ in particular contains all functions $x\in \Lp[2](a,b;\Cc^n)$ for which $\mc Hx$ is a smooth function satisfying $(\mc Hx)(a)=(\mc Hx)(b)=0$, and since $\mc H(\zeta)\geq cI$ for some $c>0$ and for a.e. $\zeta\in [a,b]$, it is easy to verify that $\ker(G)\cap \ker(K)$ is dense in $X$.
The classical solutions of~\eqref{eq:PHSnonlin_x} with the boundary condition~\eqref{eq:PHSlindamping} coincide with the classical orbits of the contraction semigroup  generated by $L\vert_{\ker(G+K)}$, and our assumption therefore implies that this semigroup is strongly stable.
 Because of this, the claims follow directly from Theorems~\ref{thm:BCSWPNoInput} and~\ref{thm:BCSStabMain}.
\end{proof}

\subsection{A Timoshenko beam with saturated boundary damping}
\label{sec:Timoshenko}
We consider the well-posedness and stability of a one-dimensional Timoshenko beam model
\begin{subequations}
\label{Timo}
\eqn{
    &\rho(\zeta)w_{tt}(\zeta,t) = \left(K(\zeta)\left(w_\zeta(\zeta,t) - \varphi(\zeta,t)\right)\right)_\zeta\label{PDE_Timo_w}\\
    &I_\rho(\zeta)\varphi_{tt}(\zeta,t) = \left(EI(\zeta)\varphi_\zeta(\zeta,t)\right)_\zeta + K(\zeta)\left(w_\zeta(\zeta,t)-\varphi(\zeta,t)\right)\label{PDE_Timo_phi}\\
    &w_t(a,t) = 0,\qquad \varphi_t(a,t) = 0\label{Zero_Vel}\\
    &EI(b)\varphi_\zeta(b,t) = \phi_{\mathrm{sat}}\left(u_1(t)-\varphi_t(b,t)\right)\label{NonLin_Damping_phi}\\
    &K(b)\left(w_\zeta(b,t)-\varphi(b,t)\right) = \phi_{\mathrm{sat}}\left(u_2(t)-w_t(b,t)\right)\label{NonLin_Damping_w}
    }
\end{subequations}
for $\zeta\in (a,b)$ and $t\geq 0$, where $w(\zeta,t)$ is the transverse displacement of the beam and $\varphi(\zeta,t)$ is the rotation angle of a filament of the beam. The physical parameters $\rho, I_\rho, EI$ and $K$ are the mass per unit length, the rotary moment of inertia of a cross section of the beam, the product of Young's modulus of elasticity and the moment of inertia of the cross section and the shear modulus, respectively. We assume that these physical parameters are positive and continuously differentiable with respect to $\zeta\in[a,b]$. 
The external input of the system is 
$u(t) = (u_1(t),u_2(t))^\top\in \C^2,\,t\geq 0$.
In \eqref{NonLin_Damping_phi}--\eqref{NonLin_Damping_w}, $\phi_{\mathrm{sat}}:\Cc \to \Cc$ is the scalar saturation function defined by $\phi_{\mathrm{sat}}(u) = u$ when $\vert u\vert < 1$ and $\phi_{\mathrm{sat}}(u) = u/\vert u\vert$ when $\vert u\vert\geq 1$. The boundary conditions
 describe the situation where the beam is clamped at $\zeta= a$ and 
is subject to saturated damping and input at $\zeta=b$.
We define the distance between the two states $(w^1,\varphi^1)$ and $(w^2,\varphi^2)$ of~\eqref{Timo} at time $t\geq 0$ as
\eq{
 D_{(w^2,\varphi^2),(w^1,\varphi^1)}(t) 
&:= \int_0^1 \Bigl[K(\zeta)\left\vert w_\zeta^2(\zeta,t)-w_\zeta^1(\zeta,t) - \varphi^2(\zeta,t)+\varphi^1(\zeta,t)\right\vert^2\\
&\qquad+ \rho(\zeta)\left\vert w_t^2(\zeta,t)-w_t^1(\zeta,t)\right\vert^2
+ EI(\zeta)\left\vert\varphi_\zeta^2(\zeta,t)-\varphi_\zeta^1(\zeta,t)\right\vert^2\\
&\qquad + I_\rho(\zeta)\left\vert\varphi_t^2(\zeta,t)-\varphi_t^1(\zeta,t)\right\vert^2\Bigr]\d\zeta.
}
We define the \emph{energy} of the state $(w,\varphi)$ of~\eqref{Timo} at time $t\geq 0$ by $E_{w,\varphi}(t) := \frac{1}{2}D_{(w,\varphi),(0,0)}(t)$. The concepts of classical and generalised solutions of \eqref{Timo} are given in the following definition.

\begin{definition}\label{def:Sol_Timo}
The triple $(w,\varphi,u)$ is called a \emph{classical solution} of \eqref{Timo} on $\zinf$
if
$u\in C(\zinf;\C^2)$, $\varphi_t(b,\cdot), w_t(b,\cdot)\in C(\zinf)$, 
$t\mapsto w_\zeta(\cdot,t)-\varphi(\cdot,t)$, $t\mapsto w_t(\cdot,t)$,
$t\mapsto\varphi_\zeta(\cdot,t)$,
$t\mapsto \varphi_t(\cdot,t)\in C^1(\zinf;\L^2(a,b))$, and $w_\zeta(\cdot,t)-\varphi(\cdot,t)$, $w_t(\cdot,t)$, $\varphi_\zeta(\cdot,t)$, $\varphi_t(\cdot,t)\in\H^1(a,b)$, $t\ge 0$, and if~\eqref{Timo} hold for all $t\geq 0$ and for a.e. $\zeta\in[a,b]$.

The triple $(w,\varphi,u)$ is called a \emph{generalized solution} of \eqref{Timo} on $\zinf$ if $u\in \Lploc[2](0,\infty;\C^2)$,
$t\mapsto w_\zeta(\cdot,t)-\varphi(\cdot,t)$, $t\mapsto w_t(\cdot,t)$, $t\mapsto\varphi_\zeta(\cdot,t)$, $t\mapsto \varphi_t(\cdot,t)\in C(\zinf;\L^2(a,b))$, and $\varphi_t(b,\cdot), w_t(b,\cdot)\in \Lploc[2](0,\infty)$ are such that there exists a sequence $(w^k,\varphi^k,u^k)$ of classical solutions such that for every $\tau>0$ we have
\begin{align*}
    &\sup_{t\in [0,\tau]} D_{(w^k,\varphi^k),(w,\varphi)}(t)\to 0,\qquad \Vert \mathbf{P}_\tau u^k - \mathbf{P}_\tau u\Vert_{\L^2(0,\tau;\mathbb{C}^2)}\to 0,\\
    &\Vert \mathbf{P}_\tau(\varphi_{t}^k(b,\cdot),w_{t}^k(b,\cdot))^\top - \mathbf{P}_\tau(\varphi_t(b,\cdot),w_t(b,\cdot))^\top\Vert_{\L^2(0,\tau;\mathbb{C}^2)}\to 0
\end{align*}
as $k\to\infty$.
\end{definition}

The following result
establishes the existence of classical and generalised solutions of the nonlinear model~\eqref{Timo} and shows that the origin is a globally asymptotically stable equilibrium point of the system.

\begin{proposition}
If 
$w_0$, $w_1$, $\varphi_0$, and $\varphi_1$ 
and $u=(u_1,u_2)^\top\in \Lploc[2](0,\infty;\C^2)$
are such that $w_0'-\varphi_0, w_1, \varphi_0', \varphi_1\in\Lp[2](a,b)$, 
then~\eqref{Timo}
has a {generalised solution} $(w,\varphi,u)$ on $\zinf$ 
satisfying $w(\cdot,0) = w_0$, $w_t(\cdot,0) = w_1$, $\varphi(\cdot,0) = \varphi_0$ and $\varphi_t(\cdot,0) = \varphi_1$.
If $u\in \Hloc[1](0,\infty;\C^2)$,
$w_0'-\varphi_0, w_1, \varphi_0', \varphi_1\in\H^1(a,b)$ and
\begin{subequations}
\label{eq:Timo_compatconds}
\begin{align}
    &w_1(a) = 0,\qquad \varphi_1(a) = 0,\\
    &EI(b)\varphi_0'(b) = \phi_{\mathrm{sat}}(u_1(0)-\varphi_1(b)),\\
    &K(b)(w_0'(b)-\varphi_0(b)) = \phi_{\mathrm{sat}}(u_2(0)-w_1(b)),
\end{align}
\end{subequations}
then this $(w,\varphi,u)$ is a classical solution of~\eqref{Timo}.
Every pair $(w^1,\varphi^1,u^1)$ and $(w^2,\varphi^2,u^2)$ of generalised solutions satisfies 
\eq{
\MoveEqLeft D_{(w^2,\varphi^2),(w^1,\varphi^1)}(t) + \int_0^t \left\Vert\phi\left(u^2(s)-\left[\begin{smallmatrix}\varphi_{t}^2(b,s)\\w_{t}^2(b,s)\end{smallmatrix}\right]\right)-\phi\left(u^1(s)-\left[\begin{smallmatrix}\varphi_t^1(b,s)\\w_t^1(b,s)\end{smallmatrix}\right]\right)\right\Vert_{\Cc^2}^2\d s\\
&\leq D_{(w^2,\varphi^2),(w^1,\varphi^1)}(0)+\int_0^t \left\Vert u^2(s)-u^1(s)\right\Vert_{\Cc^2}^2\d s,
\qquad t\geq 0.
}
Moreover, if $u\equiv 0$, then the origin of \eqref{Timo} is a globally asymptotically stable equilibrium point, i.e., 
the generalised solutions described above
satisfy
$E_{w,\varphi}(t)\to 0$ as $t\to\infty$.
\end{proposition}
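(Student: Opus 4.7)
The plan is to recast~\eqref{Timo} as a port-Hamiltonian system of the form~\eqref{eq:PHSmain} with the boundary saturation expressed in the feedback form~\eqref{eq:PHSnonlin}, and then invoke Propositions~\ref{prp:PHSwellposedness} and~\ref{prp:PHSstability}. I take as state $x = (\rho w_t,\, w_\zeta - \varphi,\, I_\rho \varphi_t,\, \varphi_\zeta)^\top$ on $X = \Lp[2](a,b;\C^4)$ equipped with the $\mc H$-weighted inner product, where $\mc{H}(\zeta) = \diag(1/\rho(\zeta),\, K(\zeta),\, 1/I_\rho(\zeta),\, EI(\zeta))$. Then $\mc H x = (w_t,\, K(w_\zeta-\varphi),\, \varphi_t,\, EI\varphi_\zeta)^\top$, the energy satisfies $E_{w,\varphi}(t) = \tfrac{1}{2}\|x(\cdot,t)\|_X^2$, and $D_{(w^2,\varphi^2),(w^1,\varphi^1)}(t) = \|x^2(\cdot,t)-x^1(\cdot,t)\|_X^2$. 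Differentiating each component in $t$ and using~\eqref{PDE_Timo_w}--\eqref{PDE_Timo_phi} puts the evolution in the form~\eqref{pH} with $P_1$ the Hermitian block-diagonal matrix built from two copies of $\pmat{0&1\\1&0}$ and $P_0$ the skew-symmetric matrix whose only nonzero entries are $(P_0)_{3,2} = -(P_0)_{2,3} = 1$; in particular $\re P_0 = 0$. The boundary conditions \eqref{Zero_Vel}--\eqref{NonLin_Damping_w} then take the form~\eqref{Input_pH}--\eqref{Output_pH} with $u=(u_1,u_2)^\top\in\C^2$, $\WBi$ extracting $(EI\varphi_\zeta(b),\, K(w_\zeta-\varphi)(b))^\top$, $\WBh$ extracting $(w_t(a),\, \varphi_t(a))^\top$, and $\WCo$ extracting $(\varphi_t(b),\, w_t(b))^\top$ from $(\mc H(b)x(b),\, \mc H(a)x(a))^\top$.

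Next I would verify Assumption~\ref{ass:PHSass}. Part (c) is immediate: $P_1\mc H$ is block-diagonal with blocks $\pmat{0&K\\1/\rho&0}$ and $\pmat{0&EI\\1/I_\rho&0}$, both $C^1$-diagonalisable with real eigenvalues $\pm\sqrt{K/\rho}$ and $\pm\sqrt{EI/I_\rho}$ thanks to the positivity and $C^1$-regularity of the coefficients. Part (a) reduces to an explicit rank computation on the $2\times 8$ boundary matrices above and is routine. For part (b), integration by parts in $\re\iprod{Lx}{x}_X$ together with the homogeneous conditions $(\mc H x)_1(a)=(\mc H x)_3(a)=0$ enforced by $\WBh$ produces the boundary term $\re\bigl(\overline{(\mc H x)_2(b)}\,(\mc H x)_1(b) + \overline{(\mc H x)_4(b)}\,(\mc H x)_3(b)\bigr)$, which coincides with $\re\iprod{Gx}{Kx}_{\C^2}$; the boundary node is therefore impedance energy preserving, and (b) holds in sharp form.

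The componentwise saturation $\phi(u_1,u_2)^\top = (\phi_{\mathrm{sat}}(u_1),\phi_{\mathrm{sat}}(u_2))^\top$ on $\C^2$ satisfies Assumption~\ref{ass:PhiAss} with $\kappa=1$ by Example~\ref{ex:scalarSatComplex}. Proposition~\ref{prp:PHSwellposedness} then delivers classical and generalised solutions together with the incremental estimate. The compatibility condition~\eqref{eq:PHScompatIC} unpacks, via the definition of $\mc H x$ at the endpoints, to exactly~\eqref{eq:Timo_compatconds}, and the incremental estimate in the proposition becomes the one in the statement after identifying $\|x^2(\cdot,t)-x^1(\cdot,t)\|_X^2 = D_{(w^2,\varphi^2),(w^1,\varphi^1)}(t)$ and the outputs with the boundary velocity traces at $\zeta=b$.

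For the stability claim with $u\equiv 0$, $\phi_{\mathrm{sat}}$ satisfies Assumption~\ref{assum:phi} with $U=\C$ by Example~\ref{ex:scalarSatComplex_AssumStab}, hence so does the product $\phi$. The linear closed-loop boundary condition~\eqref{eq:PHSlindamping} corresponds in the original variables to the clamped-free Timoshenko beam with linear boundary velocity feedback $EI(b)\varphi_\zeta(b,t) = -\varphi_t(b,t)$ and $K(b)(w_\zeta(b,t)-\varphi(b,t)) = -w_t(b,t)$, whose strong (and in fact exponential) stability is a classical result. Proposition~\ref{prp:PHSstability} then yields $\|x(\cdot,t)\|_X\to 0$, equivalently $E_{w,\varphi}(t)\to 0$. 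The main obstacle is the explicit bookkeeping required to verify Assumption~\ref{ass:PHSass}(a)--(b) on the explicit boundary matrices and identifying a clean reference for strong stability of the linearly damped Timoshenko beam; both are standard but must be executed with care.
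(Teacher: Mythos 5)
Your proposal is correct and follows essentially the same route as the paper: recast \eqref{Timo} as a port-Hamiltonian boundary control system on $\L^2(a,b;\C^4)$ with the $\mathcal H$-weighted norm, verify Assumption~\ref{ass:PHSass}, note that the componentwise saturation satisfies Assumptions~\ref{ass:PhiAss} and~\ref{assum:phi}, and invoke Propositions~\ref{prp:PHSwellposedness} and~\ref{prp:PHSstability} (the paper cites \citel{Villegas}{Ex.~5.22} for the strong stability of the linearly damped closed loop). The only differences are cosmetic: you order the state components differently, and you establish impedance passivity by a direct integration by parts (obtaining the energy-preserving identity) rather than by checking the inequality of Assumption~\ref{ass:PHSass}(b) on $\ker(W_{B,2})$ as the paper does, which amounts to the same computation.
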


\begin{proof}
We start by showing that \eqref{Timo} can be formulated as an infinite-dimensional port-Hamiltonian system of the form~\eqref{eq:PHSnonlin} which satisfies Assumption \ref{ass:PHSass}. To this end we define 
\begin{align*}
    x_1(\zeta,t) &= w_\zeta(\zeta,t) - \varphi(\zeta,t),~~ x_2(\zeta,t) = \rho(\zeta)w_t(\zeta,t),\\
    x_3(\zeta,t) &= \varphi_\zeta(\zeta,t),~~ x_4(\zeta,t) = I_\rho(\zeta)\varphi_t(\zeta,t)
\end{align*}
and $x=(x_1,x_2,x_3,x_4)^\top$.
Then \eqref{PDE_Timo_w}--\eqref{PDE_Timo_phi} has the form \eqref{pH} on $X = \L^2(a,b;\mathbb{C}^4)$, where 
\begin{align*}
    P_1 &= \left[\begin{smallmatrix}0 & 1 & 0 & 0\\
    1 & 0 & 0 & 0\\
    0 & 0 & 0 & 1\\
    0 & 0 & 1 & 0\end{smallmatrix}\right],\  P_0 = \left[\begin{smallmatrix}0 & 0 & 0 & -1\\
    0 & 0 & 0 & 0\\
    0 & 0 & 0 & 0\\
    1 & 0 & 0 & 0\end{smallmatrix}\right],\ 
    \mathcal{H}(\zeta) = \mathrm{diag}(K(\zeta), \rho(\zeta)^{-1}, EI(\zeta), I_\rho(\zeta)^{-1})^\top.
\end{align*}
We have $P_1 = P_1^*$ and $\re P_0\leq 0$. If we define $\phi:\C^2\to \C^2$ such that $\phi(u) = (\phi_{\mathrm{sat}}(u_1),\phi_{\mathrm{sat}}(u_2))^\top$ for
 $u = ( u_1 , u_2)^\top$ and define $u(t)=(u_1(t),u_2(t))^\top$, $y(t)=(\varphi_t(b,t),w_t(b,t))^\top$, and
\begin{align*}
    \tilde{W}_{B,1} &= \left[\begin{matrix}0 \;\  0 \;\ 1 \;\ 0 \;\ 0 \;\ 0 \;\ 0 \;\ 0\\
    1 \;\ 0 \;\ 0 \;\ 0 \;\ 0 \;\ 0 \;\ 0 \;\ 0\end{matrix}\right], \quad
    \tilde{W}_{B,2} = \left[\begin{matrix}0 \;\ 0 \;\ 0 \;\ 0 \;\ 0 \;\ 1 \;\ 0 \;\ 0\\
    0 \;\ 0 \;\ 0 \;\ 0 \;\ 0 \;\ 0 \;\ 0 \;\ 1\end{matrix}\right],\\
    \tilde{W}_C &= \left[\begin{matrix}0 \;\ 0 \;\ 0 \;\ 1 \;\ 0 \;\ 0 \;\ 0 \;\ 0\\
    0 \;\ 1 \;\ 0 \;\ 0 \;\ 0 \;\ 0 \;\ 0 \;\ 0\end{matrix}\right],
\end{align*}
then \eqref{Zero_Vel}--\eqref{NonLin_Damping_w} are exactly of the form~\eqref{eq:PHSnonlin_u}--\eqref{eq:PHSnonlin_y}.

We will now show that Assumption \ref{ass:PHSass} is satisfied. 
A direct computation shows that 
\begin{align*}
    W_{B,1} &= \frac{1}{\sqrt{2}}\left[\begin{matrix}0 \;\ 0 \;\ 0 \;\ 1 \;\ 0 \;\ 0 \;\ 1 \;\ 0\\ 
    0 \;\ 1 \;\ 0 \;\ 0 \;\ 1 \;\ 0 \;\ 0 \;\ 0\end{matrix}\right],\quad W_{B,2} = \frac{1}{\sqrt{2}}\left[\begin{matrix}-1 \hspace{-1ex}& 0 \hspace{-.5ex}& 0  & \hspace{-1ex} 0 \;\;\, 0 \;\;\, 1 \;\;\, 0 \;\;\, 0\\ 
    \hspace{1ex} 0 & 0 \hspace{-.5ex} & \hspace{-1ex} -1 & \hspace{-1ex}0 \;\;\, 0 \;\;\, 0 \;\;\, 0 \;\;\, 1\end{matrix}\right],\\
    W_C &= \frac{1}{\sqrt{2}}\left[\begin{matrix}0 \;\ 0 \;\ 1 \;\ 0 \;\ 0 \;\ 0 \;\ 0 \;\ 1\\ 1 \;\ 0 \;\ 0 \;\ 0 \;\ 0 \;\ 1 \;\ 0 \;\ 0\end{matrix}\right].
\end{align*}
These matrices satisfy $W_B\Xi W_B^* = 0_{4\times 4}$ and $\rank\left(\left[\begin{smallmatrix}W_B\\ W_C\end{smallmatrix}\right]\right) = 6$ where $n=4$ and $\rank(W_C) = 2$. Thus Assumption \ref{ass:PHSass}\textup{(a)} is satisfied. Now observe that
\begin{equation*}
\ker\left(W_{B,2}\right) = \bigl\{(g_1, \ g_2, \ g_3 ,\ g_4 ,\ g_5 ,\ g_6 ,\ g_7 ,\ g_8)^\top\in\mathbb{C}^8\,\bigm|\,g_1 = g_6,~g_3 = g_8\bigr\}.
\end{equation*}
Moreover, we have $\langle\Xi g,g\rangle = 2\re\left(\overline{g_1}g_5 + \overline{g_2}g_6 + \overline{g_3}g_7 + \overline{g_4}g_8\right)$ and
it is easy to check that if $g\in \ker(W_{B,2})$, then
\begin{align*}
    2\re\langle W_{B,1}g,W_C g\rangle 
    &= \re\left(\overline{g_1}(g_2+g_5) + \overline{g_3}(g_4+g_7) + \overline{g_6}(g_2+g_5)+\overline{g_8}(g_4+g_7)\right)\\
    &=\langle\Xi g,g\rangle.
\end{align*}
Thus Assumption \ref{ass:PHSass}\textup{(b)} is satisfied.
Finally, it is straightforward to check that $P_1\mathcal{H}(\zeta) = S(\zeta)^{-1}\Delta(\zeta) S(\zeta)$ for $\zeta\in[0,1]$ with 
$S\in C^1([0,1];\Cc^4)$ and with 
$\Delta = \diag(\sqrt{EI/I_\rho},-\sqrt{EI/I_\rho}, \sqrt{K/\rho},-\sqrt{K/\rho})$.
Thus Assumption \ref{ass:PHSass}\textup{(c)} is satisfied.

The properties in Examples~\ref{ex:scalarSatComplex} and~\ref{ex:scalarSatComplex_AssumStab} imply that $\phi$ satisfies Assumptions \ref{ass:PhiAss}  with $\phicoeff=1$ and Assumption~\ref{assum:phi} with some $\alpha,\beta,\delta>0$.
Let $w_0$, $w_1$, $\varphi_0$, and $\varphi_1$ 
and $u=(u_1,u_2)^\top\in \Lploc[2](0,\infty;\C^2)$
be such that $w_0'-\varphi_0, w_1, \varphi_0', \varphi_1\in\Lp[2](a,b)$, 
and define $x_0= (w_0'-\varphi_0,\rho(\cdot)w_1,\varphi',I_\rho(\cdot)\varphi_1)^\top$.
Then $x_0\in X$ and \cref{prp:PHSwellposedness} implies that~\eqref{eq:PHSnonlin} has a generalised solution on $\zinf$ satisfying $x(\cdot,0)=x_0$. 
It is straightforward to check that there exist $w:[a,b]\times \zinf\to \C$ and $\varphi:[a,b]\times \zinf\to\C$ such that 
$t\mapsto w(\cdot,t)\in C(\zinf;\H^1(a,b))$, 
$t\mapsto w_t(\cdot,t)\in C(\zinf;\Lp[2](a,b))$, 
$t\mapsto \varphi(\cdot,t)\in C(\zinf;\H^1(a,b))$, 
$t\mapsto \varphi_t(\cdot,t)\in C(\zinf;\Lp[2](a,b))$, 
and
$x(\zeta,t)=(w_\zeta(\zeta,t)-\varphi(\zeta,t),\rho(\zeta)w_t(\zeta,t),\varphi_\zeta(\zeta,t),I_\rho(\zeta)\varphi_t(\zeta,t))^\top$ for all $t\ge 0$ and for a.e. $\zeta\in [a,b]$.
The assumptions that $K, \rho$, $EI, I_\rho\in C^1([a,b])$ and $K^{-1}, \rho^{-1}, EI^{-1}, I_\rho^{-1}\in \L^\infty(a,b)$ imply that
$(w,\varphi,u)$ is a generalised solution of~\eqref{Timo} in the sense of Definition~\ref{def:Sol_Timo}.  In addition, the estimate for the pairs of generalised solutions of~\eqref{eq:PHSnonlin} implies the estimate for the pairs of generalised solutions of~\eqref{Timo} in the claim.
Moreover, if $u\in \Hloc[1](0,\infty;\C^2)$,
$w_0'-\varphi_0, w_1, \varphi_0', \varphi_1\in\H^1(a,b)$ and~\eqref{eq:Timo_compatconds} hold, then  $x_0$ satisfies $\mc H x_0\in H^1(a,b)$, $\WBh x_0=0$, and~\eqref{eq:PHScompatIC} holds. Thus Proposition~\ref{prp:PHSwellposedness} shows that $x$ is a classical solution of~\eqref{eq:PHSnonlin}, and consequently $(w,\varphi,u)$ is a classical solution of~\eqref{Timo} in the sense of Definition~\ref{def:Sol_Timo}.

We have from~\citel{Villegas}{Ex.~5.22} that the semigroup $\T^K$ associated with the partial differential equation \eqref{PDE_Timo_w}--\eqref{PDE_Timo_phi} with the boundary condition of the form \eqref{eq:PHSlindamping} is strongly stable.
Therefore the claim regarding the convergence of generalised solutions directly from Proposition~\ref{prp:PHSstability}. 
The claim about stability follows from Proposition \ref{prp:PHSstability}. 
\end{proof}

\subsection{A two-dimensional heat equation}
\label{sec:HeatEquation}

We consider a heat equation on  a two-dimensional spatial domain $\Omega\subset \R^2$. We assume that $\Omega$ is open, connected, and bounded subset of $\R^2$ and that its boundary $\partial \Omega$ is piecewise $C^2$-smooth (see~\citel{ByrGil02}{Sec.~2} for the precise definition).
Our heat equation with a scalar-valued nonlinear boundary input $u(t)\in\Cc $ and a boundary output $y(t)\in \Cc$
on $\Omega$ has the form
\begin{subequations}
\label{eq:Heat}
\eqn{
\frac{\partial x}{\partial t}(\zeta,t) &= \Delta x (\zeta,t),  &&\zeta\in\Omega,  \ t>0
    \label{eq:Heat_state},\\
    \frac{\partial x}{\partial n}(\zeta,t) &= b(\zeta)\phi\left(u(t)-y(t)\right),  &&\zeta\in \partial\Omega, \ t>0 \label{eq:Heat_input}
\\
y(t)&= \int_{\partial \Omega} b(\zeta)x( \zeta,t)\d \zeta, && t>0
\label{eq:Heat_output}
}
\end{subequations}
for $t>0$,
where 
$\frac{\partial x}{\partial n}(\zeta,t) $
 is the outward normal derivative of $x(\cdot,t)$ at $\zeta\in\doo\Omega$, 
where $b\in\L^2(\doo \Omega;\mathbb{R})$ is the profile function associated with the Neumann boundary input and the boundary output $y(t)$.
We assume that $\int_{\doo \Omega}b(\zeta)\d \zeta\neq 0$.
The following  result based on Theorems~\ref{thm:LPSolutionsMain} and~\ref{thm:StabSecond} establishes the existence of classical and generalised solutions as well as the global asymptotic stability of~\eqref{eq:Heat} in the case where $\phi:\C\to\C$ is a saturation function.

\begin{proposition}
\label{prp:HeatProposition}
Suppose that $b\in\Lp[2](\doo \Omega;\R)$ satisfies  $\int_{\doo \Omega}b(\zeta)\d \zeta\neq 0$ and let $\phi: \Cc\to \Cc$ be the saturation function defined by $\phi(u)=u$ if $\abs{u}\le 1$ and $\phi(u)=u/\abs{u}$ if $\abs{u}>1$.
Let $x_0\in \H^2(\Omega)$ and $u\in \Hloc[1](0,\infty)$ be such that 
\eqn{
\label{eq:HeatCompatBC}
    \frac{\partial x_0}{\partial n} = b(\cdot)\phi\left(u(0)-\int_{\doo\Omega} b(\zeta)x_0( \zeta)\d \zeta
\right) \quad \mbox{on} \ \ \doo \Omega.
}
Then~\eqref{eq:Heat} has a classical state trajectory $x:\Omega\times \zinf\to \Cc$
satisfying $x(\cdot,0)=x_0$
and classical output $y\in C(\zinf)$ 
in the sense that
$t\mapsto x(\cdot,t)\in C^1(\zinf;\Lp[2](\Omega))$, 
$x(\cdot,t)\in \H^2(\Omega)$ for all $t\geq 0$, 
\eqref{eq:Heat_state} holds for all $t>0$ and for a.e. $\zeta\in \Omega$,
\eqref{eq:Heat_input} holds for $t\ge 0$ and for a.e. $\zeta\in \doo\Omega$,
 and~\eqref{eq:Heat_output} holds for all $t\geq0$.

If  $x_0\in \Lp[2](\Omega)$ and $u\in \Lploc[2](0,\infty)$, then~\eqref{eq:Heat} has a generalised state trajectory
$x\in C(\zinf;\Lp[2](\Omega))$ 
satisfying $x(\cdot,0)=x_0$
and a generalised output
$y\in C(\zinf)$ 
in the sense that 
there exist
sequences $(u^k)_{k\in\N}$, $(x^k)_{k\in\N}$ and $(y^k)_{k\in\N}$ 
with the following properties.
\begin{itemize}
\item 
For each $k\in\N$, 
$x^k$ and $y^k$ are the classical state trajectory and output, respectively, of~\eqref{eq:Heat} corresponding to the initial state $x^k(0)$ and input $u^k\in C(\zinf)$
\item For all $\tau>0$ we have $(\Pt[\tau] x^k,\Pt[\tau] u^k,\Pt[\tau] y^k)^\top \to (\Pt[\tau] x,\Pt[\tau] u,\Pt[\tau] y)^\top$ as $t\to \infty$  in $C([0,\tau];\Lp[2](\Omega))\times \Lp[2](0,\tau)\times \Lp[2](0,\tau)$.
\end{itemize}
If $x_1,x_2\in C(\zinf;\Lp[2](\Omega))$ and $y_1,y_2\in C(\zinf)$ are two generalised state trajectories and outputs of~\eqref{eq:Heat} corresponding to initial states $x_{01},x_{02}\in \Lp[2](\Omega)$ and inputs $u_1,u_2\in \Lploc[2](0,\infty)$, then they satisfy the estimate~\eqref{eq:ContractivityEstim} with $X=\Lp[2](\Omega)$ and $\phicoeff =1$  for all $t\geq 0$.

Finally, if $u\equiv 0$, then for every $x_0\in \Lp[2](\Omega)$ the generalised state trajectory $x$ and the generalised output $y$ satisfy $\norm{x(\cdot,t)}_{\Lp[2](\Omega)}\to 0$ as $t\to\infty$ and $y, \phi(-y)\in \Lp[2](0,\infty)$.
\end{proposition}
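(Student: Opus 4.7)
The plan is to recast~\eqref{eq:Heat} as the closed-loop system obtained from applying nonlinear feedback to an impedance passive well-posed linear system on $X=\L^2(\Omega)$, and then invoke Theorems~\ref{thm:LPSolutionsMain} and~\ref{thm:StabSecond}. Define a candidate system node $S=\pmatsmall{A\&B\\C\&D}$ on $(\Cc,\L^2(\Omega),\Cc)$ by
\[
\Dom(S)=\bigl\{(x,v)\in \H^2(\Omega)\times\Cc \;\bigm|\; \tfrac{\partial x}{\partial n}=bv \text{ on }\partial\Omega\bigr\},
\]
$A\&B(x,v)=\Delta x$ and $C\&D(x,v)=\int_{\partial\Omega}b(\zeta)x(\zeta)\d\zeta$. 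Green's identity yields
\[
\re\iprod{\Delta x}{x}_{\L^2(\Omega)} =-\|\nabla x\|^2+\re \bar v \int_{\partial\Omega}bx\,\d\zeta \le \re\iprod{C\&D(x,v)}{v}_{\Cc}
\]
for $(x,v)\in\Dom(S)$, so $S$ is impedance passive. Well-posedness of $S$, which amounts to admissibility of the scalar Neumann input with profile $b\in\Lp[2](\partial\Omega)$ and of the dual boundary observation for the analytic Neumann--Laplacian semigroup, is established for this class of heat equations in~\cite{ByrGil02}. The transfer function has the form $P(\lambda)v=\int_{\partial\Omega}bz\,\d\zeta$ where $(\lambda-\Delta)z=0$ and $\partial z/\partial n=bv$; multiplying by $\bar z$ and integrating gives $\bar v\,P(\lambda)v=\lambda\|z\|_{\L^2(\Omega)}^2+\|\nabla z\|_{\L^2(\Omega)}^2$, and $b\not\equiv 0$ forces $z\neq 0$ when $v\neq 0$, hence $\re P(\lambda)\geq c_\lambda>0$ for every $\lambda>0$.

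By Examples~\ref{ex:scalarSatComplex} and~\ref{ex:scalarSatComplex_AssumStab}, the saturation $\phi$ satisfies Assumption~\ref{ass:PhiAss} with $\kappa=1$ and Assumption~\ref{assum:phi} with $\alpha=\gamma=\delta=1$. Applying Theorem~\ref{thm:LPSolutionsMain} to $S$ and $\phi$ produces generalised solutions of the abstract closed-loop system~\eqref{eq:SysNodeSysMain} for every $x_0\in\Lp[2](\Omega)$ and $u\in\Lploc[2](0,\infty)$, and classical solutions whenever $(x_0,u(0))^\top\in\Dcomp$. The explicit description of $\Dom(S)$ makes the translation between abstract and PDE solutions immediate: the condition $(x_0,u(0))^\top\in\Dcomp$ reduces precisely to $x_0\in\H^2(\Omega)$ satisfying~\eqref{eq:HeatCompatBC}, and the claimed contractivity estimate is~\eqref{eq:ContractivityEstim} with $\kappa=1$.

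For the stability statement with $u\equiv 0$, we invoke Theorem~\ref{thm:StabSecond}. Density of $\Dom(A_\phi)$ in $\Lp[2](\Omega)$ follows from Lemma~\ref{lem:DomAphiDense}(e), because $S$ is well-posed and $\phi$ satisfies Assumption~\ref{ass:PhiAss}. What remains is strong stability of the semigroup $\T^K$ obtained from $\Sigma$ by linear negative output feedback; this $\T^K$ is generated by $\Delta$ with the non-local Robin-type boundary condition $\partial x/\partial n+b(\zeta)\int_{\partial\Omega}b(\zeta')x(\zeta')\d\zeta'=0$. This generator is self-adjoint, dissipative, and has compact resolvent. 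Multiplying a putative harmonic kernel element by its conjugate and applying Green's identity shows that the kernel would force $\nabla x\equiv 0$ and $\int_{\partial\Omega}bx=0$; combined with $\int_{\partial\Omega}b\neq 0$, this yields $x\equiv 0$, so the spectrum is bounded away from zero and $\T^K$ is exponentially stable. Theorem~\ref{thm:StabSecond} then delivers $\norm{x(\cdot,t)}_{\Lp[2](\Omega)}\to 0$, and $y\in\Lp[2](0,\infty)$ follows from Theorem~\ref{Theorem_Main_Stabi}, which underlies the stability argument.

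The principal technical burden is the verification of well-posedness of $S$ and exponential stability of $\T^K$; both hinge on analytic semigroup theory together with standard elliptic trace estimates and are classical in the setting of~\cite{ByrGil02}. Once those inputs are in hand, the rest of the proof is a mechanical application of the abstract Theorems~\ref{thm:LPSolutionsMain} and~\ref{thm:StabSecond}.
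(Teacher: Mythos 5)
Your proposal is correct and follows essentially the same route as the paper's proof: the same system node from~\cite{ByrGil02}, the same Green's-identity verification of impedance passivity and of $\re P(\gl)>0$, the same reduction of the compatibility condition to~\eqref{eq:HeatCompatBC}, and the same stability argument via self-adjointness, compact resolvent, and triviality of the kernel of the feedback generator, feeding into Theorems~\ref{thm:LPSolutionsMain} and~\ref{thm:StabSecond}. The only differences are cosmetic (your transfer-function positivity computation is slightly more explicit than the paper's, and you attribute $y\in\Lp[2](0,\infty)$ to Theorem~\ref{Theorem_Main_Stabi} rather than to Theorem~\ref{thm:StabSecond}).
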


\begin{proof}
The heat equation~\eqref{eq:Heat} can be represented as a system of the form~\eqref{eq:SysNodeSysMain} on the spaces $X=\Lp[2](\Omega)$ and $U=\C$
if we 
define $S$ as the system node associated with the linear heat equation~\eqref{eq:Heat_state} on $\Omega$ with the linear Neumann boundary input 
\eqn{
\label{eq:HeatLinInput}
\frac{\partial x}{\partial n}(\zeta,t)  = b(\zeta)u(t), \qquad \zeta\in \doo\Omega, \ t>0
}
and with the boundary output~\eqref{eq:Heat_output}.
We have from~\citel{ByrGil02}{Sec.~7} that 
this $S$ is defined by 
\eq{
\Dom(S) &= \Bigl\{(x,u)^\top\in \H^1(\Omega)\times \C\,\Bigm|\, \Delta x\in \Lp[2](\Omega),\ \frac{\partial x}{\partial n}=b(\cdot)u \mbox{ on} \ \doo \Omega \Bigr\}\\
S\pmat{x\\ u}
&=
\pmat{
A\& B\\
C\& D 
}
\pmat{x\\ u} = 
\pmat{
\Delta x\\
\displaystyle \int_{\doo \Omega} b(\zeta)x(\zeta)\d \zeta
}
, \qquad \pmat{x\\u}\in \Dom(S).
}
It is shown in~\citel{ByrGil02}{Cor.~1 \& Sec.~7} that $S$ is a well-posed system node on $(\C,X,\C)$ and by~\citel{ByrGil02}{Cor.~2} the associated well-posed system $\Sigma = (\T,\Phi,\Psi,\F)$ is uniformly regular in the sense that its transfer function satisfies $P(\gl)\to 0$ as $\gl\to\infty$ with $\gl>0$.
To check that $S$ is impedance passive, let $(x,u)^\top \in \Dom(S)$. Integration by parts and the assumption that $b$ is real-valued imply that
\eq{
\re \iprod{A\& B \pmatsmall{x\\ u} }{x}_X
&= \re \iprod{\Delta x}{x}_{\Lp[2](\Omega)}
= \re\int_{\doo \Omega} b(\zeta)u \overline{x(\zeta)} \d \zeta - \norm{\nabla x}_{\Lp[2](\Omega)}^2\\
&\le \re \iprod{ C\& D \pmatsmall{x\\u}}{u} .
}
Thus $S$ is impedance passive.
Since $b\neq 0$, it is easy to see that $P(\gl)\in\R$ for $\gl>0$ and $P(\gl)\neq 0$ for some $\gl>0$,
 and thus
 (since $S$ is impedance passive) 
we have $\re P(\gl)\ge c_\gl $ for some $\gl,c_\gl>0$.
Finally, we note that the saturation function $\phi$ satisfies Assumptions~\ref{ass:PhiAss} and~\ref{assum:phi}
 (see Examples~\ref{ex:scalarSatComplex} and~\ref{ex:scalarSatComplex_AssumStab}).
Since $(x_0,u(0))^\top \in \Dcomp$ if and only if $x_0\in \H^2(\Omega)$ and~\eqref{eq:HeatCompatBC} holds,
 the existence of classical and generalised state trajectories and outputs and the estimate~\eqref{eq:ContractivityEstim} follow directly from \cref{thm:LPSolutionsMain}.

To prove the claims regarding the convergence of solutions as $t\to \infty$,
we note that the operator $A^K$ 
in Theorem~\ref{thm:StabSecond} is given by
\eq{
     D(A^K) &=
 \Bigl\{x\in\H^1(\Omega)\,\Bigm|\,  \Delta x\in \Lp[2](\Omega), \
\frac{\partial x}{\partial n}= -b(\cdot)\int_{\doo \Omega} b(\zeta)x(\zeta)\d \zeta \ \ \mbox{on} \ \doo \Omega\Bigr\}\\
    A^Kx &= \Delta x,
    \quad x\in D(A^K) .
}
We have from Theorem~\ref{thm:SysNodeFeedback} and~\citel{Sta05book}{Lem.~7.4.4(iii)} that $(\gl-A^K)\inv = (\gl-A)\inv - (\gl-A)\inv B(I+P(\gl))\inv C(\gl-A)\inv$, where the first term $(\gl-A)\inv$ is compact and the second term is rank one and bounded. Thus
 $A^K$ has compact resolvent. 
Integration by parts shows that for $x\in \Dom(A^K)$ we have
\eqn{
\label{eq:HeatAKiprod}
\langle A^K x, x\rangle_X = &-\left\vert\int_{\doo \Omega} b(\zeta)x(\zeta)\d \zeta\right\vert^2 - \Vert\nabla x\Vert^2_{\L^2(\Omega)}\leq 0,
}
and thus $A^K$ is self-adjoint and $A^K\le 0$. If $A^K x=0$, then~\eqref{eq:HeatAKiprod} shows that $\norm{\nabla x}_{\Lp[2](\Omega)}=0$, which implies that $x$ is constant with respect to $\zeta\in \Omega$. But since $\int_{\doo \Omega} b(\zeta)\d \zeta\neq 0$ by assumption and since~\eqref{eq:HeatAKiprod} implies 
$\int_{\doo \Omega} b(\zeta)x(\zeta)\d \zeta=0$,  we necessarily have $x=0$. Thus $0$ is not an eigenvalue of $A^K$ and, since $A^K$ has compact resolvent, we have $0\in\rho(A)$.
This implies that $A^K$ generates an exponentially stable semigroup on $X$.
Moreover, since $\phi$ satisfies Assumption~\ref{ass:PhiAss}, we have $\re \iprod{\phi(u_2)-\phi(u_1)}{u_2-u_1}> 0$ whenever $\phi(u_1)\neq \phi(u_2)$ and the domain $\Dom(A_\phi)$ of $A_\phi$ in~\eqref{eq:nACPStabSec} is dense in $X$ by Lemma~\ref{lem:DomAphiDense}.
Because of this, 
 Theorem~\ref{thm:StabSecond} implies that 
for any $x_0\in \Lp[2](\Omega)$ and for $u\equiv 0$ the corresponding generalised state trajectory $x$ and generalised output $y$ satisfy $\norm{x(\cdot,t)}_{\Lp[2](\Omega)}\to 0$ as $t\to\infty$ and $y,\phi(-y)\in \Lp[2](0,\infty)$.
\end{proof}

The following result shows that the generalised state trajectory $x$ in \cref{prp:HeatProposition} also has an interpretation as a \emph{weak solution} of the heat equation~\eqref{eq:Heat}.

\begin{proposition}
\label{prp:HeatWeakSol}
For every $x_0\in \Lp[2](\Omega)$ and $u\in \Lploc[2](0,\infty)$
the generalised state trajectory and output in Proposition~\textup{\ref{prp:HeatProposition}} satisfy
$t\mapsto x(\cdot,t)\in \Lploc[2](0,\infty;\H^1(\Omega))$
 and 
\eq{
\iprod{x(\cdot,t)-x_0}{z}_{\Lp[2](\Omega)}  &= \int_0^t \Bigl[
 \phi(u(s)-y(s))\hspace{-.5ex}\int_{\doo\Omega} \hspace{-1ex} b(\zeta)\overline{z(\zeta)} \d \zeta
-\iprod{\nabla x(\cdot,s)}{\nabla z}_{\Lp[2](\Omega;\C^2)} 
\Bigr] \d s
\\
y(t) &= \int_{\doo\Omega} b(\zeta) x(\zeta,t)\d \zeta
}
for all  $z\in \H^1(\Omega)$, where the first identity holds for all $t\geq 0$ and the second one holds for a.e. $t\ge 0$.
\end{proposition}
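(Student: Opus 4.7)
The plan is to first establish the identity and the $\Lploc[2](0,\infty;\H^1(\Omega))$ regularity for classical solutions via Green's formula and an energy estimate, and then to extend to generalised solutions by weak compactness applied to the approximating sequence provided by Proposition~\ref{prp:HeatProposition}.

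First, for a classical solution $(x,u,y)$ arising from $x_0\in \H^2(\Omega)$ and $u\in \Hloc[1](0,\infty)$ satisfying~\eqref{eq:HeatCompatBC}, Green's formula and the boundary condition~\eqref{eq:Heat_input} give, for every $z\in \H^1(\Omega)$ and $t\geq 0$,
\eq{
\Iprod{\Delta x(\cdot,t)}{z}_{\L^2(\Omega)}
= \phi(u(t)-y(t))\hspace{-.5ex}\int_{\doo\Omega} \hspace{-1ex} b(\zeta)\overline{z(\zeta)}\d\zeta - \Iprod{\nabla x(\cdot,t)}{\nabla z}_{\L^2(\Omega;\C^2)}.
}
Substituting $\dot x(\cdot,t)=\Delta x(\cdot,t)$ from~\eqref{eq:Heat_state} and integrating in time yields the claimed weak identity. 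Taking $z=x(\cdot,t)$ above and using $\re\Iprod{\dot x(\cdot,t)}{x(\cdot,t)}=\tfrac12\tfrac{\d}{\d t}\norm{x(\cdot,t)}^2$ together with the fact that $b$ is real-valued so that $\int_{\doo\Omega}b\overline{x(\cdot,t)}\d\zeta=\overline{y(t)}$, then integrating from $0$ to $t$, gives the energy identity
\eq{
\frac{1}{2}\norm{x(\cdot,t)}_{\L^2(\Omega)}^2 + \int_0^t\norm{\nabla x(\cdot,s)}_{\L^2(\Omega;\C^2)}^2\d s
= \frac{1}{2}\norm{x_0}_{\L^2(\Omega)}^2 + \re\int_0^t\phi(u(s)-y(s))\overline{y(s)}\d s.
}
Since $\abs{\phi(\cdot)}\leq 1$ and $y\in C(\zinf)$, the right-hand side is finite for every $t\geq 0$, so in particular $\nabla x\in \Lploc[2](0,\infty;\L^2(\Omega;\C^2))$.

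Next, for general $x_0\in\L^2(\Omega)$ and $u\in\Lploc[2](0,\infty)$, let $(x^k,u^k,y^k)_k$ be the approximating sequence of classical solutions from Proposition~\ref{prp:HeatProposition}, and fix $T>0$. Applying the energy identity above to each $(x^k,u^k,y^k)$ and using $\abs{\phi(\cdot)}\leq 1$ together with Cauchy--Schwarz gives
\eq{
\int_0^T\norm{\nabla x^k(\cdot,s)}_{\L^2(\Omega;\C^2)}^2\d s
\leq \frac{1}{2}\norm{x^k(\cdot,0)}_{\L^2(\Omega)}^2 + \sqrt{T}\,\norm{y^k}_{\L^2(0,T)}.
}
The convergences $x^k(\cdot,0)\to x_0$ in $\L^2(\Omega)$ and $y^k\to y$ in $\L^2(0,T)$ bound the right-hand side uniformly in $k$, so $(\nabla x^k)_k$ is bounded in $\L^2(0,T;\L^2(\Omega;\C^2))$. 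A weakly convergent subsequence has limit $g$, and testing against $\varphi\in C_c^\infty((0,T)\times\Omega;\C^2)$ combined with $x^k\to x$ in $C([0,T];\L^2(\Omega))$ identifies $g=\nabla x$ distributionally. Hence $x\in\L^2(0,T;\H^1(\Omega))$, the full sequence converges weakly, and since $T>0$ is arbitrary, $x\in \Lploc[2](0,\infty;\H^1(\Omega))$.

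Finally, passing to the limit in the weak identity for $(x^k,u^k,y^k)$: the left-hand side converges to $\Iprod{x(\cdot,t)-x_0}{z}_{\L^2(\Omega)}$ uniformly in $t\in[0,T]$ since $x^k\to x$ in $C([0,T];\L^2(\Omega))$; the global Lipschitz continuity of $\phi$ together with $u^k\to u$ and $y^k\to y$ in $\L^2(0,T)$ yields $\phi(u^k-y^k)\to\phi(u-y)$ in $\L^2(0,T)$, giving convergence of the $\phi$-term; and for each fixed $t\in[0,T]$, testing the weak convergence $\nabla x^k\rightharpoonup \nabla x$ in $\L^2(0,T;\L^2(\Omega;\C^2))$ against $\chi_{[0,t]}\nabla z$ gives convergence of $\int_0^t\Iprod{\nabla x^k(\cdot,s)}{\nabla z}\d s$. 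The identity for $y$ follows from continuity of the trace $\H^1(\Omega)\to\L^2(\doo\Omega)$, the weak convergence of $x^k$ in $\L^2(0,T;\H^1(\Omega))$, and $y^k\to y$ in $\L^2(0,T)$, via $y^k(t)=\int_{\doo\Omega} b\,\gamma_0 x^k(\cdot,t)\d\zeta$. The main technical point is obtaining the uniform gradient bound in the second step; once that is in place, the weak compactness and identification of $\nabla x$, together with the passage to the limit, are routine.
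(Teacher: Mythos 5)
Your proof is correct, but it takes a genuinely different route from the paper's in two respects. First, for the weak identity the paper never uses Green's formula on classical solutions: it applies the abstract mild-solution identity \citel{TucWei09book}{Rem.~4.2.6} to~\eqref{eq:LPSemigState}, which gives
$\iprod{x(\cdot,t)-x_0}{z}=\int_0^t\bigl[\iprod{x(\cdot,s)}{\Delta z}+\phi(u(s)-y(s))\int_{\doo\Omega}b\overline{z}\,\d\zeta\bigr]\d s$
directly for the generalised solution, but only for $z\in\Dom(A)$ (Neumann trace zero); the passage to all $z\in\H^1(\Omega)$ is done at the very end via $\iprod{x}{\Delta z}=-\iprod{\nabla x}{\nabla z}$ and density of $\Dom(A)$ in $\H^1(\Omega)$. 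You instead prove the identity for classical solutions for all $z\in\H^1(\Omega)$ by Green's formula and pass to the limit along the approximating sequence, which is equally valid. Second, for the $\H^1$ regularity the paper runs the energy computation on \emph{differences} $x^k-x^n$ and uses Assumption~\ref{ass:PhiAss} together with Lemma~\ref{lem:PhiProps}(b) to show that $(\Pt[t] x^k)_k$ is a Cauchy sequence in $\Lp[2](0,t;\H^1(\Omega))$, i.e.\ it gets \emph{strong} convergence of the gradients; you use the example-specific bound $\abs{\phi}\le 1$ to obtain only a uniform estimate and then weak compactness plus distributional identification of the limit. The paper's strong convergence makes the output identity $y(t)=\int_{\doo\Omega}b(\zeta)x(\zeta,t)\d\zeta$ immediate from the trace theorem, whereas you need the additional (correct) observation that the bounded trace functional is weakly continuous on $\Lp[2](0,T;\H^1(\Omega))$; in exchange, your energy identity is more elementary and does not rely on the incremental passivity machinery. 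Both arguments are sound; note only that the paper's Cauchy-sequence device generalises to any $\phi$ satisfying Assumption~\ref{ass:PhiAss}, while your uniform bound as written leans on the boundedness of the saturation.
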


\begin{proof}
Let $X=\Lp[2](\Omega)$ and let $S$ be as in the proof of \cref{prp:HeatProposition} and
let $x$ and $y$ be the generalised state trajectory and output of~\eqref{eq:Heat} in Proposition~\ref{prp:HeatProposition}.
We have from the proof of \cref{prp:HeatProposition} and \cref{thm:LPSolutionsMain} that  $x$ and $y$ satisfy~\eqref{eq:LPSemigState}.
Therefore~\citel{TucWei09book}{Rem.~4.2.6} implies that 
\eqn{
\label{eq:HeatWeakSolFirst}
\MoveEqLeft[4]\iprod{x(\cdot,t)-x_0}{z}_X
=\int_0^t \Bigl[\iprod{x(\cdot,s)}{\Delta z}_X + \phi(u(s)-y(s))\int_{\doo\Omega} b(\zeta) \overline{z(\zeta)}\d \zeta \Bigr] \d s
}
 for all $z\in \Dom(A)=\setm{x\in \H^2(\Omega)}{\frac{\partial x}{\partial n}=0 \mbox{ on } \partial \Omega}$ and $t\geq 0$.
Let $(x^k)_{k\in\N}$, $(u^k)_{k\in\N}$ and $(y^k)_{k\in\N}$ be sequences with the properties described in \cref{prp:HeatProposition}. 
Denote $x^k(t):=x^k(\cdot,t)$ and $x(t):=x(\cdot,t)$ for $t\geq 0$ and $k\in\N$ for brevity.
Integration by parts shows that
for $k,n\in\N$
the classical state trajectories $x^k$ and $x^n$  of~\eqref{eq:Heat}  satisfy
\eq{
\frac{\d }{\d s}
\MoveEqLeft\norm{x^k(s)-x^n(s)}_X^2
= 2\re \iprod{\Delta (x^k(s)- x^n(s))}{x^k(s)-x^n(s)}_X\\
&= 2\re \iprod{\phi(u^k(s)-y^k(s))-\phi(u^n(s)-y^n(s))}{ y^k(s)-y^n(s)}_{\C}\\
 &\quad -2 \norm{\nabla (x^k(s)- x^n(s))}_X^2.
}
Integrating with respect to $s$ from $0$ to $t$ and using Assumption~\ref{ass:PhiAss}  and Lemma~\ref{lem:PhiProps}(b) implies that

\eq{
 2\int_0^t\norm{\nabla x^k(s)-\nabla x^n(s)}_X^2 \d s
\leq 
\norm{x^k(0)-x^n(0)}_X^2+  \norm{\Pt u^k-\Pt u^n}_{\Lp[2](0,t)} \to 0
}
as $k,n\to \infty$. 
Since we also have
$\norm{\Pt x^k(\cdot)-\Pt x^n(\cdot)}_{\Lp[2](0,t;X)}\to 0$
 as $k,n\to \infty$, we conclude that $( \Pt x^k(\cdot))_{k\in\N}$ is  a Cauchy sequence in $\Lp[2](0,t;\H^1(\Omega))$. 
 Since we have $\sup_{s\in[0,t]}\norm{x^k(s)-x(s)}\to 0$ as $k\to\infty$, 
we can deduce that $\Pt x \in \Lp[2](0,t;\H^1(\Omega))$.
Since $t>0$ was arbitrary, we have $x\in \Lploc[2](0,\infty;\H^1(\Omega))$ as claimed.
Since $y^k(t)=\int_{\doo\Omega} b(\zeta)x^k(\zeta,t)\d \zeta$ for all $t\ge 0$ and since $\Pt x^k(\cdot)\to \Pt x(\cdot)$ in $\Lp[2](0,t;H^1(\Omega))$ as $k\to \infty$, it is easy to show that
$y(t)=\int_{\doo \Omega} b(\zeta)x(\zeta,t)\d \zeta$ for a.e. $t\ge 0$.

Finally, for $z\in \Dom(A)$ and for a.e. $s\ge 0$ we have  $\iprod{x(\cdot,s)}{\Delta z}_X = -\iprod{\nabla x(\cdot,s)}{\nabla z}_X$ by integration by parts. Since $\Dom(A)$ is dense in $\H^1(\Omega)$, the first identity in the claim follows from~\eqref{eq:HeatWeakSolFirst}.
\end{proof}

\end{document}